\newcommand{\R}{\mathbb{R}}
\newcommand{\N}{\mathbb{N}}
\newcommand{\C}{\mathbb{C}}
\newcommand{\E}{\mathbb{E}}
\newcommand{\PP}{\mathbb{P}}
\newcommand{\dist}{\operatorname{dist}}
\newcommand{\diam}{\operatorname{diam}}
\newtheorem{thm}{Theorem}[section]
\newtheorem{teo}{Theorem}[section]
\newtheorem{cor}[thm]{Corollary}
\newtheorem{lem}[thm]{Lemma}
\newtheorem{lema}[thm]{Lemma}
\newtheorem{prop}[thm]{Proposition}
\newtheorem{ass}{Assumptions}
\newtheorem{defn}[thm]{Definition}
\theoremstyle{remark}
\newtheorem{rem}{Remark}[section]
\newcommand{\be}{\begin{equation}}
\newcommand{\ee}{\end{equation}}
\newcommand{\bp}{\begin{proof}}
\newcommand{\ep}{\end{proof}}
\newcommand{\bel}{\begin{equation}\label}
\newcommand{\eeq}{\end{equation}}
\newcommand{\bea}{\begin{eqnarray}}
\newcommand{\eea}{\end{eqnarray}}
\newcommand{\bee}{\begin{eqnarray*}}
\newcommand{\eee}{\end{eqnarray*}}
\newcommand{\ben}{\begin{enumerate}}
\newcommand{\een}{\end{enumerate}}
\newcommand{\vertiii}[1]{{\left\vert\kern-0.25ex\left\vert\kern-0.25ex\left\vert #1 
    \right\vert\kern-0.25ex\right\vert\kern-0.25ex\right\vert}}
\providecommand{\norm}[1]{\left\| #1 \right\|}
\date{}
\title{A new approach for the fractional Laplacian via deep neural networks}
\author{Nicol\'as Valenzuela}
\address{Departamento de Ingenier\'ia Matem\'atica DIM, and CMM UMI 2807-CNRS, Universidad de Chile, Beauchef 851 Torre Norte Piso 5, Santiago Chile}
\email{nvalenzuela@dim.uchile.cl}
\thanks{N.V. is partially supported by Fondecyt no. 1191412 and CMM Projects ``Apoyo a Centros de Excelencia'' ACE210010 and Fondo Basal FB210005}
\date{\today}
\subjclass[2000]{Primary: 35R11, Secondary: 62M45, 68T07}
\keywords{Deep Neural Networks, Fractional Laplacian, Approximation}
\numberwithin{equation}{section}
\begin{document}

\maketitle \markboth{Nicol\'as Valenzuela}{A new approach for the fractional Laplacian via deep neural networks}

\begin{abstract}

\noindent
The fractional Laplacian has been strongly studied during past decades, see e.g. \cite{CS}. In this paper we present a different approach for the associated Dirichlet problem, using recent deep learning techniques. In fact, intensively PDEs with a stochastic representation have been understood via neural networks, overcoming the so-called \emph{curse of dimensionality}. Among these equations one can find parabolic ones in $\mathbb{R}^d$, see \cite{Hutz}, and elliptic in a bounded domain $D \subset \mathbb{R}^d$, see \cite{Grohs}. 

\medskip

In this paper we consider the Dirichlet problem for the fractional Laplacian with exponent $\alpha \in (1,2)$. We show that its solution, represented in a stochastic fashion by \cite{AK1}, can be approximated using deep neural networks. We also check that this approximation does not suffer from the curse of dimensionality. The spirit of our proof follows the ideas in \cite{Grohs}, with important variations due to the nonlocal nature of the fractional Laplacian; in particular, the stochastic representation is given by an $\alpha$-stable isotropic L\'evy process, and not given by a standard Brownian motion.
\end{abstract}

\tableofcontents

\section{Introduction}\label{Sect:1}

\subsection{Motivation} Deep neural networks (DNNs) have become recent key actors in the study of partial differential equations (PDEs) \cite{WE1,WE2,Hutz}. Among them, deep learning based algorithms have provided new insights for the approximation of certain PDEs, see e.g. \cite{Beck1,Beck2,Grohs2,WE1,WE2}. The corresponding numerical simulations suggest that DNNs overcome the so-called \emph{curse of dimensionality}, in the sense that the number of real parameters that describe the DNN is bounded by a polynomial on the dimension $d$, and on the reciprocal of the accuracy of the approximation. Even better, recent works have theoretically proved that certain PDEs can be approximated by DNNs, overcoming the curse of dimensionality, see e.g. \cite{Berner1,Gonnon,Grohs,Grohs3,Hutz,Jentz1}.

\medskip

In order to describe the previous results in more detail, we start by considering the classical Dirichlet boundary value Problem in $d$-dimensions over a bounded, convex domain $D \subset \R^d$:
\[
\left\{\begin{aligned}
	-\Delta u(x) &= f(x) \quad x \in D,\\
	u(x) &= g(x) \quad x \in \partial D,
\end{aligned}\right.
\] 
where $f,g$ are suitable continuous functions. In a recent work, Grohs and Herrmann \cite{Grohs} proved that DNNs overcome the curse of dimensionality in the approximation of solution of the above problem. More precisely, they used stochastic techniques such as the Feynman-Kac formula, the so-called \emph{Walk-on-Spheres} (WoS) processes (defined in Section \ref{Sect:4}) and Monte Carlo simulations in order to show that DNNs approximate the exact solution, with arbitrary precision. 

\medskip

The main purpose of this paper is to extend the nice results obtained by Grohs and Herrman in the case of the fractional Laplacian $(-\Delta)^{\alpha/2}$, with $\alpha \in (0,2)$, formally defined in $\R^d$ as
\begin{equation}\label{FractLap}
	-(-\Delta)^{\alpha/2} u(x) = c_{d,\alpha} \lim_{\varepsilon \downarrow 0} \int_{\R^d \setminus B(0,\varepsilon)} \frac{u(y)-u(x)}{|y-x|^{d+\alpha}}dy, \hspace{.5cm} x \in \R^d,
\end{equation}
where $c_{d,\alpha}=- \frac{2^\alpha \Gamma((d+\alpha)/2)}{\pi^{d/2}\Gamma(-\alpha/2)}$ and $\Gamma(\cdot)$ is the classical Gamma function. We also prove that DNNs overcome the curse of dimensionality in this general setting, a hard problem, specially because of the nonlocal character of the problem. However, some recent findings are key to fully describe the problem here. Indeed, Kyprianou et al. \cite{AK1} showed that the Feynman-Kac formula and the WoS processes are also valid in the nonlocal case. We will deeply rely on these results to reproduce the Grohs and Herrmann program.

\subsection{Setting} Let $\alpha \in (0,2)$, $d \in \N$ and $D \subset \R^d$ a bounded domain. Consider the following Dirichlet boundary value problem
\begin{equation}\label{eq:1.1}
	\left\{ \begin{array}{rll}
		(-\Delta)^{\alpha/2}u(x)=  f(x) &  x \in D,\\
		u(x)= g(x) & x \in D^c.
	\end{array} \right.
\end{equation}
Here, $f,g$ are functions that satisfy suitable assumptions. More precisely, we ask for the following: 
\begin{itemize}
	\item $g:D^c \to \R$ is a $L_g$-Lipschitz continuous function in $L^{1}_{\alpha}(D^c)$, $L_g >0$, that is to say
	\begin{equation}\label{Hg0}
		\int_{D^c} \frac{|g(x)|}{1+|x|^{d+\alpha}}dx < \infty. \tag{Hg-0}
	\end{equation}
	\item $f:D \to \R$ is a $L_f$-Lipschitz continuous function, $L_f >0$, such that
	\begin{equation}\label{Hf0}
		f \in C^{\alpha + \varepsilon_0}(\overline{D}) \qquad \hbox{for some fixed} \qquad \varepsilon_0>0. \tag{Hf-0}
	\end{equation}
\end{itemize}
These assumptions are standard in the literature (see e.g. \cite{AK1}), and are required to give a rigorous sense to the {\bf continuous solution} in $L^1_\alpha(\R^d)$ of \eqref{eq:1.1} in terms of the stochastic representation
\begin{equation}\label{u(x)}
	u(x) = \E_x \left[ g(X_{\sigma_D}) \right] + \E_x \left[ \int_0^{\sigma_D} f(X_s) ds \right],
\end{equation}
where $(X_t)_{t\geq 0}$ is an $\alpha$-stable isotropic L\'evy process and $\sigma_D$ is the exit time of $D$ for this process. See Theorem \ref{teo:sol} below for full details.

\medskip

Problem \eqref{eq:1.1} has attracted considerable interest in past decades. Starting from the foundational work by Caffarelli and Silvestre \cite{CS}, the study of fractional problems has always required a great amount of detail and very technical mathematics. The reader can consult the monographs by \cite{Acosta,Bonito,Gulian,AK1,Lischke1}. The work of Kyprianou et al. \cite{AK1} proved that the solution of Problem \eqref{eq:1.1} can be represented with the WoS processes described formally in Section \ref{Sect:4}, namely
\begin{equation}\label{u(x)2}
	u(x) = \E_{x} \left[ g(\rho_N) \right] + \E_{x}\left[\sum_{n=1}^{N} r_n^{\alpha} V_1(0,f(\rho_{n-1} +r_n\cdot))\right],
\end{equation}
where $\left(\rho_{n}\right)_{n=0}^{N}$ is the WoS process starting at $\rho_0 = x \in D$, $N=\min \{n\in \N: \rho_n \notin D\}$ and $r_n = \dist(\rho_{n-1},\partial D)$. $V_1(0,1(\cdot))$ is defined in Section \ref{Sect:5} and represents the expected occupation of the stable process exiting the unit ball centered at the origin.

\medskip

In this paper we propose a new approach to the Problem \eqref{eq:1.1} in terms of some deep learning techniques. We will work with the DNNs described formally in the Section \ref{Sect:3}. In particular, we work with DNNs with $H \in \N$ hidden layers, one input and one output layer, each layer with dimension $k_{i} \in \N$, $i=0,...,H+1$. For $i=1,...,H+1$ the weights and biases of the DNN are denoted as $W_i \in \R^{k_i \times k_{i-1}}$ and $B_i \in \R^{k_i}$. The DNN is represented by his weights and biases as $\Phi = ((W_1,B_1),...,(W_{H+1},B_{H+1}))$. For $x_0 \in \R^{k_0}$ the realization of the DNN $\Phi$ is defined as
\[\mathcal{R}(\Phi)(x_0) = W_{H+1}x_{H} + B_{H+1},\]
where $x_i \in \R^{k_i}$, $i=1,...,H$ is defined as
\[
x_i = A(W_{i}x_{i-1}+B_{i}).
\]
Here $A(\cdot)$ is the activation function of the DNN. In our case we work with activation function of ReLu type ($A(x)=\max\{x,0\}$). The number of parameters used to describe the DNN and the dimension of the layers are defined as
\[
\mathcal{P}(\Phi) = \sum_{i=1}^{H+1} k_i (k_{i-1}+1), \quad \mathcal{D}(\Phi) = (k_0,...,k_{H+1}),
\]
respectively. For the approximation of \eqref{u(x)}, we need to assume some hypothesis in relation to the implicated functions. In particular, we suppose that $g$, $\dist(\cdot,\partial D)$, $(\cdot)^{\alpha}$ and $f$ can be approximated by ReLu DNNs $\Phi_g$, $\Phi_{\dist}$, $\Phi_{\alpha}$ and $\Phi_{f}$ that overcome the curse of dimensionality. The full details of the hypotheses can be found in Assumptions \ref{Sup:g}, \ref{Sup:D} and \ref{Sup:f} defined in Section \ref{Sect:6}. Our main result is the following:
\begin{thm}\label{Main} 
	Let $\alpha \in (1,2)$, $p,s \in (1,\alpha)$ such that $s < \frac{\alpha}{p}$ and $q \in \left[s,\frac{\alpha}{p}\right)$. Assume that \eqref{Hg0} and \eqref{Hf0} are satisfied. Suppose that for every $\delta_{\alpha}, \delta_{\dist}, \delta_{f}, \delta_{g} \in (0,1)$ there exist ReLu DNNs $\Phi_g$, $\Phi_{\alpha}, \Phi_{\dist}$ and $\Phi_f$ satisfying Assumptions \ref{Sup:g}, \ref{Sup:D} and \ref{Sup:f}, respectively. Then for every $\epsilon \in (0,1)$, there exists a ReLu DNN $\Psi_{\epsilon}$ and its realization $\mathcal{R}(\Psi_{\epsilon}): D \to \R$ which is a continuous function such that:
	\begin{enumerate}
		\item Proximity in $L^q(D)$: If $u$ is the solution of \eqref{eq:1.1}
		\begin{equation}
			\left(\int_D \left|u(x) - \left(\mathcal{R}(\Psi_{\epsilon})\right)(x)\right|^q dx\right)^{\frac 1q} \leq \epsilon.
		\end{equation}
		\item Bounds: There exists $\widehat{B},\eta>0$ such that 
		\begin{equation}
			\mathcal{P}(\Psi_{\epsilon}) \leq \widehat{B}|D|^{\eta}d^{\eta} \epsilon^{-\eta}.
		\end{equation}
		The constant $\widehat{B}$ depends on $\norm{f}_{L^{\infty}(D)}$, the Lipschiptz constants of $g$, $\mathcal{R}(\Phi_f)$ and $\mathcal{R}(\Phi_{\alpha})$, and on $\diam(D)$.
	\end{enumerate}
\end{thm}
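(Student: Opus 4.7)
\medskip

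\noindent\textbf{Proof plan.} The natural route is to port the Grohs--Herrmann strategy to the nonlocal WoS representation \eqref{u(x)2}, replacing the Brownian tools by their $\alpha$-stable analogues supplied by \cite{AK1}, and then stitching the approximating DNNs together using the ReLU calculus. Concretely, the construction of $\Psi_\epsilon$ has four layers, and the analysis has to control the error introduced at each layer while keeping the parameter count polynomial in $d$, $|D|$, and $\epsilon^{-1}$.

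\medskip

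\noindent\emph{Step 1: truncation of the WoS walk.} The first move is to replace the a.s.\ finite stopping index $N$ in \eqref{u(x)2} by a deterministic cutoff $\mathcal N \in \N$. Using that the $\alpha$-stable WoS process exits $D$ geometrically fast (a fact that follows from the uniform lower bound on the probability of escape at each step in a bounded domain, and is the nonlocal analogue of the Brownian estimate used in \cite{Grohs}), I expect a bound of the form
\begin{equation*}
\bigl|u(x) - u_{\mathcal N}(x)\bigr| \le C\bigl(\norm{g}_{L^\infty} + \diam(D)^\alpha \norm{f}_{L^\infty}\bigr)\, \theta^{\mathcal N},
\end{equation*}
with $\theta \in (0,1)$ depending only on $d,\alpha,\diam(D)$, so that $\mathcal N \sim \log(1/\epsilon)$ suffices. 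Here $u_{\mathcal N}$ denotes the truncated sum in which the walk is forced to stop at step $\mathcal N$ (and the remaining contribution is replaced by $g$ at the current position, say).

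\medskip

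\noindent\emph{Step 2: Monte Carlo approximation of the expectations.} With $\mathcal N$ fixed, I would replace each expectation $\E_x[\cdot]$ by an empirical average over $M$ independent realizations $(\rho_n^{(m)})_{n \le \mathcal N}$ of the WoS walk. The quantity $r_n^\alpha V_1(0, f(\rho_{n-1}+r_n\cdot))$ is itself the expectation of $\int_0^{\tau_{B_1}} f(\rho_{n-1}+r_n X_s)\,ds$ under an $\alpha$-stable process started at $0$ and stopped at the exit time of $B_1$, so I would sample that quantity jointly with the jump $\rho_n - \rho_{n-1}$ (which is the exit location on the same trajectory). A Bienaym\'e--Chebyshev / Fubini argument, as in \cite{Grohs}, then produces a deterministic choice of the $M \cdot \mathcal N$ samples such that the empirical functional $\widetilde u_{\mathcal N,M}$ satisfies
\begin{equation*}
\Bigl(\int_D |u_{\mathcal N}(x)-\widetilde u_{\mathcal N,M}(x)|^q\, dx\Bigr)^{1/q} \le C\, |D|^{1/q}\, M^{-1/2},
\end{equation*}
with $C$ controlled by $\norm{g}_{L^\infty(D^c_{\rm eff})}$ and $\diam(D)^\alpha\norm{f}_{L^\infty}$. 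This fixes $M \sim \epsilon^{-2}$ up to polynomial factors in $d$ and $|D|$.

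\medskip

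\noindent\emph{Step 3: DNN emulation.} The function $\widetilde u_{\mathcal N,M}(x)$ has the explicit form
\begin{equation*}
\widetilde u_{\mathcal N,M}(x) = \frac{1}{M}\sum_{m=1}^M \Bigl[ g\bigl(\rho_{\mathcal N}^{(m)}(x)\bigr) + \sum_{n=1}^{\mathcal N} \bigl(r_n^{(m)}(x)\bigr)^\alpha \, \xi_n^{(m)}(x) \Bigr],
\end{equation*}
where $r_n^{(m)}(x) = \dist(\rho_{n-1}^{(m)}(x),\partial D)$, the jumps $\rho_n^{(m)}(x) = \rho_{n-1}^{(m)}(x) + r_n^{(m)}(x) \cdot Y_n^{(m)}$ depend on random directions $Y_n^{(m)}$ baked into the parameters, and $\xi_n^{(m)}(x)$ are empirical averages of $f$ along the sampled occupation trajectory in a unit ball, rescaled appropriately. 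The plan is to assemble $\Psi_\epsilon$ by composing the hypothesised DNNs $\Phi_{\dist}$, $\Phi_\alpha$, $\Phi_g$, $\Phi_f$ with the affine maps that implement the rescalings and translations, using standard ReLU-DNN calculus (parallelisation, composition, and the approximate product from \cite{Grohs}) to encode the products $(r_n^{(m)}(x))^\alpha \xi_n^{(m)}(x)$ and the outer sum over $m,n$. Each of these operations enlarges the parameter count multiplicatively by an absolute constant, so one full layer of the walk costs at most $O(\mathcal P(\Phi_{\dist})+\mathcal P(\Phi_\alpha)+\mathcal P(\Phi_f))$ parameters, giving the overall bound $\mathcal P(\Psi_\epsilon) \lesssim M\mathcal N \cdot \max(\mathcal P(\Phi_\bullet))$.

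\medskip

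\noindent\emph{Step 4: synthesis and parameter count.} Finally, I would calibrate the approximation tolerances $\delta_g,\delta_f,\delta_\alpha,\delta_{\dist}$ as inverse polynomials of $\epsilon$, propagate them through the $M\mathcal N$ compositions (the Lipschitz constants of the intermediate maps being controlled by the hypotheses and by $\diam(D)$), and combine with the errors from Steps 1--2 to reach the target $\epsilon$ in $L^q(D)$. The assumed polynomial-in-$(d,\delta^{-1})$ bounds on $\mathcal P(\Phi_\bullet)$ then give a polynomial-in-$(d,|D|,\epsilon^{-1})$ bound on $\mathcal P(\Psi_\epsilon)$.

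\medskip

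\noindent\emph{Main obstacle.} The delicate part is not the Brownian-style bookkeeping, which is now routine, but handling the two genuinely nonlocal features at once: the expected-occupation weight $V_1(0,f(\rho_{n-1}+r_n\cdot))$ must be emulated by a DNN built from $\Phi_f$ and $\Phi_\alpha$ without inflating the Lipschitz constants through the $\mathcal N$-fold composition, and the $\alpha$-stable jumps $Y_n^{(m)}$ can leave $D$ already at step~$1$, so the truncation in Step 1 and the boundary-data assumption \eqref{Hg0} must be treated carefully to keep $\norm{g}_{L^\infty}$-type quantities finite along sampled trajectories. Controlling the cumulative Lipschitz constant of the composed DNN, so that the final estimate really is polynomial in $d$ and not exponential in $\mathcal N$, is the step I expect to absorb most of the technical work.
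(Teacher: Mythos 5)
Your outline captures the general Grohs--Herrmann architecture, but two of its load-bearing steps do not survive contact with the $\alpha$-stable setting, and these are exactly the points where the paper has to depart from the Brownian template.

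First, the Monte Carlo rate $M^{-1/2}$ you invoke in Step 2 rests on a second-moment (Bienaym\'e--Chebyshev) bound, and the isotropic $\alpha$-stable exit position $X_{\sigma_{B(0,1)}}$ has \emph{no} second moment for $\alpha<2$ (Corollary \ref{cor:Xmoment}). The samples $g(\rho_{\mathcal N}^{(m)}(x))$ therefore need not be square-integrable under the polynomial-growth hypothesis \eqref{H2}, and the estimate
\[
\Bigl(\int_D |u_{\mathcal N}-\widetilde u_{\mathcal N,M}|^q\,dx\Bigr)^{1/q} \lesssim |D|^{1/q} M^{-1/2}
\]
is unavailable. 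The paper's actual route replaces it by the type-$s$ / Kahane--Khintchine bound of Corollary \ref{cor:MCq}, yielding the weaker but provable rate $M^{-(1-1/s)}$ for $s\in(1,\alpha)$ and $q\in[s,\alpha/p)$; the constraint $q<\alpha/p$ is precisely what keeps $\E_0[|X_{\sigma_{B(0,1)}}|^{pq}]$ finite. Your proposal silently assumes $q=2$ and would break exactly here.

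Second, your Step 1 introduces a deterministic cutoff $\mathcal N\sim\log(1/\epsilon)$ and bounds the tail by $C(\norm{g}_{L^\infty}+\cdots)\theta^{\mathcal N}$. But $g$ is defined on the unbounded set $D^c$ and is only assumed to have polynomial growth (Assumption \ref{Sup:g}), so $\norm{g}_{L^\infty}$ need not be finite; moreover ``replacing the remaining contribution by $g$ at the current position'' is undefined when $\rho_{\mathcal N}\in D$. The paper avoids any deterministic truncation: it keeps the random $N_i$ and uses the geometric domination of $N$ (Theorem \ref{teo:N}) to bound $\E_x[N^2]$, then selects a \emph{single} good realization of all the $N_i$ and $Y_{i,n}$ simultaneously (Lemma \ref{lem:sacada}, Lemma \ref{lem:sacada2}) for which the $L^q(D)$ error, the empirical mean of $N$, and the empirical mean of $\sum_n|Y_{i,n}|$ are all controlled. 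Those extra controlled quantities are what later make the DNN-size bookkeeping work.

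Finally, you correctly identify the exponential Lipschitz blow-up across the $\mathcal N$-fold composition as the central obstacle, but you leave it unresolved. The paper's mechanism is that $\delta_{\dist}$ is indeed chosen \emph{exponentially} small in $M^{1/s}C_1+M(C_2+C_3)$, yet this does not blow up $\mathcal P(\Psi_\epsilon)$ because \eqref{HD-2} posits that $\vertiii{\mathcal D(\Phi_{\dist})}\lesssim d^b\lceil\log(\delta_{\dist}^{-1})\rceil^a$ --- polylogarithmic, not polynomial, in $\delta_{\dist}^{-1}$. Without this logarithmic hypothesis on $\Phi_{\dist}$ your Step 4 could not close. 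The overall decomposition into the $g$-term and the $f$-term (Propositions \ref{Prop:homo} and \ref{Prop:6p1}), the recursive construction of $\Phi_{i,n}$ approximating $X^i_{\mathcal I(n)}$, and the final assembly via composition, parallelisation, and the approximate product $\Upsilon$ of Lemma \ref{lem:DNN_mult} do match your Step 3 in spirit.
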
 
\subsection{Idea of the proof} In this section we sketch the proof of Theorem \ref{Main}. Following \cite{Grohs}, we will work with each of the terms in \eqref{u(x)2} by separated, and for each of them we will prove that the value for the solution can be approximated by DNN that do not suffer the curse of dimensionality. For full details see Propositions \ref{Prop:homo} and \ref{Prop:6p1}. We will sketch the proof of the homogeneous part, the other term is pretty similar.

\medskip

The proof will be divided in several steps: First of all we approximate in expectation the respective term of \eqref{u(x)2} with Monte Carlo simulations. Due the nature of WoS processes we need to approximate also random variables such as copies of the number of iterations in the WoS process $N$ and copies of the norm of isotropic $\alpha$-stable process exiting a unit ball centered at the origin, $|X_{\sigma_{B(0,1)}}|$. Next we find Monte Carlo simulations that approximate the term of \eqref{u(x)2} punctually and latter in $L^q(D)$, whose copies satisfy also the approximations of $N$ and $|X_{\sigma_{B(0,1)}}|$ mentioned before. Those additional terms help us to find ReLu DNNs that approximates the WoS proccesses, then we can approximate the Monte Carlo simulation by a ReLu DNN. Next we use the found ReLu DNN to approximate the term of \eqref{u(x)2}, using suitable choices of the parameters in order to have an accuracy of $\epsilon \in (0,1)$ in the approximation. Finally we study the found ReLu DNN, and we prove that the total number of parameters that describes de DNN are at most polynomial in the dimension and in the reciprocal of the accuracy.

\subsection{Discussion} As stated before, for the approximation of the solutions of Problem \eqref{eq:1.1} we follow the ideas presented in the work of Grohs and Herrmann \cite{Grohs}, with several changes due the non local nature of the fractional Laplacian. In particular:
\begin{enumerate}
	\item The non local problem has the boundary condition $g$ defined on the complement of the domain $D$ and the local problem has $g$ defined on $\partial D$. This variation changes the way to approximates $g$ by DNNs. The Assumption \ref{Sup:g} is classical in the literature for functions defined in  unbounded sets (see, e.g. \cite{Hutz}).
	\item The isotropic $\alpha$-stable process associated to the fractional Laplacian has no second moment, therefore the approximation can not be approximated in $L^{2}(D)$, but in $L^{q}(D)$ for some  suitable $q<2$.
	\item The process associated to the local case is a Brownian motion that is continuous, then the norm of the process exiting the unit ball centered at the origin is equal to 1, i.e, $|X_{\sigma_{B(0,1)}}|=1$. In the non local case the isotropic $\alpha$-stable is a pure jump process, therefore $|X_{\sigma_{B(0,1)}}|>1$. This is the reason that in our proof we approximate the copies of this random variable to have that the copies of $X_{\sigma_{B(0,1)}}$ exits near the domain $D$.
	\item The last notable difference is about the sum that appears in \eqref{u(x)2}: The value of $r_n$ is raised to the power of $\alpha$, then we need an extra hypothesis for the approximation of the function $(\cdot)^{\alpha}$ by DNNs. In the local case $r_n$ is squared, then can be approximated by DNNs using the Lemma \ref{lem:DNN_mult} stated in the Section \ref{Sect:3}.
\end{enumerate}

\medskip
A similar way to represent solutions to the Problem \eqref{eq:1.1} is found in the work of Gulian and Pang \cite{Gulian}. Thanks to stochastic calculus results (see, e.g. \cite{Applebaum,Bertoin}), the processes described in that work and the isotropic $\alpha$-stable processes are similar. In addition, in that paper they found a Feynman-Kac formula for the parabolic generalized problem for the associated fractional Laplacian. In a possible extension we could see if the parabolic generalized problem can be adapted to our setting, i.e. the solution of the parabolic case can be approximated by DNNs that overcome the curse of dimensionality.

\medskip

Finally, just before finishing this work, we learned about the research by Changtao Sheng et al. \cite{ultimo}, who have showed numerical simulations of the Problem \eqref{eq:1.1} using similar Monte Carlo methods. However, our methods are radically different in terms of the main goal, which is here to approximate the solution by DNNs in a rigorous fashion.

\section{Preliminaries}\label{Sect:2}


\subsection{Notation} Along this paper, we shall use the following conventions:
\begin{itemize}
\item $\N=\{1,2,3,...\}$ will be the set of Natural numbers.
\item For any $q\geq 1$, $(\Omega, \mathcal F, \mu)$ measure space,  $L^q(\Omega,\mu)$ denotes the Lebesgue space of order $q$ with the measure $\mu$. If $\mu$ is the Lebesgue measure, then the Lebesgue space will be denoted as $L^q(\Omega)$.
\end{itemize}

\subsection{A quick review on Lévy processes}\label{Sect:2p1} Let us introduce a brief review on the Lévy processes needed for the proof of the main results. For a detailed account on these processes, see e.g. \cite{Applebaum, Bertoin,AK2,Schilling}.

\begin{defn}
	 $L:=(L_{t})_{t \geq 0}$ is a Lévy process in $\R^d$ if it satisfies $L_0 = 0$ and
	
	\begin{enumerate}
		\item[i)] $L$ has independent increments, namely, for all $n \in \N$ and for each $0 \leq t_{1} < ... < t_{n} < \infty $, the random variables $(L_{t_{2}}-L_{t_{1}},...,L_{t_{n}}-L_{t_{n-1}})$ are independent.
		\item[ii)] $L$ has stationary increments, namely, for all $s \geq 0$, $L_{t+s}-L_{s}$ and $L_{t}$ have the same law.
		\item[iii)] $L_{t}$ is continuous on the right and has limit on the left for all $t > 0$ (i.e., $(L_{t})_{t \geq 0}$ is \textbf{càdlàg}).
	\end{enumerate}
\end{defn}

Examples of Lévy processes are the Brownian motion, but also processes with jumps such as the \emph{Poisson process} and the \emph{compound Poisson process} \cite{Applebaum}.

\begin{defn}
	The Poisson process of intensity $\lambda>0$ is a Lévy process $N$ taking values in $\N \cup \{0\}$ wherein each $N(t) \sim Poisson(\lambda t)$, then we have
	\[
	\PP(N(t) = n) = \frac{(\lambda t)^n}{n!} e^{-\lambda t}.
	\]
\end{defn} 

\begin{defn}
	Let $(Z(n))_{n \in \N}$ be a sequence of i.i.d. random variables taking values in $\R^d$ with law $\mu$ and let $N$ be a Poisson process with intensity $\lambda$ that is independent of all the $Z(n)$. The compound Poisson process $Y$ is defined as follows:
	\[
	Y(t) = Z(1) + ... + Z(N(t)),
	\]	
	for each $t \geq 0$.
\end{defn}

Another important element is the so-called Lévy's characteristic exponent.
\begin{defn}
Let $(L_t)_{t \geq 0}$ be a Lévy process in $\R^d$.
	\begin{itemize}
	\item Its characteristic exponent $\Psi:\R^d \rightarrow \C$ is the continuous function that satisfies $\Psi(0)=0$ and for all $t \geq 0$,
	\begin{equation}\label{eq:EC}
		 \E\left[e^{i\xi \cdot L_{t}}\right] = e^{-t\Psi(\xi)}, \qquad \xi \in \R^d\setminus \{0\}.
	\end{equation}
	\item A Lévy triple is $(b,A,\Pi)$, where $b \in \R^d$, $A \in \R^{d \times d}$ is a positive semi-definite matrix, and $\Pi$ is a Lévy measure in $\R^d$, i.e.
	\begin{equation}
		\Pi(\{0\})=0 \quad \hbox{  and  } \quad  \int_{\R^d} (1 \wedge |z|^2) \Pi(dz) < \infty.
	\end{equation}
	\end{itemize}
\end{defn}
A Lévy process is uniquely determined via its Lévy triple and its characteristic exponent.
\begin{teo}[Lévy-Khintchine, \cite{AK2}]
	Let $(b,A,\Pi)$ be a Lévy triple. Define for each $\xi \in \R^d$
	\begin{equation}\label{eq:LK}
		\Psi(\xi) = ib\cdot \xi + \frac{1}{2} \xi \cdot A\xi + \int_{\R^d} \left( 1 - e^{i\xi\cdot z} + i\xi\cdot z {\bf 1}_{\{|z|< 1\}} \right) \Pi(dz).
	\end{equation}
	If $\Psi$ is the characteristic exponent of a Lévy process with triple $(b,A,\Pi)$ in the sense of \eqref{eq:EC}, then it necessarily satisfies \eqref{eq:LK}. Conversely, given \eqref{eq:LK} there exists a probability space $(\Omega, \mathcal{F},\PP)$, on which a Lévy process is defined having characteristic exponent $\Psi$ in the sense of \eqref{eq:EC}.
\end{teo}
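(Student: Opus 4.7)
The theorem splits into two implications that call for quite different techniques, so I would handle them separately. For the forward direction, my plan is to exploit independent and stationary increments to show that the distribution of each $L_t$ is infinitely divisible: since $L_t = \sum_{k=1}^n (L_{kt/n}-L_{(k-1)t/n})$ is a sum of $n$ i.i.d. increments, its characteristic function admits an $n$-th convolution root for every $n \in \N$. This, together with càdlàg regularity, forces the map $t \mapsto \E[e^{i\xi\cdot L_t}]$ to be a pointwise exponential semigroup, hence of the form $e^{-t\Psi(\xi)}$ with $\Psi$ continuous and $\Psi(0)=0$. The hard content is then the classical representation theorem for infinitely divisible characteristic functions: I would obtain $\Psi$ as a weak limit of compound-Poisson exponents, approximating the law of $L_1$ by $\mu_n^{*n}$ where $\mu_n$ is the $n$-th convolution root, and identifying the limits of the first moment (producing $b$), the covariance of the truncated part (producing $A$), and the tail intensities (producing $\Pi$). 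The truncation $\mathbf{1}_{\{|z|<1\}}$ inside \eqref{eq:LK} appears as the centering that makes the integrand $O(|z|^2)$ near the origin, reconciling it with the integrability condition $\int (1\wedge |z|^2)\Pi(dz)<\infty$.

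For the converse, I would build the process explicitly on a product probability space $(\Omega,\mathcal{F},\PP)$ as the sum of three independent pieces: a linear drift $t\mapsto bt$; a centered Gaussian Lévy process $W_t$ with covariance $tA$, obtained from a standard Brownian motion via the square root of $A$; and a pure-jump part produced from a Poisson random measure $N(ds,dz)$ on $[0,\infty)\times(\R^d\setminus\{0\})$ with intensity $ds\otimes \Pi(dz)$. The pure-jump part would be split as
\begin{equation*}
J_t^{(1)} = \int_0^t\!\!\int_{|z|\geq 1} z\, N(ds,dz), \qquad J_t^{(2)} = \lim_{\ve\downarrow 0}\int_0^t\!\!\int_{\ve \leq |z|<1} z\,\widetilde N(ds,dz),
\end{equation*}
where $\widetilde N = N - ds\otimes \Pi(dz)$. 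The large-jump piece $J^{(1)}$ is a genuine compound Poisson process because $\Pi(\{|z|\geq 1\})<\infty$. The compensated small-jump piece $J^{(2)}$ is constructed as an $L^2$ limit of square-integrable càdlàg martingales whose variances are dominated by $\int_{|z|<1}|z|^2\Pi(dz)$; Doob's inequality gives uniform convergence on compacts in $L^2$, which after extracting a subsequence yields an almost-sure càdlàg limit and thus a Lévy process. Setting $L_t := bt + W_t + J_t^{(1)} + J_t^{(2)}$ and computing the characteristic function of each component (using the exponential formula for Poisson integrals, and passing to the limit $\ve\downarrow 0$ by dominated convergence since characteristic functions are uniformly bounded by $1$), one recovers exactly \eqref{eq:LK}.

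I expect the main obstacle to live in two places. First, in the forward direction, pinning down the limit procedure that extracts $(b,A,\Pi)$ from the approximating compound-Poisson exponents requires care with the truncation region, and with showing that the candidate $\Pi$ is genuinely a Lévy measure (this is where the $(1\wedge |z|^2)$ integrability emerges, not merely assumed). Second, in the converse, the rigorous construction of $J^{(2)}$ as a càdlàg Lévy process demands both an $L^2$-type Itô isometry for compensated Poisson integrals and a delicate independence argument to guarantee that the sum of the four pieces is càdlàg with independent increments. Everything else — verification of $L_0=0$, stationary increments, and independence — follows cleanly from the construction and from the mutual independence of the drift, Brownian, and Poissonian ingredients.
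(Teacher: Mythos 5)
The paper does not prove this theorem: it states it as a cited result from Kyprianou and Pardo \cite{AK2} and uses it as background for the stochastic representation of the fractional Laplacian, so there is no in-paper proof to compare against. Your sketch is a faithful outline of the standard textbook proof (the one found in \cite{Applebaum}, \cite{Bertoin}, Sato, or \cite{AK2} itself): for the necessity direction, infinite divisibility of the one-dimensional marginals via the telescoping sum $L_t=\sum_k (L_{kt/n}-L_{(k-1)t/n})$, the semigroup identity for $t\mapsto \E[e^{i\xi\cdot L_t}]$ together with right-continuity to force the form $e^{-t\Psi(\xi)}$, and then the classical representation of infinitely divisible laws by approximating with compound-Poisson exponents and extracting $b$, $A$, $\Pi$ from drift, covariance, and tail intensities; for the sufficiency direction, the Lévy--Itô construction on a product space with a drift, a Gaussian part $A^{1/2}B_t$, a compound-Poisson large-jump part, and the compensated small-jump $L^2$-martingale limit. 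Both legs are conceptually correct.

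Two small points you should tighten if this were to be written out in full. First, check the sign convention: with the paper's definition $\E[e^{i\xi\cdot L_t}]=e^{-t\Psi(\xi)}$ and $\Psi(\xi)=ib\cdot\xi+\tfrac12\xi\cdot A\xi+\int(1-e^{i\xi\cdot z}+i\xi\cdot z\mathbf 1_{\{|z|<1\}})\Pi(dz)$, a drift of $+bt$ contributes $e^{it\xi\cdot b}=e^{-t(-ib\cdot\xi)}$, so the constructed drift must be $-bt$ (or, equivalently, one relabels $b\mapsto -b$); this is a bookkeeping detail but easy to get backwards. Second, ``extracting a subsequence yields an almost-sure càdlàg limit'' is not quite how the argument is usually closed: one applies Doob's $L^2$ maximal inequality to the compensated Poisson integrals over $\ve\le |z|<1$ to get a Cauchy sequence in the space of càdlàg processes with the uniform-on-compacts topology, and then a Borel--Cantelli argument along a fast subsequence gives an almost-sure uniform limit, which is therefore càdlàg; without the maximal inequality, pointwise $L^2$ convergence at fixed $t$ does not by itself preserve path regularity. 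These are standard repairs and do not change the shape of your argument.
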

For the next Theorem we define the \emph{Poisson random measure}.
\begin{defn}
	Let $(S,\mathcal{S},\eta)$ be an arbitrary $\sigma$-finite measure space and $(\Omega,\mathcal{F},\PP)$ a probability space. Let $N: \Omega \times \mathcal{S} \to \N \cup \{0,\infty\}$ such that $(N(\cdot,A))_{A \in \mathcal{S}}$ is a family of random variables defined  on $(\Omega,\mathcal{F},\PP)$. For convenience we supress the dependency of $N$ on $\omega$. $N$ is called a Poisson random measure on $S$ with intensity $\eta$ if
	\begin{enumerate}
		\item[i)] For mutually disjoint $A_1,...,A_n$ in $\mathcal{S}$, the variables $N(A_1),...,N(A_n)$ are independent.
		\item[ii)] For each $A \in \mathcal{S}$, $N(A) \sim \hbox{Poisson}(\eta(A))$,
		\item[iii)] $N(\cdot)$ is a measure $\PP$-almost surely.
	\end{enumerate} 
\end{defn}
\begin{rem}
	In the next theorem we use $S \subset [0,\infty) \times \R^d$, and the intensity $\eta$ will be defined on the product space.
\end{rem}
From the Lévy-Khintchine formula, every Lévy process can be decomposed in three components: a Brownian part with drift, the large jumps and the compensated small jumps of the process $L$. 
\begin{teo}[Lévy-Itô Decomposition]\label{teo:levy_ito}
	 Let $L$ be a Lévy process with triple $(b,A,\Pi)$. Then there exists process $L^{(1)}$, $L^{(2)}$ y $L^{(3)}$ such that for all $t \geq 0$
	 \[
	 L_t = L^{(1)}_t + L^{(2)}_t + L^{(3)}_t,
	 \] 
	 where 
	\begin{enumerate}
		\item $L^{(1)}_t = bt + AB_{t}$ and $B_{t}$ is a standard $d$-dimensional Brownian motion.
		\item $L^{(2)}_t$ satisfies
		\[
		\displaystyle L^{(2)}_t = \int_{0}^{t}\int_{|z|\geq 1} zN(ds,dz),
		\]
		where $N(ds,dz)$ is a Poisson random measure on $[0,\infty)\times\{z \in \R^d: |z| \geq 1\}$ with intensity 
		\[
		\displaystyle \Pi(\{z\in \R^d :|z|\geq 1\})dt \times \frac{\Pi(dz)}{\Pi(\{z\in \R^d :|z|\geq 1\})}.
		\]
		If $\Pi(\{z\in \R^d :|z|\geq 1\})=0$, then $L^{(2)}$ is the process identically equal to 0. In other words $L^{(2)}$ is a compound Poisson process.
		\item The process $L^{(3)}_t$ satisfies
		\[
		\displaystyle L^{(3)}_t = \int_{0}^{t} \int_{|z|<1} z \widetilde{N}(ds,dz),
		\]
		where $\widetilde{N}(ds,dz)$ is the compensated Poisson random measure, defined by
		\[
		\displaystyle \widetilde{N}(ds,dz) = N(ds,dz) - ds\Pi(dz),
		\]
		with $N(ds,dz)$ the Poisson random measure on $[0,\infty)\times \{z \in \R^d: |z|<1\}$ with intensity 
		\[
		\displaystyle ds \times \left.\Pi(dz)\right|_{\{z \in \R^d : |z| < 1\}}.
		\]
	\end{enumerate}
\end{teo}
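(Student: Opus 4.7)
The plan breaks the Lévy process into its three natural components by first identifying its jump measure, then integrating separately over large and small jumps (with compensation in the latter case), and finally isolating the continuous piece as a Brownian motion with drift.

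\textbf{Step 1 (Jump random measure).} For each Borel set $A \subset \R^d$ bounded away from $0$ and each $t\ge 0$, I would define
\[
N([0,t]\times A) := \#\{ s \in (0,t] : \Delta L_s \in A \}, \qquad \Delta L_s := L_s - L_{s-}.
\]
Using the independent and stationary increments of $L$ together with its càdlàg nature, one checks that $N([0,t]\times A)$ is Poisson distributed with parameter $t\,\Pi(A)$ and that counts over disjoint Borel sets and disjoint time intervals are mutually independent. This realizes $N$ as a Poisson random measure on $[0,\infty)\times(\R^d\setminus\{0\})$ with intensity $ds\otimes\Pi(dz)$, where $\Pi$ is the Lévy measure in the triple.

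\textbf{Step 2 (Large jumps and compensated small jumps).} Because $\Pi(\{|z|\ge 1\})<\infty$, the process
\[
L^{(2)}_t := \int_0^t \int_{|z|\ge 1} z\, N(ds,dz)
\]
consists of finitely many summands on each compact interval and, by Step 1, is a compound Poisson process with the intensity and jump law stated in the theorem. For $|z|<1$ the naive sum of jumps can fail to converge absolutely, so I would introduce the compensator and define
\[
L^{(3)}_t := \int_0^t \int_{|z|<1} z\, \widetilde N(ds,dz), \qquad \widetilde N(ds,dz) := N(ds,dz) - ds\,\Pi(dz),
\]
as the $L^2$-limit of the compensated integrals over $\{\varepsilon \le |z|<1\}$ as $\varepsilon\downarrow 0$. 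The Poisson isometry, combined with $\int_{|z|<1}|z|^2\,\Pi(dz)<\infty$, shows the approximating sequence is Cauchy in $L^2$; a standard maximal-inequality argument upgrades this to uniform convergence on compact time intervals along a subsequence, producing a càdlàg $L^2$-martingale with independent stationary increments.

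\textbf{Step 3 (Continuous part and identification).} Set $L^{(1)}_t := L_t - L^{(2)}_t - L^{(3)}_t$. By construction $L^{(1)}$ has no jumps, so it is a continuous Lévy process; a classical characterization (continuous increments forced to be Gaussian via the central limit theorem applied to dyadic partitions) yields $L^{(1)}_t = bt + \sigma B_t$ for a standard Brownian motion $B$, with drift $b$ matching the one in the triple. Computing $\E[\exp(i\xi\cdot L^{(j)}_t)]$ for each $j$ via Campbell's formula for Poisson integrals recovers exactly the three summands of the Lévy-Khintchine exponent \eqref{eq:LK}, confirming the decomposition. Mutual independence of $L^{(2)}$ and $L^{(3)}$ is inherited from the independent scattering of $N$ over the disjoint regions $\{|z|\ge 1\}$ and $\{|z|<1\}$, and independence of $L^{(1)}$ from both follows from continuity and a Gaussian/Poisson decoupling argument. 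The main obstacle is Step 2: making rigorous sense of the compensated small-jump integral as a genuine càdlàg martingale is where the Lévy measure condition $\int(1\wedge|z|^2)\,\Pi(dz)<\infty$ is used in the sharp form $\int_{|z|<1}|z|^2\,\Pi(dz)<\infty$, while the remaining identifications are bookkeeping on characteristic functions.
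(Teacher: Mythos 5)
The paper does not prove this theorem: it is stated as background in the preliminaries (Section~\ref{Sect:2p1}), with the reader referred to standard monographs such as \cite{Applebaum,Bertoin,AK2,Schilling}. So there is no in-paper argument to compare against. Your sketch is a correct outline of the classical route used in those references: realize the jump counting measure as a Poisson random measure with intensity $ds\otimes\Pi(dz)$, separate off the finitely many large jumps as a compound Poisson process, build the compensated small-jump martingale as an $L^2$ limit using $\int_{|z|<1}|z|^2\,\Pi(dz)<\infty$ and Doob's inequality, and identify the continuous residual as Brownian motion with drift via a CLT-type characterization. One caveat on Step 3: you describe the independence of $L^{(1)}$ from $(L^{(2)},L^{(3)})$ as a ``Gaussian/Poisson decoupling argument,'' but in the standard treatments this is actually one of the more delicate steps (it does not follow merely from continuity of $L^{(1)}$ and independent scattering of $N$; one must show that the residual process and the jump field are jointly independent, e.g.\ via a careful characteristic-function or filtration argument before passing to the $\varepsilon\downarrow 0$ limit). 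If you were writing this out in full, that is the point that would require the most work.
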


\subsection{Lévy processes and the Fractional Laplacian} A particular set of Lévy processes are the so-called isotropic $\alpha$-stable processes, for $\alpha \in (0,2)$. The following definitions can be found in \cite{AK2} in full detail.
\begin{defn}\label{def:iso}
	Let $\alpha \in (0,2)$. $X := (X_t)_{t \geq 0}$ is an isotropic $\alpha$-stable process if $X$ has a Lévy triple $(0,0,\Pi)$, with 
	\begin{equation}\label{eq:levy_measure}
		\Pi(dz) = 2^{-\alpha} \pi^{-d/2} \frac{\Gamma((d+\alpha)/2)}{|\Gamma(-\alpha/2)|} \frac{1}{|z|^{\alpha + d}} dz, \hspace{.5cm} z \in \R^d.
	\end{equation}
	Recall that $\Gamma$ here is the Gamma function.
\end{defn}

\begin{defn}[Equivalent definitions of an isotropic $\alpha$-stable process] $\left.\right.$
	\begin{enumerate}
		\item $X$ is an isotropic $\alpha$-stable process iff
		\begin{equation}\label{eq:scaling}
			\hbox{for all }\quad c>0, \quad (cX_{c^{-\alpha}t})_{t \geq 0} \quad \hbox{and} \quad X \quad \hbox{have the same law,}
		\end{equation}
		and for all the orthogonal transformations $U$ on $\R^d$,
		\[
		(UX_t)_{t\geq 0} \quad \hbox{and} \quad X \quad \hbox{have the same law.}
		\]
		\item An isotropic $\alpha$-stable process is an stable process whose characteristic exponent is given by
		\begin{equation}
			\Psi(\xi) = |\xi|^\alpha, \hspace{.5cm} \xi \in \R^d.
		\end{equation}
	\end{enumerate}
\end{defn}
\begin{rem}
	For the first definition, we say that $X$ satisfies the \emph{scaling property} and is \emph{rotationally invariant}, respectively.
\end{rem}
Note by Definition \ref{def:iso} and by Lévy-Itô decomposition (Theorem \ref{teo:levy_ito}),an isotropic $\alpha$-stable process can be decomposed as
\begin{equation}
	X_{t} = \int_0^t \int_{|z| \geq 1} zN(ds,dz) + \int_0^t \int_{|z| < 1} z \widetilde{N}(ds,dz).
\end{equation}
From this equation, we can conclude that every isotropic $\alpha$-stable process is a pure jump process, whose jumps are determined by the Lévy measure defined in \eqref{eq:levy_measure}. We enunciate further properties about these processes:
\begin{teo}
	Let $g$ be a locally bounded, submultiplicative function and let $L$ a Lévy process, then the following are equivalent:	
	\begin{enumerate}
		\item $\E[g(L_t)]< \infty$ for some $t > 0$.
		\item $\E[g(L_t)] < \infty $ for all $t > 0$.
		\item $\int_{|z|>1} g(z) \Pi(dz) < \infty.$
	\end{enumerate}
\end{teo}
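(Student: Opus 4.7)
The plan is to dispatch (2) $\Rightarrow$ (1) trivially and then establish (1) $\Leftrightarrow$ (3) via the Lévy-Itô decomposition (Theorem \ref{teo:levy_ito}). Write $L_t = X_t + Y_t$, where $Y_t := L^{(2)}_t$ is the compound Poisson process of jumps of size at least one, with intensity $\lambda := \Pi(\{|z| \geq 1\})$ and normalized jump law $\mu(dz) := \Pi(dz)|_{\{|z|\geq 1\}}/\lambda$, and $X_t := L^{(1)}_t + L^{(3)}_t$ is the independent remainder (drift, Brownian part, and compensated small jumps). The workhorse fact about locally bounded submultiplicative functions is the classical bound $g(x) \leq C e^{a|x|}$ for some constants $C,a > 0$; since $X_t$ has bounded jumps together with a Gaussian component and a deterministic drift, it possesses finite exponential moments of all orders, so $\E[g(X_t)]$ and $\E[g(-X_t)]$ are automatically finite, leaving the compound-Poisson part $Y_t$ as the object that carries the whole obstruction.

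For (3) $\Rightarrow$ (2), I would condition on $N(t) \sim \mathrm{Poisson}(\lambda t)$ and use $Y_t = \sum_{i=1}^{N(t)} Z_i$ with i.i.d.\ $Z_i \sim \mu$. Iterating the submultiplicativity inequality $g(u+v) \leq c\, g(u) g(v)$ gives $g(Z_1 + \cdots + Z_n) \leq c^{n-1} \prod_{i=1}^n g(Z_i)$, hence
\begin{equation*}
\E[g(Y_t)] \;\leq\; g(0) e^{-\lambda t} + \sum_{n=1}^{\infty} e^{-\lambda t}\frac{(\lambda t)^n}{n!}\, c^{n-1} \E[g(Z_1)]^n \;=\; g(0) e^{-\lambda t} + c^{-1} e^{-\lambda t}\bigl(\exp(c\lambda t\, \E[g(Z_1)]) - 1\bigr),
\end{equation*}
which is finite because $\lambda\, \E[g(Z_1)] = \int_{|z|\geq 1} g(z)\, \Pi(dz) < \infty$ by hypothesis (3). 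A final application of submultiplicativity combined with the independence $X_t \perp Y_t$ yields $\E[g(L_t)] \leq c\, \E[g(X_t)]\, \E[g(Y_t)] < \infty$ for every $t > 0$, which is (2).

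For (1) $\Rightarrow$ (3), fix $t > 0$ with $\E[g(L_t)] < \infty$. Writing $\E[g(L_t)] = \int\! \int g(x+y)\, \mu_{X_t}(dx)\, \mu_{Y_t}(dy)$ and invoking Fubini, the function $x \mapsto \E[g(x + Y_t)]$ is finite for $\mu_{X_t}$-almost every $x$; pick any such $x^{\ast}$. Submultiplicativity then furnishes the key one-line bound
\begin{equation*}
\E[g(Y_t)] \;=\; \E\bigl[g\bigl((x^{\ast} + Y_t) + (-x^{\ast})\bigr)\bigr] \;\leq\; c\, g(-x^{\ast})\, \E[g(x^{\ast} + Y_t)] \;<\; \infty.
\end{equation*}
Restricting the expectation to the event $\{N(t) = 1\}$ (which has positive probability $\lambda t e^{-\lambda t}$ and on which $Y_t$ is distributed as a single draw from $\mu$) extracts $\E[g(Z_1)] < \infty$, i.e.\ $\int_{|z|\geq 1} g(z)\, \Pi(dz) = \lambda\, \E[g(Z_1)] < \infty$, which is (3). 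The main subtlety to handle carefully will be the classical reduction $g(x) \leq C e^{a|x|}$ for a locally bounded submultiplicative $g$, together with the measurability of $g$ needed to apply Fubini; once these standard facts are in place the argument above isolates the compound-Poisson component as the sole obstruction to $g$-integrability, and the equivalence follows in a symmetric way from the two preceding implications.
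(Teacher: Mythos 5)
The paper itself offers no proof of this theorem: it is quoted as a classical result on $g$-moments of L\'evy processes (it is Theorem 25.3 in Sato's monograph, and the argument you propose is essentially the standard one found there), so there is no in-paper proof to compare against. Your cycle $(1)\Rightarrow(3)\Rightarrow(2)\Rightarrow(1)$ is logically complete and each implication is sound: the L\'evy--It\^o splitting $L_t=X_t+Y_t$, the bound $g(x)\leq Ce^{a|x|}$ for locally bounded submultiplicative $g$, the Poisson conditioning for $(3)\Rightarrow(2)$, and the Fubini/translation trick followed by restriction to $\{N(t)=1\}$ for $(1)\Rightarrow(3)$ are all correct. Two points deserve tightening. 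First, the assertion that $X_t=L^{(1)}_t+L^{(3)}_t$ has finite exponential moments of every order is itself a nontrivial lemma (a L\'evy process whose jumps are bounded by $1$ satisfies $\E[e^{a|X_t|}]<\infty$ for all $a>0$); since this fact, combined with $g(x)\leq Ce^{a|x|}$, carries the entire ``automatic'' half of your argument, it should be proved (e.g.\ via analytic continuation of the characteristic exponent, or a direct estimate on the compensated small-jump martingale) or explicitly cited rather than asserted. Second, in $(1)\Rightarrow(3)$ the restriction to $\{N(t)=1\}$ uses $g\geq 0$ (part of the standard definition of submultiplicative in this setting, but worth stating) and tacitly assumes $\lambda=\Pi(\{|z|\geq 1\})>0$; when $\lambda=0$ condition $(3)$ holds vacuously, so the degenerate case is harmless but should be dispatched in a sentence.
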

An important result from the previous theorem gives necessary and sufficient conditions for the existence of the $p$ moment of an isotropic $\alpha$-stable process.
\begin{cor}\label{cor:Xmoment}
	Let $X$ be an $\alpha$-stable process and $p>0$, then the following are equivalent
	\begin{enumerate}
		\item $p < \alpha$.
		\item $\E[|X_t|^p]< \infty$ for some $t>0$.
		\item $\E[|X_t|^{p}]< \infty$ for all $t > 0$.
		\item $\int_{|z|>1} |z|^{p} \Pi(dz) < \infty$.
	\end{enumerate}
\end{cor}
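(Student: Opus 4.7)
The plan is to apply the general Lévy-process moment theorem stated just before the corollary, specialized to the explicit $\alpha$-stable Lévy measure \eqref{eq:levy_measure}. I would take the test function $g(x) = 1 + |x|^p$, which is locally bounded on $\R^d$, and verify that it is submultiplicative with constant $2^p$ using the elementary inequality $|x+y|^p \leq 2^p(|x|^p + |y|^p)$. Since $\E[|X_t|^p] < \infty$ is equivalent to $\E[g(X_t)] < \infty$, the theorem immediately yields the chain
\[
(2) \Longleftrightarrow (3) \Longleftrightarrow \int_{|z|>1} g(z)\,\Pi(dz) < \infty.
\]

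Next I would reduce the last integral condition to (4). Since every Lévy measure satisfies $\Pi(\{|z|>1\}) < \infty$, the additive constant in $g$ contributes a finite quantity, giving
\[
\int_{|z|>1} g(z)\,\Pi(dz) < \infty \Longleftrightarrow \int_{|z|>1}|z|^p\,\Pi(dz) < \infty,
\]
which is precisely (4).

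The final step is the equivalence $(1) \Longleftrightarrow (4)$, where I would invoke the explicit form of the Lévy measure from \eqref{eq:levy_measure}. Passing to polar coordinates, the integral in (4) reduces, up to a positive constant depending on $d$ and $\alpha$, to
\[
\int_1^\infty r^{p - \alpha - 1}\, dr,
\]
which converges if and only if $p < \alpha$. Chaining the three equivalences closes the argument.

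I do not expect any genuine obstacle: all the steps are routine once the correct $g$ has been chosen. The only point that demands minimal attention is the choice of $g$ itself, since the bare function $|x|^p$ fails to be submultiplicative (it vanishes at the origin), which is exactly why the additive correction $g(x) = 1 + |x|^p$ is used.
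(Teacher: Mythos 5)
Your proof is correct and it is precisely the argument the paper implicitly intends: the corollary is stated right after the general moment theorem for Lévy processes with the sentence "An important result from the previous theorem gives\ldots", and no proof is supplied, so one is expected to pick a locally bounded submultiplicative $g$ comparable to $|\cdot|^p$ and then evaluate the tail integral against the explicit density \eqref{eq:levy_measure}. Your choice $g(x)=1+|x|^p$ with constant $2^p$, the reduction $\int_{|z|>1} g\,d\Pi<\infty \Leftrightarrow \int_{|z|>1}|z|^p\,d\Pi<\infty$ via $\Pi(\{|z|>1\})<\infty$, and the polar-coordinates computation $\int_1^\infty r^{p-\alpha-1}\,dr<\infty \Leftrightarrow p<\alpha$ are all correct and exactly match what the corollary requires.
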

\begin{rem}
	If $\alpha \in (0,1)$, by this corollary we have that $X$ has no first moment. Otherwise, if $\alpha \in (1,2)$ then it has finite first moment, but no second moment.
\end{rem}

\subsection{Type $s$ spaces and Monte Carlo Methods.}
We now introduce some results that controls the difference between the expectation of a random variable and a Monte Carlo operator associated to his expectation in $L^{p}$ norm, $p>1$. For more details see \cite{Hutz2}. In the following results are simplified the results of \cite{Hutz2}. Along 
this section we work with real valued Banach spaces.
\medskip

We start with some concepts related to Banach spaces. The reader can consult \cite{Hutz2,ProbBan} for more details in this topic.
\begin{defn}
	Let $(\Omega, \mathcal{F}, \PP)$ be a probability space, let $J$ be a set, and let $r_j : \Omega \to \{-1,1\}$, $j \in J$, be a family of independent random variables with for all $j \in J$,
	\[
	\PP(r_j = 1) = \PP(r_j = -1) = \frac{1}{2}.
	\]
	Then we say that $(r_j)_{j \in J}$ is a $\PP$-Rademacher family.
\end{defn}

\begin{defn}\label{def:stype}
	Let $(r_j)_{j \in \N}$ a $\PP$-Rademacher family. Let $s \in (0,\infty)$. A Banach space $(E,\norm{\cdot}_{E})$ is said to be of type $s$ if there is a constant $C$ such that for all finite sequences $(x_j)$ in $E$,
	\[
	\E\left[\norm{\sum_{j}r_j x_j}^s_E\right]^{\frac{1}{s}} \leq C \left(\sum_{j} \norm{x_j}_{E}^{s}\right)^{\frac{1}{s}}.
	\]
	The supremum of the constants $C$ is called the type $s$-constant of $E$ and it is denoted as $\mathscr{T}_s(E)$.
\end{defn}

\begin{rem}
	The existence of a finite constant $C$ in Definition \ref{def:stype} is valid for $s \leq 2$ only (see, e.g. \cite{ProbBan} Section 9 for more details).
\end{rem}

\begin{rem}
	Any Banach space $(E,\norm{\cdot}_E)$ is of type $1$. Moreover, triangle inequality ensures that $\mathscr{T}_1(E)=1$.
\end{rem}
\begin{rem}
	Notice that for all Banach spaces $(E,\norm{\cdot}_E)$, the function $(0,\infty) \ni s\to \mathscr{T}_s(E) \in [0,\infty]$ is non-decreasing. This implies for all $s \in (0,1]$ and all Banach spaces $(E,\norm{\cdot}_E)$ with $E \neq \{0\}$ that $\mathscr{T}_s(E) = 1$.
\end{rem}
\begin{rem}
	For all $s \in (0,2]$ and all Hilbert spaces $(H,\langle\cdot,\cdot \rangle_H, \norm{\cdot}_H)$ with $H \neq \{0\}$ it holds that $\mathscr{T}_s(H) = 1$.
\end{rem}
\begin{defn}
	Let $(r_j)_{j\in \N}$ a $\PP$-Rademacher family. Let $q,s \in (0,\infty)$ and $(E,\norm{\cdot}_E)$ be a Banach space. The $\mathscr{K}_{q,s}$ $(q,s)$-Kahane-Khintchine constant of the space $E$ is the extended real number given by the supremum of a constant $C$ such that for all finite sequences $(x_j)$ in $E$,
	\[
		\E\left[\norm{\sum_{j}r_j x_j}^q_E\right]^{\frac{1}{q}} \leq C\,\E\left[\norm{\sum_{j}r_j x_j}^s_E\right]^{\frac{1}{s}}
	\]
	
\end{defn}
\begin{rem}
	For all $q,s \in (0,\infty)$ it holds that $\mathscr{K}_{q,s} < \infty$. Moreover, if $q\leq s$ by Jensen's inequality implies that $\mathscr{K}_{q,s} = 1$.
\end{rem}
\begin{defn}
	Let $q,s \in (0,\infty)$ and let $(E,\norm{\cdot}_E)$ be a Banach space. Then we denote by $\Theta_{q,s}(E) \in [0,\infty]$ the extended real number given by
	\[
	\Theta_{q,s}(E) = 2 \mathscr{T}_{s}(E)\mathscr{K}_{q,s}.
	\]
\end{defn}
Consider the case $(\R,|\cdot|)$. Notice that $(\R,\langle \cdot,\cdot \rangle_{\R}, |\cdot|)$ is a Hilbert space with the inner product $\langle x,y \rangle_{\R} = xy$. Then it holds for all $s \in (0,2]$ that
\[
\mathscr{T}_{s} := \mathscr{T}_{s}(\R) = 1,
\]
in other words, $(\R,|\cdot|)$ has type $s$ for all $s \in (0,2]$. Moreover, it holds for all $q \in (0,\infty)$, $s \in (0,2]$ that
\[
\Theta_{q,s} := \Theta_{q,s}(\R) = 2\mathscr{K}_{q,s} < \infty.
\]
With this in mind, we enunciate a particular version of the Corollary 5.12 found in \cite{Hutz2}, replacing the Banach space $(E,\norm{\cdot}_E)$ by $(\R,|\cdot|)$.
\begin{cor}\label{cor:MCq}
	Let $M \in \N$, $s \in [1,2]$, $(\Omega,\mathcal{F},\PP)$ be a probability space, and let $\xi_j \in L^1(\PP,|\cdot|)$, $j \in \{1,...,M\}$, be independent and identically distributed. Then, for all $q \in [s,\infty]$,
	\begin{equation}\label{eq:MCq}
		\begin{aligned}
		\norm{\E[\xi_1] - \frac{1}{M} \sum_{j=1}^{M} \xi_j}_{L^{q}(\Omega,\PP)} &= \frac{1}{M} \E\left[\left|\sum_{j=1}^{M}\xi_j - \E \left[\sum_{j=1}^{M} \xi_j\right]\right|^{q}\right]^{\frac{1}{q}}\\
		&\leq \frac{\Theta_{q,s}}{M^{1-\frac{1}{s}}} \E\left[\left|\xi_1 - \E[\xi_1]\right|^{q}\right]^{\frac{1}{q}}.
		\end{aligned}
	\end{equation}
\end{cor}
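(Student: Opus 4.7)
The equality in the stated chain is purely algebraic: by linearity of expectation, $\E[\xi_1] = \frac{1}{M}\,\E\bigl[\sum_{j=1}^M \xi_j\bigr]$, and pulling the constant $1/M$ out of the $L^q$-norm yields the identity. The substantive content is the inequality, and the natural route is the classical symmetrization plus type-$s$ argument which specializes Corollary 5.12 of \cite{Hutz2} from a general Banach space $E$ to $E=\R$.

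Set $\eta_j := \xi_j - \E[\xi_j]$; these are i.i.d., centered, and (assuming $\xi_1 \in L^q(\PP)$, else the right-hand side is $+\infty$ and the bound is vacuous) lie in $L^q(\PP)$. After factoring out the $1/M$, the target reduces to
\[
\E\bigl[\bigl|\textstyle\sum_{j=1}^M \eta_j\bigr|^q\bigr]^{1/q} \leq \Theta_{q,s}\, M^{1/s}\, \E[|\eta_1|^q]^{1/q}.
\]
I would then proceed in four steps, each absorbing one factor of $\Theta_{q,s} = 2\,\mathscr{T}_s\,\mathscr{K}_{q,s}$ or producing the scaling $M^{1/s}$.

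\emph{Step 1 (symmetrization).} Introduce an independent i.i.d.\ copy $(\eta_j')$ and an independent $\PP$-Rademacher family $(r_j)$. Conditioning on $(\eta_j)$ and using $\E[\eta_j'] = 0$ via Jensen gives $\E[|\sum \eta_j|^q] \leq \E[|\sum(\eta_j - \eta_j')|^q]$; symmetry of $\eta_j - \eta_j'$ shows it has the same law as $r_j(\eta_j - \eta_j')$, so the triangle inequality in $L^q$ yields $\E[|\sum \eta_j|^q]^{1/q} \leq 2\,\E[|\sum r_j \eta_j|^q]^{1/q}$, giving the factor $2$ in $\Theta_{q,s}$. \emph{Step 2 (Kahane--Khintchine).} Conditioning on $(\eta_j)$, the definition of $\mathscr{K}_{q,s}$ applied to the Rademacher sum, combined with Fubini, produces
\[
\E\bigl[\bigl|\textstyle\sum r_j \eta_j\bigr|^q\bigr]^{1/q} \leq \mathscr{K}_{q,s}\, \E_\eta\!\bigl[\bigl(\E_r\bigl[|\textstyle\sum r_j \eta_j|^s\bigr]\bigr)^{q/s}\bigr]^{1/q}.
\]
\emph{Step 3 (type $s$).} For each frozen realization of $(\eta_j)$, the inner Rademacher expectation is bounded deterministically by $\mathscr{T}_s\,(\sum |\eta_j|^s)^{1/s}$, and the identity $\mathscr{T}_s(\R) = 1$ for $s \in [1,2]$ keeps this factor harmless. \emph{Step 4 (Minkowski in $L^{q/s}$).} Since $q/s \geq 1$, Minkowski's inequality gives $\|\sum |\eta_j|^s\|_{L^{q/s}(\PP)} \leq \sum \||\eta_j|^s\|_{L^{q/s}(\PP)} = M\,\|\eta_1\|_{L^q}^s$, so $\E[(\sum |\eta_j|^s)^{q/s}]^{1/q} \leq M^{1/s}\,\|\eta_1\|_{L^q}$. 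Chaining the four steps produces exactly $2\,\mathscr{K}_{q,s}\,\mathscr{T}_s\,M^{1/s}\,\|\eta_1\|_{L^q} = \Theta_{q,s}\,M^{1/s}\,\|\eta_1\|_{L^q}$.

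The main technical hurdle is Step 1: one must justify the symmetrization carefully (centering of the i.i.d.\ copy, conditional Jensen, symmetry under sign flips) and handle the endpoint $q = \infty$ either by a monotone limit $q \to \infty$ or by a direct $L^\infty$ argument. The remaining steps are essentially bookkeeping of Fubini swaps and of the fact that $|\eta_1|^s \in L^{q/s}(\PP)$ whenever $\eta_1 \in L^q(\PP)$.
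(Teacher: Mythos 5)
Your proof is correct. Note that the paper does not actually supply a proof here: it states the corollary as a specialization of Corollary~5.12 of \cite{Hutz2} with $(E,\|\cdot\|_E)=(\R,|\cdot|)$, and leaves the argument to that reference. The chain you reconstruct — symmetrization by an independent copy and a Rademacher family (factor $2$), conditional Kahane--Khintchine (factor $\mathscr{K}_{q,s}$), the type-$s$ inequality applied to the frozen Rademacher sum (factor $\mathscr{T}_s$, equal to $1$ on $\R$), and Minkowski in $L^{q/s}$ to extract $M^{1/s}$ — is precisely the standard type-$s$ Banach-space argument underlying the cited corollary, so the two proofs are essentially the same. One small caveat: the endpoint $q=\infty$ you flag is not rescued by a monotone limit, since $\mathscr{K}_{q,s}\to\infty$ as $q\to\infty$ (the Khintchine constants are uniform in $M$ only for finite $q$); but the paper's own remark confirms $\mathscr{K}_{q,s}<\infty$ only for $q\in(0,\infty)$, and every use downstream has $q<2$, so this is immaterial.
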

\begin{rem}
	The choice of the Banach space as $(\R,|\cdot|)$ ensures that $\Theta_{q,s}$ is finite and the bound above converges for suitable $M$ large.
\end{rem}

\section{Deep Neural Networks}\label{Sect:3}

In this section we review recent results on the mathematical analysis of neural networks needed for the proof of the main theorem. For a detailed description, see e.g. \cite{Grohs2,Hutz,Jentz1}.

\subsection{Setting} for $d \in \N$ define 
\[
A_d : \R^d \rightarrow \R^d
\] 
the ReLU activation function such that for all $z \in \R^d$, $z=(z_1,...,z_d)$, with
\[
A_d(z)=(\max\{z_1,0\},...,\max\{z_d,0\}).
\] 
Let also
\begin{enumerate}
\item[(NN1)] $H \in \N$ be the number of hidden layers; 
\item[(NN2)] $(k_i)_{i=0}^{H+1}$ be a positive integer sequence; 
\item[(NN3)] $W_{i} \in \R^{k_i \times k_{i-1}}$, $B_{i} \in \R^{k_i}$, for any $i=1,...,H+1$ be the weights and biases, respectively;
\item[(NN4)] $x_0 \in \R^{k_0}$, and for $i = 1,...,H$ let 
\begin{equation}\label{x_i}
x_i = A_{k_i}(W_ix_{i-1}+B_i).
\end{equation}
\end{enumerate}
We call 
\begin{equation}\label{eq:DNN_def}
\Phi := (W_i,B_i)_{i=1}^{H+1} \in \prod_{i=1}^{H+1} \left(\R^{k_i \times k_{i-1}} \times \R^{k_i}\right)
\end{equation}
the DNN associated to the parameters in (NN1)-(NN4). The space of all DNNs in the sense of \eqref{eq:DNN_def} is going to be denoted by $\textbf{N}$, namely
\[
\textbf{N} = \bigcup_{H \in \N} \bigcup_{(k_0,...,k_{H+1})\in \N^{H+2}} \left[\prod_{i=1}^{H+1} \left(\R^{k_i \times k_{i-1}} \times \R^{k_i}\right)\right].
\]
Define the realization of the DNN $\Phi \in \textbf{N}$ as
\begin{equation}\label{Realization}
	\mathcal{R}(\Phi)(x_0) =  W_{H+1}x_H + B_{H+1}.
\end{equation}
Notice that $\mathcal{R}(\Phi) \in C(\R^{k_0},\R^{k_{H+1}})$. For any $\Phi \in \textbf{N}$ define
\begin{equation}\label{P_D}
	\mathcal{P}(\Phi) = \sum_{n	=1}^{H+1} k_n (k_{n-1}+1), \qquad \mathcal{D}(\Phi) = (k_0,k_1,...,k_{H+1}),
\end{equation}
and
\begin{equation}\label{norma}
\vertiii{  \mathcal{D}(\Phi )  } = \max\{ k_0,k_1,...,k_{H+1} \}.
\end{equation}
The entries of $(W_i,B_i)_{i=1}^{H+1}$ will be the weights of the DNN, $\mathcal{P}(\Phi)$ represents the number of parameters used to describe the DNN, working always with fully connected DNNs, and $\mathcal{D}(\Phi)$ representes the dimension of each layer of the DNN. Notice that $\Phi \in \textbf{N}$ has $H+2$ layers: $H$ of them hidden, one input and one output layer.
\begin{rem}
	For $\Phi \in \textbf{N}$ one has
	\[
	\vertiii{\mathcal{D}(\Phi)} \leq \mathcal{P}(\Phi) \leq (H+1)\vertiii{\mathcal{D}(\Phi)} (\vertiii{\mathcal{D}(\Phi)}+1).
	\]
	Indeed, from the definition of $\vertiii{\cdot}$,
	\[
	\vertiii{\mathcal{D}(\Phi)} \leq \sum_{n=1}^{H+1} k_n \leq \mathcal{P}(\Phi).
	\]
	In addition, the definition of $\mathcal{P}(\Phi)$ implies that
	\[
	\mathcal{P}(\Phi) \leq \sum_{n=1}^{H+1} \vertiii{\mathcal{D}(\Phi)}(\vertiii{\mathcal{D}(\Phi)}+1) = (H+1)\vertiii{\mathcal{D}(\Phi)}(\vertiii{\mathcal{D}(\Phi)}+1).
	\]
\end{rem}

\begin{rem}\label{rem:CoD}
	From the previous remark one has
	\[
	\mathcal{P}(\Phi) \leq 2(H+1)\vertiii{\mathcal{D}(\Phi)}.
	\]
	If $\vertiii{\mathcal{D}(\Phi)}$ grows at most polynomially in both the dimension of the input layer and the reciprocal of the accuracy $\varepsilon$ of the DNN, then $\mathcal{P}(\Phi)$ satisfies that bound too. This means that, with the right bound on $\vertiii{\mathcal{D}(\Phi)}$, the DNN $\Phi$ do not suffer the curse of dimensionality, in the sense established in Section \ref{Sect:1}.
\end{rem}

\subsection{Operations} In this section we summarize some operations between DNNs. We start with the definition of two vector operators 
\begin{defn}
	Let $\textbf{D} = \bigcup_{H \in \N} \N^{H+2}$. 
	\begin{enumerate}
		\item Define $\odot: \textbf{D} \times \textbf{D} \rightarrow \textbf{D}$ such that for all $H_1,H_2 \in \N$, $\alpha = (\alpha_0,...,\alpha_{H_1+1}) \in \N^{H_1+2}$, $\beta = (\beta_0,...,\beta_{H_2+1}) \in \N^{H_2+2}$ it satisfied
		\begin{equation}
			\alpha \odot \beta = (\beta_0,\beta_1,...,\beta_{H_2},\beta_{H_2+1}+\alpha_0,\alpha_1,...,\alpha_{H_1+1}) \in \N^{H_1+H_2+3}.
		\end{equation}
		\smallskip
		\item Define $\boxplus: \textbf{D} \times \textbf{D} \rightarrow \textbf{D}$ such that for all $H \in \N$, $\alpha = (\alpha_0,...,\alpha_{H+1}) \in \N^{H+2}$, $\beta = (\beta_0,...,\beta_{H+1}) \in \N^{H+2}$ it satisfied
		\begin{equation}
			\alpha \boxplus \beta = (\alpha_0, \alpha_1 + \beta_1,...,\alpha_H + \beta_H, \beta_{H+1}) \in \N^{H+2}.
		\end{equation}
		\smallskip
		\item Define $\mathfrak{n}_n \in \textbf{D}$, $n \in \N$, $n \geq 3$ as
		\begin{equation}
			\mathfrak{n}_n = (1,\underbrace{2,\ldots,2}_{(n-2)\hbox{-times}},1) \in \N^n.
		\end{equation} 
	\end{enumerate}
\end{defn}

\begin{rem}
	From these definitions and the norm $\vertiii{\cdot}$ defined in \eqref{norma}, we the following bounds are clear
	\begin{enumerate}
		\item For $H_1, H_2 \in \N$, $\alpha \in \N^{H_1 + 2}$ and $\beta \in \N^{H_2 + 2}$,
		\[
		\vertiii{\alpha \odot \beta} \leq \max\{\vertiii{\alpha},\vertiii{\beta},\alpha_0 + \beta_{H_2 + 1}\}.
		\]
		\item For $H \in \N$ and $\alpha,\beta \in \N^{H+2}$,
		\[
		\vertiii{\alpha \boxplus \beta} \leq \vertiii{\alpha} + \vertiii{\beta}.
		\]
		\item For $n \in \N$, $n \geq 3$, $\vertiii{\mathfrak{n}_n} = 2$.
	\end{enumerate}
\end{rem}

Now we state classical operations between DNNs. For a full details of the next Lemmas, the reader can consult e.g. \cite{Hutz,Jentz1}.

\begin{lema}\label{lem:DNN_id}
	Let $Id_{\R} : \R \rightarrow \R$ be the identity function on $\R$ and let $H \in \N$. Then $Id_{\R} \in \mathcal{R}\left(\{\Phi \in \textbf{N}: \mathcal{D}(\Phi)=\mathfrak{n}_{H+2}\}\right)$.
\end{lema}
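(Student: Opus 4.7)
The plan is to implement the standard ``skip-connection via ReLU decomposition'' trick: exploit the elementary identity $x = \max(x,0) - \max(-x,0)$ for all $x \in \R$, which expresses the identity on $\R$ as an affine combination of two ReLU activations. Since the target architecture $\mathcal{D}(\Phi) = \mathfrak{n}_{H+2} = (1,2,\ldots,2,1)$ has input dimension $1$, $H$ hidden layers each of width $2$, and output dimension $1$, this decomposition matches exactly what the width-$2$ hidden layers can carry.

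Concretely, I would choose weights and biases as follows. Set
\[
W_1 = \begin{pmatrix} 1 \\ -1 \end{pmatrix} \in \R^{2 \times 1}, \qquad B_1 = \begin{pmatrix} 0 \\ 0 \end{pmatrix} \in \R^2,
\]
so that for $x_0 = x \in \R$ one has $x_1 = A_2(W_1 x + B_1) = (\max(x,0), \max(-x,0))$. For the intermediate hidden layers $i = 2, \ldots, H$, set $W_i = I_2$ (the $2 \times 2$ identity) and $B_i = 0$; since $x_{i-1}$ already has non-negative components, $A_2(I_2 x_{i-1}) = x_{i-1}$, so these layers transport the pair unchanged. Finally, for the output layer take
\[
W_{H+1} = \begin{pmatrix} 1 & -1 \end{pmatrix} \in \R^{1 \times 2}, \qquad B_{H+1} = 0,
\]
so that $\mathcal{R}(\Phi)(x) = W_{H+1} x_H + B_{H+1} = \max(x,0) - \max(-x,0) = x$ for every $x \in \R$, by the identity above.

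With this choice $\Phi = (W_i, B_i)_{i=1}^{H+1}$ lies in $\mathbf{N}$ and, by construction, satisfies $\mathcal{D}(\Phi) = (1,2,\ldots,2,1) = \mathfrak{n}_{H+2}$, while $\mathcal{R}(\Phi) = Id_{\R}$. There is no real obstacle here; the only point requiring attention is the special case $H = 1$, in which there are no intermediate hidden layers and the argument reduces to the composition of $W_1, B_1$ with $W_2, B_2$ directly — which still produces the identity by the same two-ReLU decomposition. Thus $Id_{\R} \in \mathcal{R}\bigl(\{\Phi \in \mathbf{N} : \mathcal{D}(\Phi) = \mathfrak{n}_{H+2}\}\bigr)$ for every $H \in \N$, as claimed.
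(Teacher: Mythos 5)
Your proof is correct and is the standard argument (the paper itself does not include a proof of this lemma but defers to the references, e.g.\ Hutzenthaler et al.\ and Jentzen et al., which use exactly the same two-ReLU decomposition $x = \max(x,0) - \max(-x,0)$ with pass-through identity weights on the intermediate width-$2$ hidden layers). One small remark: the paper only defines $\mathfrak{n}_n$ for $n \geq 3$, so the base case is $H=1$ (a single hidden layer, no intermediate layers), which you already flag explicitly; there the construction reduces to $W_1 = \begin{psmallmatrix}1\\-1\end{psmallmatrix}$, $B_1 = 0$, $W_2 = (1\ {-1})$, $B_2 = 0$, and the induction on intermediate layers is vacuous — so your treatment is complete.
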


\begin{rem}
	A similar consequence is valid in $\R^d$. Let $Id_{\R^d} : \R^d \to \R^d$ be the identity function on $\R^d$ and let $H \in \N$. Therefore $Id_{\R^d} \in \mathcal{R}\left(\{\Phi \in \textbf{N}: \mathcal{D}(\Phi)=d\mathfrak{n}_{H+2}\}\right)$. The case with $H=3$ is proved on \cite{Jentz1}.
\end{rem}

\begin{rem}
	Let $H \in \N$ and $\Phi \in \textbf{N}$ such that $\mathcal{R}(\Phi) = Id_{\R^d}$. Then by Remark 3.3 we have that $\vertiii{\mathcal{D}(\Phi)} = 2d$.
\end{rem}


\begin{lema}\label{lem:DNN_comp}
	Let $d_1,d_2,d_3 \in \N$, $f \in C(\R^{d_2},\R^{d_3})$, $g \in C(\R^{d_1},\R^{d_2})$, $\alpha,\beta \in \textbf{D}$ such that $f \in \mathcal{R}(\{\Phi \in \textbf{N}: \mathcal{D}(\Phi) = \alpha \})$ and $g \in \mathcal{R}(\{\Phi \in \textbf{N}: \mathcal{D}(\Phi) = \beta \})$. Therefore $(f \circ g) \in \mathcal{R}(\{\Phi \in \textbf{N}: \mathcal{D}(\Phi) = \alpha \odot \beta\})$.
\end{lema}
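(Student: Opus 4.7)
My plan is to give an explicit construction of a DNN whose realization is $f\circ g$ and whose dimension vector is exactly $\alpha\odot\beta$. Let $\Phi^g=((W^g_i,B^g_i))_{i=1}^{H_2+1}$ with $\mathcal{D}(\Phi^g)=\beta=(\beta_0,\dots,\beta_{H_2+1})$ and $\Phi^f=((W^f_i,B^f_i))_{i=1}^{H_1+1}$ with $\mathcal{D}(\Phi^f)=\alpha=(\alpha_0,\dots,\alpha_{H_1+1})$. Since $f\circ g$ is well defined, the output dimension of $g$ and the input dimension of $f$ agree: $\beta_{H_2+1}=d_2=\alpha_0$. The key subtlety (this is the one point that is not entirely routine) is that the output layer of $\Phi^g$ is \emph{affine}, not ReLU-activated, so one cannot simply concatenate the two networks and hand $g(x_0)$ directly to the first hidden layer of $\Phi^f$; an activation will be applied in between. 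The standard way around this is the identity $y = A_1(y)-A_1(-y)$ valid for every $y\in\R$, which lets us transport the affine value through one extra ReLU layer at the cost of doubling its width.

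Concretely, I would define $\Phi=((W_i,B_i))_{i=1}^{H_1+H_2+2}$ with $H:=H_1+H_2+1$ hidden layers as follows. For $i=1,\dots,H_2$ set $(W_i,B_i)=(W^g_i,B^g_i)$, so the first $H_2$ hidden layers of $\Phi$ coincide with those of $\Phi^g$ and produce the vector $x_{H_2}\in\R^{\beta_{H_2}}$ that $\Phi^g$ would produce. At layer $i=H_2+1$, which is the connecting layer of width $\beta_{H_2+1}+\alpha_0=2\alpha_0$, take
\begin{equation*}
W_{H_2+1}=\begin{pmatrix} W^g_{H_2+1}\\ -W^g_{H_2+1}\end{pmatrix},\qquad B_{H_2+1}=\begin{pmatrix} B^g_{H_2+1}\\ -B^g_{H_2+1}\end{pmatrix},
\end{equation*}
so that after applying $A_{2\alpha_0}$ one obtains $\bigl(A_{\alpha_0}(g(x_0)),\,A_{\alpha_0}(-g(x_0))\bigr)$. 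At layer $i=H_2+2$ set
\begin{equation*}
W_{H_2+2}=\bigl(W^f_1,\,-W^f_1\bigr),\qquad B_{H_2+2}=B^f_1;
\end{equation*}
then the affine preactivation at this layer equals $W^f_1\bigl(A_{\alpha_0}(g(x_0))-A_{\alpha_0}(-g(x_0))\bigr)+B^f_1=W^f_1 g(x_0)+B^f_1$, which is exactly the preactivation of the first hidden layer of $\Phi^f$ applied to input $g(x_0)$. Finally, for $i=H_2+3,\dots,H_1+H_2+2$ set $(W_i,B_i)=(W^f_{i-H_2-1},B^f_{i-H_2-1})$, inheriting the remaining layers of $\Phi^f$ (including its affine output layer).

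By induction on the depth, an immediate check of \eqref{x_i} and \eqref{Realization} then shows $\mathcal{R}(\Phi)(x_0)=\mathcal{R}(\Phi^f)\bigl(\mathcal{R}(\Phi^g)(x_0)\bigr)=f(g(x_0))$ for every $x_0\in\R^{d_1}$. Reading off the widths layer by layer yields
\begin{equation*}
\mathcal{D}(\Phi)=\bigl(\beta_0,\beta_1,\dots,\beta_{H_2},\beta_{H_2+1}+\alpha_0,\alpha_1,\dots,\alpha_{H_1+1}\bigr)=\alpha\odot\beta,
\end{equation*}
which completes the proof. The only conceptually non-routine step is the doubling trick at the junction, needed precisely because ReLU is not the identity; everything else is bookkeeping about matrix blocks and indices. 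Note also that this construction places the DNN squarely in the class $\{\Phi\in\mathbf{N}:\mathcal{D}(\Phi)=\alpha\odot\beta\}$ as required.
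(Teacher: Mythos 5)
Your construction is correct and is essentially the standard one from the references (\cite{Hutz,Jentz1}) that the paper cites for this lemma without reproducing a proof: concatenate the two networks, and handle the affine output layer of $\Phi^g$ meeting the ReLU at the junction by the identity $y = A(y) - A(-y)$, which doubles the width there to $\beta_{H_2+1}+\alpha_0 = 2d_2$ and folds the sign-splitting back into the first weight matrix of $\Phi^f$. The block matrices you write down do exactly this, and the resulting dimension vector reads off as $\alpha\odot\beta$, so there is nothing to add.
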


\begin{rem}
	Let $\Phi_f, \Phi_g, \Phi \in \N$ such that $\mathcal{R}(\Phi_f) = f$, $\mathcal{R}(\Phi_g) = g$ and $\mathcal{R}(\Phi) = f \circ g$. Then by Remark 3.3 it follows that
	\[
	\vertiii{\mathcal{D}(\Phi)} \leq \max\{\vertiii{\mathcal{D}(\Phi_f)},\vertiii{\mathcal{D}(\Phi_g)},2d_2\}.
	\]
\end{rem}

\begin{lema}\label{lem:DNN_sum}
	Let $M,H,p,q \in \N$, $h_i \in \R$, $\beta_i \in \textbf{D}$, $f_i \in C(\R^p,\R^q)$, $i=1,...,M$ such that for all $i=1,...,M$ $\dim(\beta_i) = H+2$ and $f_i \in \mathcal{R}(\{\Phi \in \textbf{N}: \mathcal{D}(\Phi) = \beta_i\})$. Then
	\begin{equation}
		\sum_{i=1}^{M} h_i f_i \in \mathcal{R}\left(\left\{\Phi \in \textbf{N}: \mathcal{D}(\Phi) = \overset{M}{\underset{i=1}{\boxplus}} \beta_i \right\}\right).
	\end{equation}
\end{lema}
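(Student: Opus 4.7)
The plan is to prove Lemma~\ref{lem:DNN_sum} by the standard \emph{parallelization} construction for DNNs of equal depth. For each $i \in \{1,\dots,M\}$, pick a DNN $\Phi_i = (W^{(i)}_n, B^{(i)}_n)_{n=1}^{H+1}$ with $\mathcal{D}(\Phi_i) = \beta_i = (\beta_{i,0}, \beta_{i,1}, \dots, \beta_{i,H}, \beta_{i,H+1})$ such that $\mathcal{R}(\Phi_i) = f_i$. Since $f_i \in C(\R^p, \R^q)$, we have $\beta_{i,0} = p$ and $\beta_{i,H+1} = q$ for every $i$. I would then assemble a single DNN $\Phi = (W_n, B_n)_{n=1}^{H+1}$ whose dimensions agree with $\boxplus_{i=1}^M \beta_i = (p, \sum_i \beta_{i,1}, \dots, \sum_i \beta_{i,H}, q)$.

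The construction of the weights is layer by layer. For the input layer, set
\[
W_1 = \begin{pmatrix} W^{(1)}_1 \\ W^{(2)}_1 \\ \vdots \\ W^{(M)}_1 \end{pmatrix}, \qquad B_1 = \begin{pmatrix} B^{(1)}_1 \\ B^{(2)}_1 \\ \vdots \\ B^{(M)}_1 \end{pmatrix},
\]
so that a single input $x_0 \in \R^p$ is fed to all $M$ subnetworks at once. For each intermediate layer $n \in \{2,\dots,H\}$, take $W_n$ to be block-diagonal and $B_n$ to be the concatenation of the corresponding biases:
\[
W_n = \operatorname{diag}\bigl(W^{(1)}_n, \dots, W^{(M)}_n\bigr), \qquad B_n = \bigl(B^{(1)}_n, \dots, B^{(M)}_n\bigr)^{\top}.
\]
Finally, at the output layer, aggregate the $M$ outputs using the coefficients $h_i$ by setting
\[
W_{H+1} = \bigl(h_1 W^{(1)}_{H+1} \;\big|\; h_2 W^{(2)}_{H+1} \;\big|\; \cdots \;\big|\; h_M W^{(M)}_{H+1}\bigr), \qquad B_{H+1} = \sum_{i=1}^M h_i B^{(i)}_{H+1}.
\]

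Next, I would verify by induction on $n \in \{1,\dots,H\}$ that the activation $x_n$ of $\Phi$ defined by \eqref{x_i} equals the vertical concatenation $(x_n^{(1)}, \dots, x_n^{(M)})^{\top}$, where $x_n^{(i)}$ is the $n$-th activation of $\Phi_i$ on input $x_0$. The base case $n=1$ is immediate from the stacked form of $W_1, B_1$ and the component-wise action of the ReLU $A_{k_1}$ (which acts coordinate-wise, so it commutes with the splitting into blocks). The inductive step uses the block-diagonal structure of $W_n$ together with the component-wise action of $A_{k_n}$, which together preserve the block decomposition. Applying the output layer then yields
\[
\mathcal{R}(\Phi)(x_0) = W_{H+1} x_H + B_{H+1} = \sum_{i=1}^M h_i \bigl(W^{(i)}_{H+1} x^{(i)}_H + B^{(i)}_{H+1}\bigr) = \sum_{i=1}^M h_i f_i(x_0),
\]
as required. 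A direct tally of the constructed layers shows $\mathcal{D}(\Phi) = \boxplus_{i=1}^M \beta_i$, concluding the proof.

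The argument is essentially bookkeeping and involves no deep analytic input; the main obstacle is simply keeping indices consistent, in particular making sure that the asymmetric convention in the definition of $\boxplus$ (first component from $\alpha$, last from $\beta$) is compatible with the hypothesis that all $\beta_i$ share the same input dimension $p$ and output dimension $q$, so that iterating $\boxplus$ yields the claimed architecture. One small care point is that the ReLU activation $A_{k_n}$ on the concatenated space acts as $A_{\beta_{1,n}} \times \cdots \times A_{\beta_{M,n}}$ coordinate-wise, which is exactly what makes the block decomposition propagate through the hidden layers.
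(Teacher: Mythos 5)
Your construction is correct: stacking the first-layer weights, making the hidden layers block-diagonal, and horizontally concatenating the scaled output weights while summing the scaled output biases is precisely the standard parallelization argument, and your induction correctly uses the coordinate-wise action of the ReLU to propagate the block structure. The paper does not actually supply a proof of this lemma (it cites \cite{Hutz,Jentz1} for details), and your argument matches the standard one found there, including the correct bookkeeping that $\boxplus_{i=1}^M\beta_i=(p,\sum_i\beta_{i,1},\dots,\sum_i\beta_{i,H},q)$ under the shared input/output dimensions.
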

\begin{rem}
	For $i=1,...,M$ let $\Phi_i \in \textbf{N}$ such that $\mathcal{R}(\Phi_i)=f_i$ and let $\Phi \in \textbf{N}$ such that
	\[
	\mathcal{R}(\Phi) = \sum_{i=1}^{M} h_i f_i.
	\]
	It follows from Remark 3.3 that
	\[
	\vertiii{\mathcal{D}(\Phi)} \leq \sum_{i=1}^{M} \vertiii{\mathcal{D}(\Phi_i)}.
	\]
\end{rem}


The following Lemma comes from \cite{Elb} and is adapted to our notation.

\begin{lema}\label{lem:DNN_para}
	Let $H,d,d_i \in \N$, $\beta_i \in \textbf{D}$, $f_i \in C(\R^d,\R^{d_i})$, $i=1,2$ such that for $i=1,2$ $\dim(\beta_i) = H+2$ and $f_i \in \mathcal{R}\left(\left\{ \Phi \in \textbf{N}: \mathcal{D}(\Phi) = \beta_i\right\}\right)$. Then
	\begin{equation}
		(f_1,f_2) \in \mathcal{R}\left(\left\{\Phi \in \textbf{N}: \mathcal{D}(\Phi) = (d,\beta_{1,1}+\beta_{2,1},...,\beta_{1,H+1}+\beta_{2,H+1}) \right\}\right).
	\end{equation}
\end{lema}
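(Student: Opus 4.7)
The plan is to build the parallel DNN explicitly by combining the weights and biases of $\Phi_1$ and $\Phi_2$ in a block manner that exploits the componentwise action of the ReLU activation. Because both networks share the input dimension $d$ and the same number of hidden layers $H$, one can align them layer by layer: the first layer is stacked vertically so that both branches read the common input $x_0$, while every subsequent layer is assembled block-diagonally so that the two branches evolve independently without interference.

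Concretely, write $\Phi_i=((W_j^{(i)},B_j^{(i)}))_{j=1}^{H+1}$ with $\mathcal{R}(\Phi_i)=f_i$ and $\mathcal{D}(\Phi_i)=(d,\beta_{i,1},\ldots,\beta_{i,H+1})$ for $i=1,2$. I would define the merged parameters by
\[
\widetilde{W}_1 = \begin{pmatrix} W_1^{(1)} \\ W_1^{(2)} \end{pmatrix}, \qquad \widetilde{B}_1 = \begin{pmatrix} B_1^{(1)} \\ B_1^{(2)} \end{pmatrix},
\]
and, for $j=2,\ldots,H+1$,
\[
\widetilde{W}_j = \begin{pmatrix} W_j^{(1)} & 0 \\ 0 & W_j^{(2)} \end{pmatrix}, \qquad \widetilde{B}_j = \begin{pmatrix} B_j^{(1)} \\ B_j^{(2)} \end{pmatrix}.
\]
Setting $\Phi=((\widetilde{W}_j,\widetilde{B}_j))_{j=1}^{H+1}$, one reads off immediately that $\mathcal{D}(\Phi) = (d,\beta_{1,1}+\beta_{2,1},\ldots,\beta_{1,H+1}+\beta_{2,H+1})$, which is precisely the target dimension vector.

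The main step is then an induction on the layer index showing that the activation of $\Phi$ at level $j$ is the concatenation of the activations at level $j$ of $\Phi_1$ and $\Phi_2$. For $j=1$, the definition \eqref{x_i} yields
\[
\widetilde{x}_1 = A_{\beta_{1,1}+\beta_{2,1}}(\widetilde{W}_1 x_0 + \widetilde{B}_1) = \begin{pmatrix} A_{\beta_{1,1}}(W_1^{(1)} x_0 + B_1^{(1)}) \\ A_{\beta_{2,1}}(W_1^{(2)} x_0 + B_1^{(2)}) \end{pmatrix} = \begin{pmatrix} x_1^{(1)} \\ x_1^{(2)} \end{pmatrix},
\]
where the crucial point is that the ReLU activation $A_{k+l}$ acts componentwise, so it splits along the stacking. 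The inductive step at layer $j\geq 2$ is identical: the block-diagonal form of $\widetilde{W}_j$ decouples the two branches, the stacked bias matches, and componentwise activation preserves the stacked structure. Applying the purely linear output layer produces $\mathcal{R}(\Phi)(x_0) = W_{H+1}x_H + B_{H+1} = (f_1(x_0),f_2(x_0))$, which is the claim.

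There is no serious obstacle; the only delicate point is the bookkeeping of indices and block sizes, which works out thanks to the assumption $\dim(\beta_1)=\dim(\beta_2)=H+2$ that guarantees the two networks are depth-compatible. In particular no approximation argument is needed: the construction is exact, and the bound on $\vertiii{\mathcal{D}(\Phi)}$ follows trivially from $\vertiii{\mathcal{D}(\Phi)}\leq \vertiii{\mathcal{D}(\Phi_1)}+\vertiii{\mathcal{D}(\Phi_2)}$, matching the bounds collected after Lemma~\ref{lem:DNN_sum}.
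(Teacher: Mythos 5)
Your construction is correct and is the standard proof of the parallelization lemma: stack the first-layer weights so both branches read the common input, then proceed block-diagonally, relying on the componentwise action of ReLU to keep the two branches decoupled. The paper itself does not prove this lemma (it cites \cite{Elb}), but the cited reference uses exactly this block construction, so your argument matches the intended route; the only tiny slip is in the final display, where the output layer should read $\widetilde{W}_{H+1}\widetilde{x}_H+\widetilde{B}_{H+1}$ with the tilde-decorated block parameters rather than the undecorated $W_{H+1}x_H+B_{H+1}$.
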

\begin{rem}
	Let $\Phi_1, \Phi_2, \Phi \in \textbf{N}$ such that $\mathcal{R}(\Phi_i) = f_i$, $i=1,2$ and $\mathcal{R}(\Phi) = (f_1,f_2)$. Notice by Lemma \ref{lem:DNN_para} and definition of the norm $\vertiii{\cdot}$ in \eqref{norma} that
	\[
	\vertiii{\mathcal{D}(\Phi)} \leq \vertiii{\mathcal{D}(\Phi_1)} + \vertiii{\mathcal{D}(\Phi_2)}.
	\]
\end{rem}

For sake of completeness, we state the following lemma with his proof. We continue with the notation from \cite{Hutz}:

\begin{lema}\label{lem:DNN_mat}
	Let $H,p,q,r \in \N$, $M \in \R^{r\times q}$, $\alpha \in \textbf{D}$, $f \in C(\R^p,\R^q)$, such that $dim(\alpha)=H+2$ and $f \in \mathcal{R}(\{\Phi \in \textbf{N}: \mathcal{D}(\Phi)=\alpha\})$. Then
	\begin{equation}
		Mf \in \mathcal{R}\left(\left\{\Phi \in \textbf{N}: \mathcal{D}(\Phi)=(\alpha_0,...,\alpha_H,r)\right\}\right).
	\end{equation}
\end{lema}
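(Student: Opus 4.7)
The plan is to exploit the fact that the output layer of a DNN is affine (no activation is applied after the last weight-bias pair), so left-multiplying the final weight matrix and final bias by $M$ produces a new DNN whose realization is exactly $Mf$. Concretely, since $f \in \mathcal{R}(\{\Phi \in \textbf{N} : \mathcal{D}(\Phi) = \alpha\})$, pick any witness
\[
\Phi = ((W_1,B_1),\ldots,(W_{H+1},B_{H+1})) \in \textbf{N}, \qquad \mathcal{D}(\Phi) = \alpha = (\alpha_0,\ldots,\alpha_{H+1}),
\]
with $\alpha_0 = p$, $\alpha_{H+1} = q$, and $\mathcal{R}(\Phi) = f$. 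Define
\[
\Psi := ((W_1,B_1),\ldots,(W_H,B_H),(MW_{H+1},MB_{H+1})).
\]

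First I would check that $\Psi$ is a well-formed DNN with the desired dimension tuple. Since $W_{H+1} \in \R^{q \times \alpha_H}$ and $M \in \R^{r \times q}$, the product $MW_{H+1}$ lies in $\R^{r \times \alpha_H}$, and $MB_{H+1} \in \R^r$. The first $H$ weight-bias pairs are untouched, so $\mathcal{D}(\Psi) = (\alpha_0,\alpha_1,\ldots,\alpha_H,r)$, as claimed. In particular $\dim \mathcal{D}(\Psi) = H+2$, matching $\Phi$ in depth.

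Next I would verify the realization. For any input $x_0 \in \R^p$, the intermediate activations $x_1,\ldots,x_H$ computed in $\Psi$ via \eqref{x_i} coincide with those of $\Phi$, because the first $H$ pairs $(W_i,B_i)$ and the activation $A_{k_i}$ are identical. Applying the output formula \eqref{Realization} to $\Psi$,
\[
\mathcal{R}(\Psi)(x_0) = (MW_{H+1})x_H + MB_{H+1} = M\bigl(W_{H+1}x_H + B_{H+1}\bigr) = M\mathcal{R}(\Phi)(x_0) = Mf(x_0),
\]
so $Mf = \mathcal{R}(\Psi) \in \mathcal{R}(\{\Phi \in \textbf{N} : \mathcal{D}(\Phi) = (\alpha_0,\ldots,\alpha_H,r)\})$.

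There is no real obstacle here: the argument is purely algebraic and relies only on the fact that the last layer of the ReLU architecture is affine rather than nonlinear, so matrix multiplication can be absorbed into $(W_{H+1},B_{H+1})$ without altering the depth or the earlier layers. This is exactly why the depth of the new DNN equals that of $\Phi$ and only the width of the output layer changes from $q$ to $r$.
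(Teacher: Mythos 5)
Your proof is correct and uses exactly the same construction as the paper: absorb $M$ into the final affine layer by replacing $(W_{H+1},B_{H+1})$ with $(MW_{H+1},MB_{H+1})$, verify the dimension tuple becomes $(\alpha_0,\ldots,\alpha_H,r)$, and observe that the hidden activations are unchanged so the realization is $M\mathcal{R}(\Phi)$. No meaningful differences from the paper's argument.
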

\begin{proof}
	Let $H, \alpha_0, ..., \alpha_{H+1} \in \N$, $\Phi_f \in \textbf{N}$ satisfying that
	\[
	\alpha = \left(\alpha_0,...,\alpha_{H+1}\right), \qquad \mathcal{R}(\Phi_f) = f, \qquad \hbox{and} \qquad \mathcal{D}(\Phi_f) = \alpha.
	\]
	Note that $p = \alpha_0$ and $q = \alpha_{H+1}$. Let $\left((W_1,B_1),...,(W_{H+1},B_{H+1})\right) \in \prod_{n=1}^{H+1} \left(\R^{\alpha_n \times \alpha_{n-1}} \times \R^{\alpha_n}\right)$ satisfty that
	\[
	\Phi_f = \left((W_1,B_1),...,(W_{H+1},B_{H+1})\right).
	\]
	Let $M \in \R^{r \times \alpha_{H+1}}$ and define
	\[
	\Phi = \left((W_1,B_1),...,(W_{H},B_{H}),(MW_{H+1},MB_{H+1})\right).
	\]
	Notice that $(MW_{H+1},MB_{H+1}) \in \R^{r \times \alpha_{H}} \times \R^{r}$, then $\Phi \in \N$. For $y_0 \in \R^{\alpha_0}$, and $y_i$, $i=1,...,H$ defined as in (NN4) we have
	\[
	\left(\mathcal{R}(\Phi)\right)(y_0) = MW_{H+1}y_{H} + MB_{H+1} = M \left(W_{H+1}y_{H}+B_{H+1}\right) = M \left(\mathcal{R}(\Phi_f)\right)(y_0).
	\]
	Therefore
	\[
	\mathcal{R}(\Phi) = Mf, \qquad \hbox{and} \qquad \mathcal{D}(\Phi) = \left(\alpha_0,...,\alpha_{H},r\right), 
	\]
	and the Lemma is proved.
\end{proof}
\begin{rem}
	Let $\Phi_f, \Phi \in \textbf{N}$ such that $\mathcal{R}(\Phi_f) = f$ and $\mathcal{R}(\Phi) = Mf$. From previous Lemma and the definition of $\vertiii{\cdot}$ it follows that
	\[
	\vertiii{\mathcal{D}(\Phi)} \leq \max\{\vertiii{\mathcal{D}(\Phi_f)},r\}.
	\]
\end{rem}

The following Lemma is from \cite{Grohs}.
\begin{lema}\label{lem:DNN_mult}
	There exists constants $C_1,C_2,C_3,C_4 > 0$ such that for all $\kappa > 0$ and for all $\delta \in \left(0,\frac{1}{2}\right)$ there exists a ReLu DNN $\Upsilon \in \textbf{N}$, with $\mathcal{R}(\Upsilon) \in C(\R^2,\R)$ such that
	\begin{equation}
		\sup_{a,b \in [-\kappa,\kappa]} |ab-\left(\mathcal{R}(\Upsilon)\right)(a,b)| \leq \delta.
	\end{equation}
	Moreover, for all $\delta \in \left(0,\frac{1}{2}\right)$ ,
	\begin{align}
		\mathcal{P}(\Upsilon) &\leq C_1 \left( \log_2 \left(\frac{\max\{\kappa,1\}}{\delta}\right)\right) + C_2, \\
		\dim(\mathcal{D}(\Upsilon)) &\leq C_3 \left( \log_2 \left(\frac{\max\{\kappa,1\}}{\delta}\right)\right) + C_4.
	\end{align}
\end{lema}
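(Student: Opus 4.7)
The plan is to reduce the approximation of multiplication to the approximation of squaring via the polarization identity $ab = \tfrac{1}{4}\bigl((a+b)^2 - (a-b)^2\bigr)$, and to approximate $t \mapsto t^2$ on a bounded interval by Yarotsky's classical sawtooth construction. Throughout, I would assemble the final DNN $\Upsilon$ by invoking the operation lemmas already available: composition (Lemma \ref{lem:DNN_comp}), weighted sums (Lemma \ref{lem:DNN_sum}), parallelization (Lemma \ref{lem:DNN_para}), left matrix multiplication (Lemma \ref{lem:DNN_mat}), and the identity channel (Lemma \ref{lem:DNN_id}).

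First, I would use the tooth map $g:[0,1]\to[0,1]$ defined by $g(t)=2t$ for $t\le 1/2$ and $g(t)=2(1-t)$ for $t>1/2$. Since $g(t) = 2A_1(t) - 4A_1(t-\tfrac12)$ on $[0,1]$, the map $g$ is exactly representable by a ReLU DNN of width $2$ and depth $1$. Its $s$-fold iterate $g_s = g\circ\cdots\circ g$ is the $2^{s-1}$-toothed sawtooth, and the polygonal interpolant of $t\mapsto t^2$ at the dyadic nodes $k/2^m$ admits the closed form
\[
F_m(t) \;=\; t - \sum_{s=1}^{m} \frac{g_s(t)}{4^s}, \qquad \sup_{t\in[0,1]} |F_m(t)-t^2| \;\leq\; 4^{-m-1}.
\]
To keep the parameter count linear in $m$, I would build a DNN $\Phi^{\mathrm{sq}}_m$ whose hidden state at depth $s$ stores the pair $(g_s(t),\,S_s(t))$ with $S_s(t) = \sum_{j\le s} g_j(t)/4^j$: the first coordinate is updated by applying $g$, the second by adding $g_s(t)/4^s$ while the previous partial sum is carried by Lemma \ref{lem:DNN_id}. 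This yields $\mathcal{R}(\Phi^{\mathrm{sq}}_m)=F_m$ with constant width and depth $O(m)$, hence $\mathcal{P}(\Phi^{\mathrm{sq}}_m) = O(m)$ and $\dim(\mathcal{D}(\Phi^{\mathrm{sq}}_m)) = O(m)$.

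Second, I would rescale the squaring approximation to the interval $[-2\kappa,2\kappa]$ (the range of $a\pm b$ for $a,b\in[-\kappa,\kappa]$). Writing $|y|=A_1(y)+A_1(-y)$ and $y^2=4\kappa^2\bigl(|y|/(2\kappa)\bigr)^2$, I precompose $\Phi^{\mathrm{sq}}_m$ with the one-layer ReLU block for $y\mapsto |y|/(2\kappa)$ and left-multiply the output by $4\kappa^2$ via Lemma \ref{lem:DNN_mat}, obtaining $\widetilde\Phi^{\mathrm{sq}}_{m,\kappa}$ with uniform error at most $\kappa^2\cdot 4^{-m}$ on $[-2\kappa,2\kappa]$ and the same order of parameters. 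I would then run $\widetilde\Phi^{\mathrm{sq}}_{m,\kappa}$ in parallel (Lemma \ref{lem:DNN_para}) on $a+b$ and $a-b$, the affine map $(a,b)\mapsto(a+b,a-b)$ being absorbed into the first weight matrix, and combine the two outputs by $(u,v)\mapsto \tfrac14(u-v)$ via Lemma \ref{lem:DNN_mat}. The total error is bounded by $\tfrac14\cdot 2\cdot\kappa^2\cdot 4^{-m} = \tfrac{\kappa^2}{2}\cdot 4^{-m}$, and choosing $m = \lceil\log_2(\max\{\kappa,1\}/\delta)\rceil + C_0$ for a suitable absolute constant $C_0$ makes this $\leq \delta$.

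The main obstacle is precisely the parameter count: a naive implementation would apply Lemma \ref{lem:DNN_sum} to $m$ separately compiled terms $g_s/4^s$, each of which itself requires $s$ compositions via Lemma \ref{lem:DNN_comp}, producing depth $m$ but width $\Theta(m)$ and hence $\mathcal{P} = \Theta(m^2)$. The shared-state construction above, where one hidden layer simultaneously advances the iterate and the partial sum using the identity channel, is what collapses this to $\mathcal{P}(\Phi^{\mathrm{sq}}_m) = O(m)$. Once that is established, a bookkeeping pass through the width bounds in the remarks after Lemmas \ref{lem:DNN_comp}, \ref{lem:DNN_para}, and \ref{lem:DNN_mat} yields the announced estimates $\mathcal{P}(\Upsilon) \leq C_1 \log_2(\max\{\kappa,1\}/\delta) + C_2$ and $\dim(\mathcal{D}(\Upsilon)) \leq C_3 \log_2(\max\{\kappa,1\}/\delta) + C_4$, completing the proof.
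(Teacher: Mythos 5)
Your proposal is correct; note that the paper itself gives no proof of this lemma, only a citation to Grohs--Herrmann, and the argument you reconstruct (polarization identity plus Yarotsky's sawtooth approximation of $t\mapsto t^2$, with the shared-state layer carrying both the iterate $g_s$ and the partial sum to keep the width constant and hence $\mathcal{P}(\Upsilon)=O(m)$ rather than $O(m^2)$) is exactly the standard construction underlying the cited result. Your error bookkeeping and the choice $m=\lceil\log_2(\max\{\kappa,1\}/\delta)\rceil+C_0$ do yield the stated logarithmic bounds on $\mathcal{P}(\Upsilon)$ and $\dim(\mathcal{D}(\Upsilon))$.
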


\medskip

\section{Walk-on-spheres Processes}\label{Sect:4}

We start with some key notation that will be extensively used along this paper. 

\medskip

Let $(\Omega, \PP, \mathcal{F})$ be a filtered probability space with $\mathcal{F}=\left(\mathcal{F}_t\right)_{t\geq 0}$. Let $\left(X_{t}\right)_{t \geq 0}$ be an isotropic $\alpha$-stable process starting at $X_0$. For $x \in D$ denote $\PP_x$ the probability measure conditional to $X_0 = x$ and $\E_x$ the respective expectation. Finally, define for any $B \subset \R^d$ the exit time for the set $B$ as 
\[
\sigma_B = \inf \{t \geq 0 : X_t \notin B\}.
\]
Now we introduce the classical WoS process.

\begin{defn}[\cite{AK1}]\label{defn:WoS}
The Walk-on-Spheres (WoS) process $\rho := (\rho_n)_{n\in \N}$ is defined as follows: 
\begin{itemize}
\item $\rho_0 = x$, $x \in D$;
\item given $\rho_{n-1}$, $n\geq 1$, the distribution of $\rho_n$ is chosen according to an independent sample of $X_{\sigma_{B_n}}$ under $\PP_{\rho_{n-1}}$, where $B_n$ is the ball centered on $\rho_{n-1}$ and radius $r_{n} = \dist(\rho_{n-1},\partial D)$. 
\end{itemize}
\end{defn}
\begin{rem}
	Notice by the Markov property that the process $\rho$ can be written as the recurrence
	\[
	\rho_n = \rho_{n-1} + Z_n, \quad n \in \N,
	\] 
	where $Z_n$ is an independent sample of $X_{\sigma_{B(0,r_n)}}$ under $\PP_0$.
\end{rem}
From the previous Remark, it is possible to rewrite $\rho_n$ for $n \in \N$ depending on $x \in D$ and on $n$ independent processes distributing accord $X_{\sigma_{B(0,1)}}$, as indicates the following Lemma.
\begin{lem}
	The WoS process $\rho := \left(\rho_n\right)_{n \in \N}$ can be defined as follows
	\begin{itemize}
		\item $\rho_0 = x$, $x \in D$;
		\item for $n \geq 1$,
		\begin{equation}\label{eq:paseo}
			\rho_n = \rho_{n-1} + r_n Y_n,
		\end{equation}
		where $Y_{n}$ is an independent sample of $X_{\sigma_{B(0,1)}}$ and $r_n = \dist(\rho_{n-1},\partial D)$.
	\end{itemize}
\end{lem}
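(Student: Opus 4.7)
\medskip

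\noindent\textbf{Proposal.} The lemma is essentially a rescaling statement: it says that each step of the WoS recursion, instead of being sampled from the exit distribution of an $\alpha$-stable process from the ball $B(\rho_{n-1},r_n)$, can equivalently be sampled by scaling the exit distribution of the same process from the \emph{unit} ball centered at the origin. The plan is to start from the form already given in the Remark after Definition~\ref{defn:WoS}, namely
\[
	\rho_n = \rho_{n-1} + Z_n, \qquad Z_n \stackrel{d}{=} X_{\sigma_{B(0,r_n)}} \text{ under } \PP_0,
\]
with $Z_n$ independent (conditionally on $r_n$) of the past, and then show that
\[
	X_{\sigma_{B(0,r)}} \stackrel{d}{=} r\, X_{\sigma_{B(0,1)}} \qquad \text{under } \PP_0, \text{ for every } r>0.
\]
Once this identity in law is established, one defines $Y_n = Z_n / r_n$ conditionally on $r_n>0$, and the recursion takes the claimed form $\rho_n = \rho_{n-1} + r_n Y_n$ with $Y_n \stackrel{d}{=} X_{\sigma_{B(0,1)}}$ and $Y_n$ independent of $\mathcal{F}_{n-1}$.

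\medskip

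The key ingredient is the scaling property \eqref{eq:scaling} of the isotropic $\alpha$-stable process. Fix $r>0$ and set $\tilde X_t := r\, X_{r^{-\alpha} t}$. By \eqref{eq:scaling} with $c=r$, the process $\tilde X$ has the same law as $X$. Now I would compute the exit time of $\tilde X$ from $B(0,r)$:
\[
	\tilde\sigma_{B(0,r)} \;=\; \inf\{ t\geq 0 : \tilde X_t \notin B(0,r)\} \;=\; \inf\{ t\geq 0 : X_{r^{-\alpha} t} \notin B(0,1)\} \;=\; r^{\alpha}\, \sigma_{B(0,1)},
\]
where the last equality comes from a change of variable $s = r^{-\alpha} t$ (c\`adl\`ag paths make the infima coincide). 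Substituting back,
\[
	\tilde X_{\tilde\sigma_{B(0,r)}} \;=\; r\, X_{r^{-\alpha}\cdot r^{\alpha}\sigma_{B(0,1)}} \;=\; r\, X_{\sigma_{B(0,1)}}.
\]
Since $\tilde X \stackrel{d}{=} X$ as processes, their exit position from $B(0,r)$ agree in law, yielding the identity $X_{\sigma_{B(0,r)}} \stackrel{d}{=} r\, X_{\sigma_{B(0,1)}}$.

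\medskip

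Finally, I would handle the independence and measurability issue. Given $\rho_{n-1}$ (hence $r_n$) measurable with respect to $\mathcal{F}_{n-1}$, the sample $Z_n$ is, by construction in Definition~\ref{defn:WoS}, drawn independently of $\mathcal{F}_{n-1}$ from the law of $X_{\sigma_{B(0,r_n)}}$ under $\PP_0$. Using the scaling identity above with $r=r_n$ (conditionally on $\mathcal{F}_{n-1}$), one may realize $Z_n$ on an enlarged probability space as $Z_n = r_n Y_n$ with $Y_n$ drawn independently of $\mathcal{F}_{n-1}$ from the law of $X_{\sigma_{B(0,1)}}$ under $\PP_0$. This gives \eqref{eq:paseo}. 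The only mildly delicate point is the measurable selection of $Y_n$ as a function of $Z_n$ and $r_n$, which is harmless since the map $(z,r)\mapsto z/r$ on $\R^d\times (0,\infty)$ is continuous; I do not expect any real obstacle here, as the argument is a direct application of the scaling property combined with the Markov construction already recorded in the Remark.
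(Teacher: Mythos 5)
Your proposal is correct and follows essentially the same route as the paper: both arguments rest on the scaling property \eqref{eq:scaling} to derive the exit-time identity $\sigma_{B(0,r)} = r^{\alpha}\sigma_{B(0,1)}$ and hence the identity in law $X_{\sigma_{B(0,r)}} \stackrel{d}{=} r\,X_{\sigma_{B(0,1)}}$ under $\PP_0$, which is then fed into the recursion $\rho_n = \rho_{n-1} + Z_n$ from the Remark. Your version is in fact slightly more careful than the paper's, since you make explicit that the scaling is applied to the rescaled process $\tilde X$ and you address the conditional independence and measurable-selection points that the paper leaves implicit.
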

\begin{proof}
Note by the scaling property \eqref{eq:scaling} that
\[
X_{t} \qquad \hbox{and} \qquad r_n X_{r_n^{-\alpha}t}
\]
have the same distribution for all $n \in \N$. Therefore
\begin{equation}\label{eq:scaling1}
\begin{aligned}
	\sigma_{B(0,r_n)} &= \inf \{t \geq 0 : X_{t} \notin B(0,r_n)\} \\
	&= r_n^{\alpha} \inf \{r_n^{-\alpha}t \geq 0: r_n X_{r^{-\alpha}_n t} \notin B(0,r_n)\}\\
	&= r_n^{\alpha} \inf \{s \geq 0: r_n X_s \notin B(0,r_n)\}\\
	&= r_n^{\alpha} \inf\{s \geq 0: X_s \notin B(0,1) \} = r_n^{\alpha} \sigma_{B(0,1)}.
\end{aligned}
\end{equation}
This equality and the scaling property implies that
\begin{equation}\label{eq:scaling2}
X_{\sigma_{B(0,1)}} \qquad \hbox{and} \qquad r_n^{-1}X_{r_n^{\alpha}\sigma_{B(0,1)}}
\end{equation}
are equal in law under $\PP_0$, and then from Remark 4.1 $Z_n$ and $r_n X_{\sigma_{B(0,1)}}$ have the same distribution under $\PP_0$. We can conclude that for $n\geq 1$, $\rho_n$ can be written as the recurrence
\[
\rho_n = \rho_{n-1} + r_n Y_n,
\]
where $Y_n$ is an independent sample of $X_{\sigma_{B(0,1)}}$.
\end{proof}
To study the WoS processes, we need to know about the processes $X_{\sigma_{B(0,1)}}$. The following result gives the distribution density of $X_{\sigma_{B(0,1)}}$. 
\begin{teo}[Blumenthal, Getoor, Ray, 1961. \cite{Blumenthal1}]\label{teo:Blu} Suppose that $B(0,1)$ is a unit ball centered at the origin and write $\sigma_{B(0,1)} = \inf \{ t > 0 : X_t \notin B(0,1)\}$. Then,
	\begin{equation}
		\PP_0\left(X_{\sigma_{B(0,1)}} \in dy\right) = \pi^{-(d/2+1)} \Gamma \left(\frac d2 \right) \sin(\pi\alpha/2) \left|1-|y|^{2}\right|^{-\alpha/2} |y|^{-d} dy, \qquad |y| > 1.
	\end{equation}
\end{teo}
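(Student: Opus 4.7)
The plan is a two-step argument: first reduce the problem to identifying a radial profile by symmetry, then identify that profile together with its normalization constant. I assume throughout the rotational invariance and scaling property of the isotropic $\alpha$-stable process stated in \eqref{eq:scaling}.

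\textbf{Step 1 (symmetry and density).} The isotropic $\alpha$-stable process is, by its second equivalent definition, invariant in law under every orthogonal transformation $U$ of $\R^d$. Since the unit ball is also $U$-invariant, so are the stopping time $\sigma_{B(0,1)}$ and the random exit point $X_{\sigma_{B(0,1)}}$. Hence the law of $X_{\sigma_{B(0,1)}}$ under $\PP_0$ is a rotationally invariant probability measure on $\{|y|>1\}$. Because $\sigma_{B(0,1)}$ is triggered by a jump whose direction and length are governed by the absolutely continuous Lévy measure $\Pi(dz)$ defined in \eqref{eq:levy_measure}, the exit law admits a density $p(|y|)$ with respect to Lebesgue measure on $\{|y|>1\}$, and this density depends only on $|y|$.

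\textbf{Step 2 (identifying the radial shape).} The main step is to show that $p(|y|)$ is proportional to $|1-|y|^2|^{-\alpha/2} |y|^{-d}$. The approach I would use is to exhibit, for each fixed $y_0$ with $|y_0|>1$, an explicit "$\alpha$-harmonic" test function on $B(0,1)$ whose value at $x=0$ reproduces the candidate Poisson kernel, and then invoke Dynkin's formula. Concretely, one verifies that the function
\begin{equation*}
\phi_{y_0}(x) \;=\; \frac{(1-|x|^2)^{\alpha/2}}{(|y_0|^2-1)^{\alpha/2}\,|x-y_0|^{d}}, \qquad x\in B(0,1),
\end{equation*}
extended by $\delta_{y_0}$-like data outside, satisfies $(-\Delta)^{\alpha/2}\phi_{y_0}=0$ on $B(0,1)$; applying the martingale property to $\phi_{y_0}(X_{t\wedge \sigma_{B(0,1)}})$ and taking $t\uparrow\infty$ together with the quasi-left-continuity of the stable process yields
\begin{equation*}
\phi_{y_0}(0) \;=\; \E_{0}\!\left[\phi_{y_0}(X_{\sigma_{B(0,1)}})\right] \;=\; \int_{|y|>1} \phi_{y_0}(y)\, p(|y|)\, dy.
\end{equation*}
Specializing to $x=0$ in $\phi_{y_0}$ and using a uniqueness argument (take $y_0$ arbitrary and vary the test class) forces $p(|y|) = C\,|1-|y|^2|^{-\alpha/2}|y|^{-d}$ for a single constant $C>0$ that does not depend on $y$. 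I expect \emph{this step} to be the main obstacle: verifying the fractional harmonicity of $\phi_{y_0}$ requires a careful principal-value computation of the singular integral \eqref{FractLap} on a non-smooth function, together with a correct handling of the boundary layer near $\partial B(0,1)$.

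\textbf{Step 3 (normalization).} Having reduced to $p(|y|)=C|1-|y|^2|^{-\alpha/2}|y|^{-d}$, fix $C$ by the requirement $\int_{|y|>1} p(|y|)\,dy = 1$. Passing to polar coordinates with $\omega_{d-1} = 2\pi^{d/2}/\Gamma(d/2)$ and substituting $u = 1/r^2$ reduces the radial integral to a Beta function:
\begin{equation*}
\int_1^\infty (r^2-1)^{-\alpha/2}\, r^{-1}\,dr \;=\; \tfrac{1}{2}\int_0^1 u^{\alpha/2-1}(1-u)^{-\alpha/2}\,du \;=\; \tfrac{1}{2}\,\Gamma(\alpha/2)\,\Gamma(1-\alpha/2).
\end{equation*}
The Euler reflection formula $\Gamma(\alpha/2)\Gamma(1-\alpha/2) = \pi/\sin(\pi\alpha/2)$ then gives
\begin{equation*}
1 \;=\; C\cdot \frac{2\pi^{d/2}}{\Gamma(d/2)}\cdot \frac{\pi}{2\sin(\pi\alpha/2)} \;=\; C\cdot \frac{\pi^{d/2+1}}{\Gamma(d/2)\sin(\pi\alpha/2)},
\end{equation*}
whence $C = \pi^{-(d/2+1)}\,\Gamma(d/2)\,\sin(\pi\alpha/2)$, matching the stated formula. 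The hard part is Step 2; Steps 1 and 3 are essentially symmetry and a standard Beta-function computation.
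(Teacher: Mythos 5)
The paper does not prove this statement: Theorem \ref{teo:Blu} is quoted as a classical result of Blumenthal, Getoor and Ray \cite{Blumenthal1}, so there is no internal proof to compare against, and your proposal must be judged on its own. Your Steps 1 and 3 are essentially fine: rotational invariance of the exit law follows from the isotropy of $X$, and the normalization integral via $u=1/r^2$ and the reflection formula $\Gamma(\alpha/2)\Gamma(1-\alpha/2)=\pi/\sin(\pi\alpha/2)$ is computed correctly (it is the $\beta=0$ case of the paper's Corollary \ref{cor:Kab}). The one understated point in Step 1 is the claim that the exit law charges only $\{|y|>1\}$ and has a density there; this rests on the nontrivial fact that for $\alpha<2$ the process a.s.\ leaves the ball by a jump and puts no mass on $\partial B(0,1)$, which deserves a reference rather than one clause.

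Step 2, which you rightly flag as the crux, does not work as written. The function $\phi_{y_0}(x)=(1-|x|^2)^{\alpha/2}(|y_0|^2-1)^{-\alpha/2}|x-y_0|^{-d}$ is defined only on $B(0,1)$: for $|y|>1$ the factor $(1-|y|^2)^{\alpha/2}$ is not real, and ``extended by $\delta_{y_0}$-like data'' is not a function. Since $(-\Delta)^{\alpha/2}$ is nonlocal, the very assertion $(-\Delta)^{\alpha/2}\phi_{y_0}=0$ on $B(0,1)$ is not well-formed without a global definition of $\phi_{y_0}$ integrable against $(1+|y|)^{-d-\alpha}$, and the same global definition is needed to evaluate $\phi_{y_0}(X_{\sigma_{B(0,1)}})$, because the process exits by a jump to an arbitrary point of $\{|y|>1\}$. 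Your displayed identity $\phi_{y_0}(0)=\int_{|y|>1}\phi_{y_0}(y)\,p(|y|)\,dy$ is therefore ill-posed, and even formally it is circular, since its right-hand side presupposes the exit density being determined. Two standard ways to close the gap: (i) M.~Riesz's route --- verify by an explicit singular-integral computation that $u_g(x):=\int_{|y|>1}C\,(1-|x|^2)^{\alpha/2}(|y|^2-1)^{-\alpha/2}|x-y|^{-d}g(y)\,dy$ solves the exterior Dirichlet problem for every nice $g$, then invoke uniqueness of the stochastic solution (Theorem \ref{teo:sol} with $f\equiv0$) to identify $\E_0[g(X_{\sigma_{B(0,1)}})]$ with $\int P(0,y)g(y)\,dy$ for all $g$; or (ii) the Ikeda--Watanabe formula $\PP_x(X_{\sigma_B}\in dy)=\bigl(\int_B G_B(x,z)\,\Pi(y-z)\,dz\bigr)dy$, which expresses the exit density as the convolution of the explicit occupation density \eqref{eq:V_1(0)} with the L\'evy measure \eqref{eq:levy_measure}. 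Either way the analytic core is a concrete integral identity that your sketch does not supply, so as it stands the argument is a plausible outline of the known proof rather than a proof.
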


Using this result, one can prove a key result for the expectation of $X_{\sigma_{B(0,1)}}$ moments.

\begin{cor}\label{cor:Kab}
	For all $\alpha \in (0,2)$, $\beta \in [0,\alpha)$ we have
	\begin{equation}\label{eq:Kab}
	\E_0 \left[\left|X_{\sigma_{B(0,1)}}\right|^{\beta}\right] = \frac{\sin(\pi \alpha /2)}{\pi} \frac{\Gamma\left(1- \frac{\alpha}{2}\right)\Gamma\left(\frac{\alpha-\beta}{2}\right)}{\Gamma\left(1-\frac{\beta}{2}\right)} =: K(\alpha,\beta).
	\end{equation}
\end{cor}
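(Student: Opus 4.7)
The plan is to compute the expectation directly from the explicit density given by Theorem \ref{teo:Blu} (Blumenthal--Getoor--Ray), reducing the $d$-dimensional integral to a one-dimensional Beta integral.

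First, I would write
\[
\E_0\!\left[|X_{\sigma_{B(0,1)}}|^{\beta}\right] = \pi^{-(d/2+1)}\,\Gamma(d/2)\sin(\pi\alpha/2) \int_{|y|>1} |y|^{\beta-d}\,(|y|^2-1)^{-\alpha/2}\,dy,
\]
and then pass to polar coordinates using the surface area $\omega_{d-1}=2\pi^{d/2}/\Gamma(d/2)$ of the unit sphere in $\R^d$. The angular integration is trivial because the integrand depends only on $|y|$, and this reduces the expression to
\[
\E_0\!\left[|X_{\sigma_{B(0,1)}}|^{\beta}\right] = \frac{2\sin(\pi\alpha/2)}{\pi} \int_1^{\infty} r^{\beta-1}\,(r^2-1)^{-\alpha/2}\,dr,
\]
where the prefactor $\Gamma(d/2)$ cancels with $\omega_{d-1}$, eliminating the dependence on the dimension $d$.

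The second step is the substitution $u = 1/r^2$, so that $r^{\beta-1}\,dr = -\tfrac{1}{2} u^{-(\beta+1)/2}\,du$ and $(r^2-1)^{-\alpha/2} = u^{\alpha/2}(1-u)^{-\alpha/2}$. After collecting exponents, the integral becomes
\[
\int_1^{\infty} r^{\beta-1}(r^2-1)^{-\alpha/2}\,dr = \frac{1}{2}\int_0^1 u^{(\alpha-\beta)/2 - 1}(1-u)^{(1-\alpha/2)-1}\,du = \frac{1}{2} B\!\left(\frac{\alpha-\beta}{2},\,1-\frac{\alpha}{2}\right),
\]
which is precisely a Beta integral. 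The convergence at $u=0$ requires $\alpha-\beta>0$, which is guaranteed by the hypothesis $\beta \in [0,\alpha)$, while convergence at $u=1$ requires $\alpha < 2$, which holds by assumption.

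Finally, applying the Beta--Gamma identity $B(x,y)=\Gamma(x)\Gamma(y)/\Gamma(x+y)$ and observing that $\tfrac{\alpha-\beta}{2} + 1 - \tfrac{\alpha}{2} = 1 - \tfrac{\beta}{2}$, the computation closes to
\[
\E_0\!\left[|X_{\sigma_{B(0,1)}}|^{\beta}\right] = \frac{\sin(\pi\alpha/2)}{\pi}\,\frac{\Gamma(1-\alpha/2)\,\Gamma((\alpha-\beta)/2)}{\Gamma(1-\beta/2)},
\]
which is exactly $K(\alpha,\beta)$. The only step that requires any care is tracking the exponents through the substitution $u=1/r^2$ and checking the convergence criteria; both are routine. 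There is no genuine obstacle, since Theorem \ref{teo:Blu} already supplies the exact density and the remainder is a standard Beta--Gamma computation.
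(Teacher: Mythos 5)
Your proof is correct and follows the same strategy as the paper's: integrate the Blumenthal--Getoor--Ray density in polar coordinates, cancel the dimension-dependent constants against the sphere's surface area, and recognize the resulting radial integral as a Beta function. The paper reaches the Beta integral via two successive substitutions ($u=1/r$, then $t=u^2$) whereas you compress this into the single substitution $u=1/r^2$, which is cleaner; note only that the intermediate line $r^{\beta-1}\,dr = -\tfrac{1}{2} u^{-(\beta+1)/2}\,du$ should read $-\tfrac{1}{2} u^{-(\beta+2)/2}\,du$, though your final combined exponent $(\alpha-\beta)/2-1$ is correct.
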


\begin{rem}
	Notice that the value of $K(\alpha,\beta)$ does not depend of the dimension $d$.
\end{rem}
\begin{rem}
	the condition $\beta < \alpha $ is necessary due to Corollary \ref{cor:Xmoment}. If $\beta \geq \alpha$ then $\E[|X_t|^{\beta}] = \infty$ for all $t>0$. Moreover, the integral
	\[
	\int_1^{\infty} \frac{r^{\beta-1}}{(r^2-1)^{\alpha/2}}dr,
	\]
	obtained in the proof of the Corollary \ref{cor:Kab} does not converges if $\beta \geq \alpha$.
\end{rem}

\begin{proof}[Proof of Corollary \ref{cor:Kab}]
	Let $\alpha \in (0,2)$, $\beta \in [0,\alpha)$. Notice by Theorem \ref{teo:Blu} and definition of the expectation that
	\begin{equation}\label{eq:E0X}
	\begin{aligned}
	\E_0 \left[\left|X_{\sigma_{B(0,1)}}\right|^{\beta}\right] &= \int_{|y|>1} |y|^{\beta} \PP_0 \left(X_{\sigma_{B(0,1)}} \in dy\right) \\
	&= \pi^{-(d/2+1)} \Gamma \left(\frac d2 \right) \sin(\pi\alpha/2) \int_{|y|>1} \left|1-|y|^{2}\right|^{-\alpha/2} |y|^{\beta-d} dy.
	\end{aligned}
	\end{equation}
	Using spherical coordinates one has
	\[
	\int_{|y|>1} \left|1-|y|^{2}\right|^{-\alpha/2} |y|^{\beta-d} dy = \int_{\mathbb{S}^{d-1}} \int_{1}^{\infty} \left|1-r^2\right|^{-\alpha/2} r^{\beta-d} r^{d-1} dr dS,
	\]
	where $\mathbb{S}^{d-1}$ is the surface area of the unit $(d-1)$-sphere embedded in dimension $d$. One has that \cite{surface}
	\[
	\left|\mathbb{S}^{d-1}\right| = \frac{2\pi^{d/2}}{\Gamma \left(\frac{d}{2}\right)},
	\]
	and then
	\[
	\int_{|y|>1} \left|1-|y|^{2}\right|^{-\alpha/2} |y|^{\beta-d} dy = \frac{2\pi^{d/2}}{\Gamma \left(\frac{d}{2}\right)} \int_1^{\infty} \frac{r^{\beta-1}}{(r^2-1)^{\alpha/2}} dr.
	\]
	Replacing this result into \eqref{eq:E0X} give us that
	\[
	\E_0 \left[\left|X_{\sigma_{B(0,1)}}\right|^{\beta}\right] = \frac{2}{\pi} \sin(\pi \alpha/2)\int_1^{\infty} \frac{r^{\beta-1}}{(r^2-1)^{\alpha/2}}dr.
	\]
	Now we are able to use a change of variables $u = 1/r$, then
	\[
	\int_1^{\infty} \frac{r^{\beta-1}}{(r^2-1)^{\alpha/2}}dr = \int_{0}^{1} \frac{1}{u^2} u^{1-\beta} \frac{u^{\alpha}}{(1-u^2)^{-\alpha/2}} du = \int_0^1 u^{\alpha-\beta-1}(1-u^2)^{-\alpha/2}du.
	\]
	Using another change of variable, $t = u^2$ we have
	\[
	\int_0^1 \frac{1}{2t^{\frac{1}{2}}} t^{\frac{\alpha-\beta-1}{2}}(1-t)^{-\frac{\alpha}{2}}dt = \frac{1}{2}\int_0^1 t^{\frac{\alpha-\beta}{2}-1}(1-t)^{1-\frac{\alpha}{2}-1} dt.
	\]
	This result implies that
	\[
	\E_x\left[\left|X_{\sigma_{B(0,1)}}\right|^{\beta}\right] = \frac{\sin(\pi \alpha/2)}{\pi} \int_0^1 t^{\frac{\alpha-\beta}{2} - 1}(1-t)^{1-\frac{\alpha}{2} - 1}dt.
	\]
	The integral has the form of the \emph{Beta function}, formally defined as:
	\[
	B(z,w) := \int_0^1 u^{z-1}(1-u)^{w-1}du,
	\]
	For full details of the Beta function see, e.g. \cite{beta}. In particular, the Beta function satisfies
	\[
	B(z,w) = \frac{\Gamma(z)\Gamma(w)}{\Gamma(z+w)}.
	\]
	Finally
	\[
	\E_0 \left[\left|X_{\sigma_{B(0,1)}}\right|^{\beta}\right] = \frac{\sin(\pi \alpha/2)}{\pi} B\left(\frac{\alpha-\beta}{2},1-\frac{\alpha}{2}\right) = \frac{\sin(\pi\alpha/2)}{\pi}\frac{\Gamma\left(1-\frac{\alpha}{2}\right)\Gamma\left(\frac{\alpha-\beta}{2}\right)}{\Gamma\left(1-\frac{\beta}{2}\right)}.
	\]
\end{proof}

\subsection{Relation between WoS and isotropic $\alpha$-stable process}
Recall from Section \ref{Sect:4} that the process $(\rho_n)_{n\geq0}$ is related to a family of processes distributing accord $X_{\sigma_{B(0,1)}}$. We want now to have a relation between the processes $(X_t)_{t \geq0}$ and $(\rho_n)_{n\geq 0}$. For this define $\widetilde{r}_1 := \dist(x,\partial D)$, $\widetilde{B}_1 := B(x,r_1)$, $\tau_1 := \sigma_{\widetilde{B}_1}$ and for all $n \geq 1$ define:
\begin{align}
	\widetilde{r}_{n+1} &:= \hbox{dist}(X_{\mathcal{I}(n)},\partial D), \\
	\widetilde{B}_{n+1} &:=B\left(X_{\mathcal{I}(n)},\widetilde{r}_{n+1}\right), \\
	\tau_{n+1} &:= \inf\{t \geq 0 : X_{t + \mathcal{I}(n)} \notin \widetilde{B}_{n+1}\}, \label{eq:tau_n}
\end{align}
where 
\begin{equation}\label{eq:I_n}
 \mathcal{I}(n) := \sum_{i=1}^{n} \tau_i, \qquad \mathcal{I}(0) = 0.
\end{equation}
$\mathcal{I}(n)$ represents the total time of the process $X_t$ takes to exit the $n$ balls $\widetilde{B}_{1},...,\widetilde{B}_{n}$. The following Lemma establishes that for all $n \in \N$, $\rho_n$ is equally distributed to the process $(X_t)_{t\geq0}$ exiting the $n$ balls $\widetilde{B}_{1},...,\widetilde{B}_{n}$, that is, $X_{\mathcal{I}(n)}$. 

\begin{lem}
	For all $n \geq 0$ and $x \in D$, $\rho_n$ and $X_{\mathcal{I}(n)}$ have the same distribution starting at $x$.
\end{lem}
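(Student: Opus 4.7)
The plan is to proceed by induction on $n$. The base case $n=0$ is immediate since $\rho_0 = x = X_0 = X_{\mathcal{I}(0)}$ under $\PP_x$, because $\mathcal{I}(0)=0$ by definition.

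For the inductive step, assuming $\rho_{n-1}$ and $X_{\mathcal{I}(n-1)}$ share the same law under $\PP_x$, I would first apply the strong Markov property of the isotropic $\alpha$-stable process at the stopping time $\mathcal{I}(n-1)$. This lets me write the increment
\[
X_{\mathcal{I}(n)} - X_{\mathcal{I}(n-1)} = \widetilde{X}_{\widetilde{\sigma}_n},
\]
where $\widetilde{X}_t := X_{t+\mathcal{I}(n-1)} - X_{\mathcal{I}(n-1)}$ is an isotropic $\alpha$-stable process starting at $0$, independent of $\mathcal{F}_{\mathcal{I}(n-1)}$, and $\widetilde{\sigma}_n := \inf\{t\geq 0 : \widetilde{X}_t \notin B(0,\widetilde{r}_n)\}$, with $\widetilde{r}_n = \dist(X_{\mathcal{I}(n-1)}, \partial D)$ being $\mathcal{F}_{\mathcal{I}(n-1)}$-measurable.

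Next I would invoke the same scaling argument already used in the proof of the preceding lemma (see \eqref{eq:scaling1}--\eqref{eq:scaling2}): for any deterministic $r>0$, the exit position $\widetilde{X}_{\sigma_{B(0,r)}}$ under $\PP_0$ has the same law as $r\cdot \widetilde{X}_{\sigma_{B(0,1)}}$. Conditioning on the value of $\widetilde{r}_n$, which is independent of $\widetilde{X}$, this yields
\[
X_{\mathcal{I}(n)} \stackrel{d}{=} X_{\mathcal{I}(n-1)} + \widetilde{r}_n\,Y'_n,
\]
where $Y'_n$ is an independent sample of $X_{\sigma_{B(0,1)}}$ under $\PP_0$, independent of $X_{\mathcal{I}(n-1)}$.

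To close the induction, I compare this with the WoS recurrence \eqref{eq:paseo}, which reads $\rho_n = \rho_{n-1} + \dist(\rho_{n-1},\partial D)\,Y_n$ with $Y_n$ an independent sample of $X_{\sigma_{B(0,1)}}$. Since the map $(z,y)\mapsto z + \dist(z,\partial D)\,y$ is deterministic and Borel measurable, the induction hypothesis $\rho_{n-1}\stackrel{d}{=}X_{\mathcal{I}(n-1)}$, combined with the independence of $Y_n$ and $Y'_n$ from the respective current positions, gives $\rho_n \stackrel{d}{=} X_{\mathcal{I}(n)}$. The main (and essentially only) technical point is the simultaneous use of the strong Markov property at the random time $\mathcal{I}(n-1)$ and the self-similarity identity at the random radius $\widetilde{r}_n$; both tools are classical for $\alpha$-stable processes and were already exploited in the previous lemma, so no genuinely new obstacle appears here.
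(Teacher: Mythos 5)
Your proof is correct and follows essentially the same route as the paper: induction on $n$, the (strong) Markov property at the stopping time $\mathcal{I}(n-1)$ together with the scaling property to get the recurrence $X_{\mathcal{I}(n)} \stackrel{d}{=} X_{\mathcal{I}(n-1)} + \widetilde{r}_n Y'_n$, and then matching this with the WoS recurrence \eqref{eq:paseo}. You simply spell out the strong-Markov decomposition and the closing of the induction in more detail than the paper's rather terse write-up.
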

\begin{proof}
 Note that under $\PP_{X_{\mathcal{I}(n-1)}}$, $X_{\mathcal{I}(n)}$ has the same distribution as $X_{\sigma_{\widetilde{B}_n}}$. Thus, by the Markov property and the scaling property one has
\[
X_{\mathcal{I}(n)} = X_{\mathcal{I}(n-1)} + \widetilde{r}_{n} Y'_n,
\]
where $Y'_n$ is an independent sample of $X_{\sigma_{B(0,1)}}$ under $\PP_0$. From the two constructions below in addition with induction, one has that $\rho_n$ and $X_{\mathcal{I}(n)}$ has the same distribution starting at $x$, for all $n \geq 0$ and $x \in D$.
\end{proof}
Let
\begin{equation}\label{eq:N}
N = \min \{n \in \N : \rho_n \notin D\}.
\end{equation}
This random variable describes the quantity of balls $\widetilde{B}_{n}$ that the process $(X_t)_{t\geq0}$ exits before exits the domain $D$. The following Theorem ensures that $N$ is almost surely finite.
\begin{teo}[ \cite{AK1}, Theorem 5.4] \label{teo:N}
	Let $D$ be a open and bounded set. Therefore for all $x \in D$, there exists a constant $\widetilde{p} = \widetilde{p}(\alpha,d) > 0$ independent of $x$ and $D$, and a random variable $\Gamma$ such that $N \leq \Gamma$ $\PP_x$-a.s., where
	\begin{equation}
		\PP_x(\Gamma = k) = (1-\widetilde{p})^{k-1}\widetilde{p}, \hspace{.5cm} k \in \N.
	\end{equation}
\end{teo}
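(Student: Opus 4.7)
My plan is to deduce the stochastic dominance $N \leq \Gamma$ from a uniform one-step exit-probability estimate for the walk-on-spheres chain, which in turn reduces by scaling and isotropy to a geometric lemma on the exit distribution from the unit ball.

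The first step is a Markovian reduction. I claim it suffices to establish that
\[
\inf_{y \in D} \PP_y(\rho_1 \notin D) \geq \widetilde p
\]
for some $\widetilde p = \widetilde p(\alpha, d) \in (0,1)$ independent of $D$ and $y$. Given this, the Markovian construction \eqref{eq:paseo} of $(\rho_n)_{n \geq 0}$ from i.i.d.\ samples $Y_n$ of $X_{\sigma_{B(0,1)}}$, together with conditioning on $\rho_{k-1}$ and a straightforward induction on $k$, yields $\PP_x(N > k) \leq (1 - \widetilde p)^k$. Since a $\Gamma$ with the stated pmf $\PP(\Gamma = k) = (1-\widetilde p)^{k-1}\widetilde p$ satisfies $\PP(\Gamma > k) = (1-\widetilde p)^k$, Strassen's theorem (or a direct quantile coupling on a possibly enlarged probability space) produces such a $\Gamma$ with $N \leq \Gamma$ $\PP_x$-a.s.

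Next I would reduce the one-step claim to a canonical setting. Fix $y \in D$, let $r = \dist(y,\partial D) > 0$, and choose $z \in \partial D$ with $|z-y| = r$ (which exists since $\partial D$ is compact). By the scaling identities \eqref{eq:scaling1}--\eqref{eq:scaling2} and the rotational and translational invariance of the isotropic $\alpha$-stable law, under $\PP_y$ one has $\rho_1 = y + rY$ with $Y \stackrel{d}{=} X_{\sigma_{B(0,1)}}$ under $\PP_0$. After translating by $-y$ and rotating $(z-y)/r$ to a fixed unit vector $e_1$, the one-step claim becomes: for every open bounded $\widetilde D \subset \R^d$ with $B(0,1) \subset \widetilde D$ and $e_1 \in \partial \widetilde D$,
\[
\PP_0\!\left(Y \notin \widetilde D\right) \geq \widetilde p(\alpha,d).
\]
I would prove this using the explicit Blumenthal--Getoor--Ray density of Theorem~\ref{teo:Blu} together with the isotropy of $Y$: decomposing $Y = |Y|\,U$ with $U$ uniform on $S^{d-1}$ independent of $|Y|$, the mass on any fixed spherical cap is a universal constant depending only on $d$, and the radial distribution places universal positive mass on any prescribed interval $(R_1, R_2) \subset (1,\infty)$ depending only on $\alpha$.

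The main obstacle is precisely this geometric lemma for arbitrary (non-convex) open bounded $\widetilde D$: the set $\widetilde D^c \cap \{|y|>1\}$ need not contain any fixed reference region of positive Haar measure around $e_1$. If $\widetilde D$ were convex, the supporting hyperplane to $\widetilde D$ at $e_1$ would coincide with the tangent plane to $B(0,1)$ at $e_1$, so $\{y \cdot e_1 > 1\} \subset \widetilde D^c$ and the universal mass of this half-space under $Y$ immediately produces $\widetilde p$. For the general open bounded case I would follow \cite[Theorem~5.4]{AK1}, leveraging the pure-jump structure and heavy-tailed exit distribution of the isotropic $\alpha$-stable process together with the mere boundedness of $\widetilde D$ to extract a universal lower bound without any convexity or regularity assumption on $\partial \widetilde D$.
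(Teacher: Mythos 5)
The paper cites Theorem 5.4 of \cite{AK1} and supplies no proof, so there is no in-paper argument to compare against; the following evaluates your reconstruction on its own. Your architecture is the natural one: reduce to a uniform one-step escape estimate $\inf_{y\in D}\PP_y(\rho_1\notin D)\geq\widetilde{p}(\alpha,d)$, iterate the Markov property to get $\PP_x(N>k)\leq(1-\widetilde{p})^k$, and produce the dominating geometric $\Gamma$ by a quantile coupling. Your treatment of the convex case is correct: after scaling $B(\rho_{n-1},r_n)$ to $B(0,1)$ and rotating the nearest boundary point to $e_1$, the supporting hyperplane of the convex image domain at $e_1$ coincides with the tangent plane $\{y\cdot e_1=1\}$, so the open half-space $\{y\cdot e_1>1\}$ lies in the complement, and $\PP_0(X_{\sigma_{B(0,1)}}\cdot e_1>1)$ is a positive constant depending only on $(\alpha,d)$ by Theorem~\ref{teo:Blu}.

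The gap is in your last sentence: boundedness of $\widetilde{D}$ alone does \emph{not} yield a universal lower bound, and the heavy tail of the exit law cannot supply one, because after rescaling the image domain has diameter $\diam(D)/\dist(y,\partial D)$, which is unbounded as $y$ approaches $\partial D$. Concretely, the normalized claim you need is $\PP_0(Y\notin\widetilde{D})\geq\widetilde{p}(\alpha,d)$ for every bounded open $\widetilde{D}\supset B(0,1)$ with $e_1\in\partial\widetilde{D}$, where $Y\stackrel{d}{=}X_{\sigma_{B(0,1)}}$ under $\PP_0$; but taking $\widetilde{D}_M:=B(0,M)\setminus\{e_1\}$ --- open, bounded, containing $B(0,1)$, with $e_1\in\partial\widetilde{D}_M$ --- the absolute continuity of the Blumenthal--Getoor--Ray law gives
\begin{equation*}
\PP_0\bigl(Y\notin\widetilde{D}_M\bigr)=\PP_0\bigl(|Y|\geq M\bigr)\longrightarrow 0 \quad\text{as } M\to\infty,
\end{equation*}
by Corollary~\ref{cor:Kab}. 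This is not a scaling artifact: starting the WoS chain at $y=0$ inside $D=B(0,M)\setminus\{e_1\}$ already gives $r_1=1$ and $\PP_0(\rho_1\notin D)=\PP_0(|Y|\geq M)$, so no $\widetilde{p}$ independent of $D$ exists. In fact, Theorem 5.4 of \cite{AK1} assumes $D$ is bounded and \emph{convex}; it is precisely convexity that makes the half-space bound available. That hypothesis has been dropped in the paper's transcription of the theorem, and your proposal --- like the statement it targets --- is incomplete without it.
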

\begin{rem}
	Although the random variable $\Gamma$ has the same distribution for each $x \in D$, it is not the same random variable for each $x \in D$. 
\end{rem}
\begin{rem}
	This theorem implies that
	\[
	\PP_x(N>n) \leq \PP_x(\Gamma > n) = (1-\widetilde{p})^n, \qquad n \in \N.
	\]
\end{rem}
The definition of $\mathcal{I}(n)$ and $N$ in \eqref{eq:I_n} and \eqref{eq:N} imply that the total time of $(X_t)_{t \geq 0}$ that takes to exit $N$ balls $\widetilde{B}_{1},...,\widetilde{B}_{N}$ is equal to the time of $(X_t)_{t\geq0}$ that takes to exit $D$. More precisely 
\begin{lem}\label{lem:INsigmaD}
	For $x \in D$, let $X_{t}$ be an isotropic $\alpha$-stable process. Therefore, a.s.
	\[
	\mathcal{I}(N) = \sigma_D.
	\]
\end{lem}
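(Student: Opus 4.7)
The strategy is to establish the two inequalities $\sigma_D \leq \mathcal{I}(N)$ and $\sigma_D \geq \mathcal{I}(N)$ pathwise, working with the coupling implicit in the construction \eqref{eq:tau_n}--\eqref{eq:I_n}, under which $\rho_n = X_{\mathcal{I}(n)}$ almost surely (as pointed out right before the lemma and in the proof of the previous lemma, both processes can be driven by the same i.i.d.\ copies of $X_{\sigma_{B(0,1)}}$). Once this coupling is in force, $N = \min\{n\in\N : X_{\mathcal{I}(n)} \notin D\}$ almost surely, and Theorem \ref{teo:N} ensures $N < \infty$ a.s., so $\mathcal{I}(N)$ is well-defined.

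First I would prove the upper bound $\sigma_D \leq \mathcal{I}(N)$. By definition of $N$ one has $X_{\mathcal{I}(N)} = \rho_N \notin D$, so the càdlàg process $X$ has left $D$ by time $\mathcal{I}(N)$, whence $\sigma_D \leq \mathcal{I}(N)$.

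Next I would prove the reverse inequality $\sigma_D \geq \mathcal{I}(N)$ by showing that $X_t \in D$ for every $t < \mathcal{I}(N)$. For such $t$, there is a unique $k \in \{1,\dots,N\}$ with $t \in [\mathcal{I}(k-1), \mathcal{I}(k))$. The definition \eqref{eq:tau_n} of $\tau_k$ gives $X_t \in \widetilde{B}_k = B(X_{\mathcal{I}(k-1)}, \widetilde{r}_k)$. Since $k-1 < N$, the coupling yields $X_{\mathcal{I}(k-1)} = \rho_{k-1} \in D$, so
\[
\widetilde{r}_k = \dist(X_{\mathcal{I}(k-1)}, \partial D) > 0
\]
and by the very definition of the distance to $\partial D$, the open ball $\widetilde{B}_k$ is contained in $D$. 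Hence $X_t \in D$, which gives $\sigma_D \geq \mathcal{I}(N)$.

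The main subtlety I expect is the pathwise identification $\rho_n = X_{\mathcal{I}(n)}$: the previous lemma only asserts equality in distribution, so I would briefly invoke the coupling from \eqref{eq:scaling1}--\eqref{eq:scaling2} (taking $Y_n$ in \eqref{eq:paseo} to be exactly the independent samples driving the jumps of $X$ out of the successive balls $\widetilde{B}_n$) to upgrade this to an a.s. equality. A secondary minor point is the possibility that the jump at time $\mathcal{I}(k)$ lands on $\partial D$; this happens with probability zero by the absolute continuity of the exit distribution in Theorem \ref{teo:Blu}, so it does not affect the a.s.\ identity.
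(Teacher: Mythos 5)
Your proposal is correct and establishes the same two pathwise inequalities as the paper, but the second half is handled differently and, in fact, more cleanly. For $\mathcal{I}(N) \geq \sigma_D$ (your first bound) the argument coincides: $X_{\mathcal{I}(N)} = \rho_N \notin D$ forces $\sigma_D \leq \mathcal{I}(N)$. For the reverse inequality the paper runs a proof by contradiction: assuming $\sigma_D < \mathcal{I}(N)$, it argues that $\mathcal{I}(N-1) \leq \sigma_D$ (else $N$ would not be minimal), writes $\sigma_D = \mathcal{I}(N-1) + t^*$ with $t^* < \tau_N$, and then derives $\tau_N \leq t^*$ from the definition \eqref{eq:tau_n}, a contradiction. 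You instead prove the bound directly by locating, for each $t < \mathcal{I}(N)$, the unique $k$ with $t \in [\mathcal{I}(k-1),\mathcal{I}(k))$, observing $X_t \in \widetilde{B}_k$, and using $\widetilde{B}_k \subset D$ because $X_{\mathcal{I}(k-1)} \in D$ and $\widetilde{r}_k = \dist(X_{\mathcal{I}(k-1)},\partial D)$. This direct route is tighter: the paper's sub-step ``$\sigma_D < \mathcal{I}(N-1) \Rightarrow X_{\mathcal{I}(N-1)} \notin D$'' is stated without justification and would, as written, fail for a general càdlàg path that jumps out of $D$ and back in; your version of the argument is precisely what fills that gap. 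You also make explicit two points the paper leaves implicit: the pathwise identification $\rho_n = X_{\mathcal{I}(n)}$ (the preceding lemma gives only equality in law, so the coupling needs to be invoked to make the statement literal) and the irrelevance, up to a null set, of exits landing exactly on $\partial D$, via the density in Theorem \ref{teo:Blu}. Both observations are correct and strengthen the exposition without changing the substance.
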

\begin{proof}
	For the inequality $\geq$, note by definition of $N$ that
	\[
	X_{\mathcal{I}(N)} \notin D.
	\]
	Recall that $\sigma_D$ is the infimum time $t\geq 0$ such that $X_t \notin D$, then
	\[
	\mathcal{I}(N) \geq \sigma_D.
	\]
	For $\leq$ suppose by contradiction that $\sigma_D < \mathcal{I}(N)$. If $\sigma_D < \mathcal{I}(N-1)$, then
	\[
	X_{\mathcal{I}(N-1)} \notin D.
	\]
	This is a contradiction with the definition of $N$, because $N-1$ is a natural less than $N$ satisfying the above condition. Therefore $\mathcal{I}(N-1) \leq \sigma_D$ and this implies that there exists $t^* \geq 0$ such that 
	\[
	\mathcal{I}(N)>\sigma_D = \mathcal{I}(N-1) + t^*,
	\] 
	Using the definition of $\mathcal{I}(n)$, for $n \in \N$ and the supposition $\sigma_D < \mathcal{I}(N)$, one has
	\[
	t^{*} < \tau_N,
	\]
	but
	\[
	X_{\sigma_D} = X_{\mathcal{I}(N-1) + t^*} \notin D,
	\]
	therefore, from the definition of $\tau_N$ in \eqref{eq:tau_n},
	\[
	\tau_N \leq t^*,
	\]
	a contradiction. Therefore $\mathcal{I}(N-1) \leq \sigma_D$ and we can conclude that
	\[
	\mathcal{I}(N) = \sigma_D.
	\]
\end{proof}
\begin{rem}\label{rem:igualdades}
	From the relation between $X_{\mathcal{I}(n)}$ and $\rho_n$ for $n \in \N$, it follows that
	\[
	\E_x \left[\rho_{N}\right] = \E_x \left[X_{\mathcal{I}(N)}\right] = \E_x \left[X_{\sigma_D}\right].
	\]
\end{rem}
Remark \ref{rem:igualdades} give us a relation between $(X_{t})_{t\geq0}$ and $(\rho_{n})_{n\geq0}$. Figure \ref{fig:1} shows in an example the relation between WoS and isotropic $\alpha$-stable processes, exiting a bounded domain $D$.
\begin{figure}[h]\label{fig:1}

	\tikzset{every picture/.style={line width=0.75pt}} 
	
	\begin{tikzpicture}[x=0.75pt,y=0.75pt,yscale=-1.2,xscale=1.2]
		
		\draw   (195.24,55.73) .. controls (217.25,95.11) and (227.6,75.21) .. (240.42,68.57) .. controls (253.24,61.92) and (289.55,71.86) .. (274.05,137.07) .. controls (258.54,202.28) and (185.1,157.73) .. (110.07,205.3) .. controls (35.05,252.88) and (19.82,205.49) .. (46.3,117.58) .. controls (72.78,29.67) and (103.97,1.75) .. (133.95,3.47) .. controls (163.94,5.2) and (173.23,16.35) .. (195.24,55.73) -- cycle ;
		\draw  [color={rgb, 255:red, 255; green, 0; blue, 0 }  ,draw opacity=1 ] (180.02,124.21) .. controls (180.37,124) and (180.82,124.11) .. (181.03,124.46) .. controls (181.24,124.81) and (181.13,125.26) .. (180.78,125.47) .. controls (180.44,125.68) and (179.98,125.57) .. (179.77,125.22) .. controls (179.56,124.87) and (179.67,124.42) .. (180.02,124.21)(151.48,77.1) .. controls (177.85,61.13) and (212.17,69.55) .. (228.14,95.92) .. controls (244.12,122.28) and (235.69,156.61) .. (209.33,172.58) .. controls (182.96,188.56) and (148.64,180.13) .. (132.66,153.77) .. controls (116.69,127.4) and (125.11,93.08) .. (151.48,77.1) ;
		\draw [color={rgb, 255:red, 74; green, 144; blue, 226 }  ,draw opacity=1 ]   (106,93.8) -- (95.75,95.1) -- (98.7,97.12) -- (85.84,101.38) -- (87.28,84.36) -- (91.13,103.59) -- (107.97,107.36) -- (99.46,114.97) -- (108.95,115.04) -- (115.34,102.31) -- (121.17,103.62) -- (93,123.15) -- (103.15,123.38) -- (95.98,129.6) -- (112.65,123.47) -- (107.16,121.01) -- (117.44,132.83) -- (122.75,157.29) -- (121.71,160.44) -- (125.11,150.19) -- (107.45,142.5) -- (117.01,151.44) -- (111.79,151.97) -- (125.71,180.92) -- (126.41,167.14) -- (138.39,172.88) -- (135.05,148.33) -- (181.74,125.66) -- (173.3,103.52) -- (164.6,142.37) -- (150.83,112.77) -- (136.51,102.42) -- (159.9,81.48) -- (159.78,86.55) -- (171.83,88.91) -- (182.93,81.87) -- (198.61,84.71) -- (181.32,95.11) -- (185.65,110.7) -- (198.11,106.69) -- (224.57,122.14) -- (227.9,136.14) -- (239.01,137.97) -- (255.25,158.25) -- (295.67,145) ;
		\draw  [color={rgb, 255:red, 255; green, 0; blue, 0 }  ,draw opacity=1 ] (237.53,138.27) .. controls (237.78,138.11) and (238.11,138.2) .. (238.27,138.45) .. controls (238.42,138.71) and (238.34,139.04) .. (238.09,139.19) .. controls (237.83,139.34) and (237.5,139.26) .. (237.35,139.01) .. controls (237.19,138.75) and (237.27,138.42) .. (237.53,138.27)(220.67,110.45) .. controls (236.29,100.98) and (256.63,105.97) .. (266.09,121.59) .. controls (275.55,137.21) and (270.56,157.55) .. (254.94,167.01) .. controls (239.32,176.48) and (218.99,171.49) .. (209.52,155.87) .. controls (200.06,140.25) and (205.05,119.91) .. (220.67,110.45) ;
		\draw  [color={rgb, 255:red, 255; green, 0; blue, 0 }  ,draw opacity=1 ] (122.62,159.2) .. controls (122.88,159.04) and (123.22,159.12) .. (123.38,159.39) .. controls (123.54,159.65) and (123.46,159.99) .. (123.2,160.15) .. controls (122.93,160.31) and (122.59,160.22) .. (122.43,159.96) .. controls (122.27,159.7) and (122.36,159.36) .. (122.62,159.2)(105.24,130.52) .. controls (121.35,120.76) and (142.31,125.91) .. (152.07,142.01) .. controls (161.82,158.11) and (156.68,179.08) .. (140.57,188.83) .. controls (124.47,198.59) and (103.51,193.44) .. (93.75,177.34) .. controls (84,161.24) and (89.14,140.27) .. (105.24,130.52) ;
		\draw  [color={rgb, 255:red, 255; green, 0; blue, 0 }  ,draw opacity=1 ] (105.2,91.8) .. controls (105.56,91.58) and (106.04,91.7) .. (106.26,92.07) .. controls (106.49,92.43) and (106.37,92.91) .. (106,93.14) .. controls (105.63,93.36) and (105.16,93.24) .. (104.93,92.87) .. controls (104.71,92.51) and (104.83,92.03) .. (105.2,91.8)(80.85,51.62) .. controls (103.41,37.95) and (132.78,45.16) .. (146.45,67.72) .. controls (160.12,90.28) and (152.91,119.65) .. (130.35,133.32) .. controls (107.79,146.99) and (78.42,139.78) .. (64.75,117.22) .. controls (51.08,94.66) and (58.29,65.29) .. (80.85,51.62) ;
		\draw  [color={rgb, 255:red, 245; green, 166; blue, 35 }  ,draw opacity=1 ][fill={rgb, 255:red, 245; green, 166; blue, 35 }  ,fill opacity=1 ] (104.48,90.73) .. controls (105.14,90.31) and (106.02,90.51) .. (106.44,91.17) .. controls (106.86,91.83) and (106.67,92.71) .. (106,93.14) .. controls (105.34,93.56) and (104.46,93.36) .. (104.04,92.7) .. controls (103.62,92.03) and (103.81,91.15) .. (104.48,90.73) -- cycle ;
		\draw  [color={rgb, 255:red, 245; green, 166; blue, 35 }  ,draw opacity=1 ][fill={rgb, 255:red, 245; green, 166; blue, 35 }  ,fill opacity=1 ] (122.15,158.47) .. controls (122.81,158.05) and (123.69,158.25) .. (124.11,158.91) .. controls (124.53,159.58) and (124.33,160.46) .. (123.67,160.88) .. controls (123.01,161.3) and (122.13,161.1) .. (121.71,160.44) .. controls (121.29,159.77) and (121.48,158.89) .. (122.15,158.47) -- cycle ;
		\draw  [color={rgb, 255:red, 245; green, 166; blue, 35 }  ,draw opacity=1 ][fill={rgb, 255:red, 245; green, 166; blue, 35 }  ,fill opacity=1 ] (180.21,122.59) .. controls (180.88,122.17) and (181.76,122.37) .. (182.18,123.03) .. controls (182.6,123.69) and (182.4,124.57) .. (181.74,125) .. controls (181.07,125.42) and (180.2,125.22) .. (179.77,124.56) .. controls (179.35,123.89) and (179.55,123.01) .. (180.21,122.59) -- cycle ;
		\draw  [color={rgb, 255:red, 245; green, 166; blue, 35 }  ,draw opacity=1 ][fill={rgb, 255:red, 245; green, 166; blue, 35 }  ,fill opacity=1 ] (237.04,136.86) .. controls (237.71,136.44) and (238.59,136.64) .. (239.01,137.3) .. controls (239.43,137.96) and (239.23,138.84) .. (238.57,139.27) .. controls (237.91,139.69) and (237.03,139.49) .. (236.6,138.83) .. controls (236.18,138.16) and (236.38,137.28) .. (237.04,136.86) -- cycle ;
		\draw  [color={rgb, 255:red, 245; green, 166; blue, 35 }  ,draw opacity=1 ][fill={rgb, 255:red, 245; green, 166; blue, 35 }  ,fill opacity=1 ] (104.48,90.73) .. controls (105.36,90.17) and (106.52,90.43) .. (107.08,91.31) .. controls (107.64,92.19) and (107.37,93.36) .. (106.5,93.91) .. controls (105.62,94.47) and (104.45,94.21) .. (103.89,93.33) .. controls (103.34,92.45) and (103.6,91.29) .. (104.48,90.73) -- cycle ;
		\draw  [color={rgb, 255:red, 245; green, 166; blue, 35 }  ,draw opacity=1 ][fill={rgb, 255:red, 245; green, 166; blue, 35 }  ,fill opacity=1 ] (121.42,158.37) .. controls (122.3,157.81) and (123.47,158.07) .. (124.03,158.95) .. controls (124.58,159.83) and (124.32,161) .. (123.44,161.55) .. controls (122.56,162.11) and (121.4,161.85) .. (120.84,160.97) .. controls (120.28,160.09) and (120.55,158.93) .. (121.42,158.37) -- cycle ;
		\draw  [color={rgb, 255:red, 245; green, 166; blue, 35 }  ,draw opacity=1 ][fill={rgb, 255:red, 245; green, 166; blue, 35 }  ,fill opacity=1 ] (180.21,122.59) .. controls (181.09,122.03) and (182.26,122.29) .. (182.82,123.17) .. controls (183.37,124.05) and (183.11,125.22) .. (182.23,125.77) .. controls (181.35,126.33) and (180.19,126.07) .. (179.63,125.19) .. controls (179.07,124.31) and (179.34,123.15) .. (180.21,122.59) -- cycle ;
		\draw  [color={rgb, 255:red, 245; green, 166; blue, 35 }  ,draw opacity=1 ][fill={rgb, 255:red, 245; green, 166; blue, 35 }  ,fill opacity=1 ] (237.08,136.93) .. controls (237.96,136.37) and (239.12,136.64) .. (239.68,137.51) .. controls (240.24,138.39) and (239.97,139.56) .. (239.1,140.12) .. controls (238.22,140.67) and (237.05,140.41) .. (236.49,139.53) .. controls (235.94,138.65) and (236.2,137.49) .. (237.08,136.93) -- cycle ;
		\draw  [color={rgb, 255:red, 245; green, 166; blue, 35 }  ,draw opacity=1 ][fill={rgb, 255:red, 245; green, 166; blue, 35 }  ,fill opacity=1 ] (294.66,143.41) .. controls (295.54,142.85) and (296.7,143.11) .. (297.26,143.99) .. controls (297.82,144.87) and (297.56,146.03) .. (296.68,146.59) .. controls (295.8,147.15) and (294.63,146.89) .. (294.07,146.01) .. controls (293.52,145.13) and (293.78,143.97) .. (294.66,143.41) -- cycle ;
		
		\draw (99.83,159.69) node [anchor=north west][inner sep=0.75pt]  [font=\scriptsize]  {$\rho _{1}$};
		\draw (184.82,126.57) node [anchor=north west][inner sep=0.75pt]  [font=\scriptsize]  {$\rho _{2}$};
		\draw (240.91,122.41) node [anchor=north west][inner sep=0.75pt]  [font=\scriptsize]  {$\rho _{3}$};
		\draw (301.85,144.45) node [anchor=north west][inner sep=0.75pt]  [font=\scriptsize]  {$\rho _{4}$};
		\draw (99.17,73.96) node [anchor=north west][inner sep=0.75pt]  [font=\scriptsize]  {$x$};
		\draw (50.46,158.09) node [anchor=north west][inner sep=0.75pt]  [font=\scriptsize]  {$D$};

	\end{tikzpicture}
\caption{Illustration of isotropic $\alpha$-stable and WoS processes starting at $x$ exiting a domain $D$. The blue line represents the $\alpha$-stable process $(X_t)_{t\geq0}$, the orange dots are the WoS process $(\rho_n)_{n\geq 0}$ and the red balls are given by Definition \ref{defn:WoS}. In this case $N=4$ and $\rho_4 = X_{\sigma_D}$.}
\end{figure}
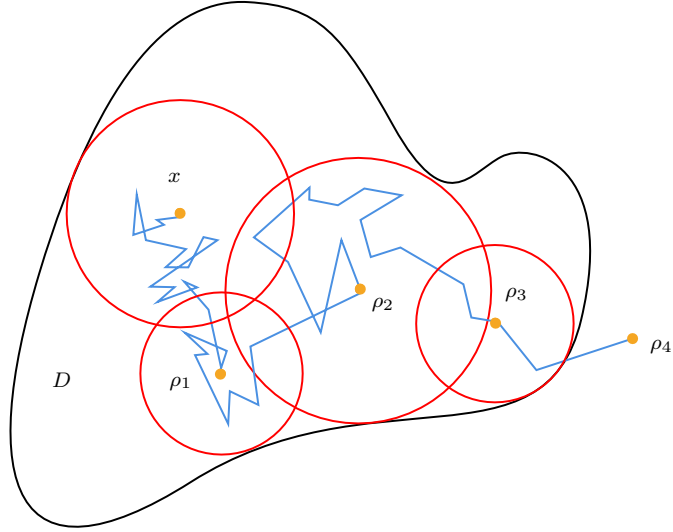
\section{Stochastic representation of the Fractional Laplacian}\label{Sect:5}

\subsection{Stochastic Representation}
Recall Problem \eqref{eq:1.1}. The following theorem gives an stochastic representation of the solution of problem \eqref{eq:1.1} from the process $(X_t)_{t\geq 0}$. The proof of this Theorem can be found in \cite{AK1}
\begin{teo}[\cite{AK1}, Theorem 6.1]\label{teo:sol}
	Let $d \geq 2$ and assume that $D$ is a bounded domain in $\R^d$. Additionally, assume \eqref{Hg0} and \eqref{Hf0}. Then there exist a {\bf unique continuous solution} for \eqref{eq:1.1} in $L^1_\alpha(\R^d)$, given by the explicit formula
	\begin{equation}\label{hipotesis_imp_2}
		u(x) = \E_x \left[ g(X_{\sigma_D}) \right] + \E_x \left[ \int_0^{\sigma_D} f(X_s) ds \right],
	\end{equation}
	valid for any $x \in D$.
\end{teo}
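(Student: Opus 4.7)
The plan is to verify existence by directly checking that the stochastic formula satisfies both the equation and the exterior condition, and then prove uniqueness via a probabilistic maximum-principle argument.

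\textbf{Well-definedness.} First I would check both expectations in \eqref{hipotesis_imp_2} are finite. For the source term, since $f\in C^{\alpha+\varepsilon_0}(\overline{D})$ is bounded and $\mathbb{E}_x[\sigma_D]<\infty$ (it is well-known that $\sigma_D$ has exponential moments in bounded domains for the $\alpha$-stable process), the integral is controlled by $\|f\|_{L^\infty(D)}\mathbb{E}_x[\sigma_D]$. For the exterior term, the Ikeda--Watanabe formula expresses the law of $X_{\sigma_D}$ on $D^c$ as an integral against the Lévy kernel of $\Pi$ in \eqref{eq:levy_measure}. That kernel is comparable to $|y|^{-d-\alpha}$ at infinity, so $\mathbb{E}_x[|g(X_{\sigma_D})|]\lesssim \int_{D^c}|g(y)|(1+|y|)^{-d-\alpha}\,dy$, which is exactly the hypothesis \eqref{Hg0}.

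\textbf{Exterior condition and PDE.} For $x\in D^c$ one has $\sigma_D=0$ $\mathbb{P}_x$-a.s. and $X_0=x$, so the formula immediately gives $u(x)=g(x)$. To check $(-\Delta)^{\alpha/2}u=f$ in $D$, I first use Schauder-type regularity for the fractional Laplacian (Ros-Oton--Serra type estimates), combined with assumption \eqref{Hf0}, to show $u\in C^{\alpha+\varepsilon_0}_{\mathrm{loc}}(D)$, which places $u$ in the pointwise domain of the generator $\mathcal{L}=-(-\Delta)^{\alpha/2}$. Then for any $x\in D$ and a small ball $B=B(x,r)\Subset D$, the strong Markov property applied to \eqref{hipotesis_imp_2} yields
\[
u(x)=\mathbb{E}_x\!\left[u(X_{\sigma_B})\right]+\mathbb{E}_x\!\left[\int_0^{\sigma_B} f(X_s)\,ds\right].
\]
On the other hand, Dynkin's formula for the $\alpha$-stable process gives $\mathbb{E}_x[u(X_{\sigma_B})]-u(x)=\mathbb{E}_x[\int_0^{\sigma_B}\mathcal{L}u(X_s)\,ds]$. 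Subtracting, dividing by $\mathbb{E}_x[\sigma_B]$, and letting $r\downarrow 0$ (using continuity of $\mathcal{L}u$ and $f$) forces $\mathcal{L}u(x)=-f(x)$, i.e. $(-\Delta)^{\alpha/2}u(x)=f(x)$.

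\textbf{Continuity and uniqueness.} Continuity of $u$ on $D$ follows from continuity of $f$, continuity of the map $x\mapsto \mathbb{P}_x(X_{\sigma_D}\in\cdot)$ on $D$ (a consequence of the Feller property of $X$), and dominated convergence using the $|y|^{-d-\alpha}$ tail. Continuity into $D^c$ is automatic. For uniqueness, suppose $u_1,u_2$ are two continuous $L^1_\alpha$ solutions and set $w=u_1-u_2$: then $(-\Delta)^{\alpha/2}w=0$ in $D$ and $w\equiv 0$ on $D^c$. Applying Dynkin to $w$ stopped at $\sigma_D$ (after a standard localization via $\sigma_{D_n}$ with $D_n\uparrow D$, using that $\mathcal{L}w=0$ on $D$) gives $w(x)=\mathbb{E}_x[w(X_{\sigma_D})]=0$ for every $x\in D$.

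\textbf{Main obstacle.} The delicate step is the regularity input needed to justify applying the pointwise Dynkin formula and to pass to the generator limit. The nonlocality of $(-\Delta)^{\alpha/2}$ means that bounds on $u$ at a single point of $D$ involve the values of $u$ everywhere, so one must simultaneously control the interior smoothness (via \eqref{Hf0}) and the far-field behavior (via \eqref{Hg0}). Without this combined control, neither the Schauder estimates nor the dominated-convergence passage in the generator limit go through; this is precisely why the two assumptions \eqref{Hg0}--\eqref{Hf0} are jointly imposed.
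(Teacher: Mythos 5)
The paper does not give its own proof of this theorem: the statement is quoted as Theorem~6.1 of \cite{AK1}, and the surrounding text explicitly remarks that ``The proof of this Theorem can be found in \cite{AK1}.'' So there is no in-paper argument to compare against, and your sketch is being judged against the literature rather than against this paper.

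With that caveat, your outline follows the standard probabilistic route and is broadly consistent with the tools used in \cite{AK1}: the Ikeda--Watanabe formula to tie finiteness of $\E_x[|g(X_{\sigma_D})|]$ to \eqref{Hg0}, the strong Markov property plus Dynkin's formula to derive the equation inside $D$, and a stopped-martingale argument for uniqueness. Two points deserve more care. First, the integrability estimate $\E_x[|g(X_{\sigma_D})|]\lesssim\int_{D^c}|g(y)|(1+|y|)^{-d-\alpha}\,dy$ via Ikeda--Watanabe tacitly uses that the process exits $D$ a.s.\ by a jump, so that $X_{\sigma_D}\in D^c$ with no mass on $\partial D$; this ``no creeping'' fact does hold for the isotropic $\alpha$-stable process on bounded $D$, but it should be cited rather than left implicit. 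Second, and more seriously, the Dynkin step is a bootstrap: to apply Dynkin pointwise and pass $r\downarrow 0$ you need $u\in C^{\alpha+\varepsilon_0}_{\mathrm{loc}}(D)\cap L^1_\alpha(\R^d)$ a priori, yet that regularity is normally obtained \emph{after} one knows $u$ satisfies the equation in some weak sense. The cleaner version splits $u=u_1+u_2$ with $u_1(x)=\E_x[g(X_{\sigma_D})]$ $\alpha$-harmonic in $D$ and $u_2$ the Green potential of $f$, establishes regularity of each piece separately (interior Schauder estimates for $u_2$ from \eqref{Hf0}, and smoothness of $\alpha$-harmonic functions for $u_1$), and only then runs the Dynkin computation. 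Your sketch collapses this into a single step: the end result is correct, but as written the regularity of $u$ is asserted rather than derived, which is exactly the ``main obstacle'' you flag at the end. Filling that gap in the order indicated would make the argument complete.
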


\medskip

The previous representation can be expressed in terms of the WoS process. For this define the expected occupation measure of the stable process prior to exiting a ball of radius $r>0$ centered in $x \in \R^d$ as follows:  
\begin{equation}\label{eq:V_r(x)}
	V_r(x,dy) := \int_0^{\infty} \PP_x\left(X_t \in dy, t < \sigma_{B(x,r)}\right) dt, \qquad x \in \R^d, \quad |y|<1, \quad r > 0.
\end{equation}
We have the following result for $V_1(0,dy)$.
\begin{teo}[\cite{AK1}, Theorem 6.2]\label{teo:V0dy}
	The measure $V_1(0,dy)$ is given for $|y|<1$, by 
	\begin{equation}\label{eq:V_1(0)}
		V_1(0,dy) = 2^{-\alpha} \pi^{-d/2} \frac{\Gamma(d/2)}{\Gamma(\alpha/2)^2}|y|^{\alpha-d}\left( \int_0^{|y|^{-2}-1} (u+1)^{-d/2} u^{\alpha/2-1}du \right)dy.
	\end{equation}
\end{teo}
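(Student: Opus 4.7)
The starting point is to recognize that $V_1(0,dy)$ is, by definition, the expected occupation measure of the isotropic $\alpha$-stable process killed upon leaving $B(0,1)$. Hence it admits a density with respect to Lebesgue measure, namely the Green function $G^{B(0,1)}(0,y)$ of the killed process. The goal therefore reduces to identifying $G^{B(0,1)}(0,y)$ with the explicit expression on the right-hand side of \eqref{eq:V_1(0)}.

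The plan is to use the standard balayage (or sweeping) identity
\[
G^{B(0,1)}(0,y) = U_\alpha(0,y) - \int_{B(0,1)^c} U_\alpha(z,y)\,\PP_0\bigl(X_{\sigma_{B(0,1)}}\in dz\bigr),
\]
where $U_\alpha(x,y) = C_{d,\alpha}\,|x-y|^{\alpha-d}$ is the Riesz potential (the potential density of the free $\alpha$-stable process) with the classical constant $C_{d,\alpha} = \Gamma((d-\alpha)/2)/\bigl(2^\alpha \pi^{d/2}\Gamma(\alpha/2)\bigr)$. This identity is a direct consequence of the strong Markov property applied at the exit time $\sigma_{B(0,1)}$, together with the fact that the free potential $U_\alpha(\cdot,y)$ is harmonic off $\{y\}$ for the stable generator. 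Substituting the exit distribution furnished by Theorem \ref{teo:Blu}, the problem collapses to the explicit evaluation of
\[
I(y) := \int_{|z|>1} |z-y|^{\alpha-d}\,|z|^{-d}\,(|z|^2-1)^{-\alpha/2}\,dz, \qquad |y|<1.
\]

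The main technical obstacle is the computation of $I(y)$. I would first exploit rotational invariance to fix $y = |y|\,e_1$ and pass to spherical coordinates, separating the radial variable $r=|z|\in (1,\infty)$ from the angular integration over $\mathbb{S}^{d-1}$. The angular piece can be handled by a Funk--Hecke / Gegenbauer expansion of $|z-y|^{\alpha-d}$ in ultraspherical harmonics, with only the zonal (constant) mode surviving because the rest of the integrand depends only on $|z|$. This reduces $I(y)$ to a one-dimensional integral in $r$, which, after the substitution $u = (r^2-1)^{-1}\cdot(1-|y|^2)\cdot|y|^{-2}$ or the simpler $t = 1/r^2$, becomes a Beta integral with integration endpoint $|y|^{-2}-1$ (this is where the upper limit in \eqref{eq:V_1(0)} originates).

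Finally, I would collect the constants coming from $C_{d,\alpha}$, from the Blumenthal--Getoor--Ray constant $\pi^{-(d/2+1)}\Gamma(d/2)\sin(\pi\alpha/2)$, from the surface area $|\mathbb{S}^{d-1}| = 2\pi^{d/2}/\Gamma(d/2)$, and from the Beta/Gamma identities (using the reflection formula $\Gamma(\alpha/2)\Gamma(1-\alpha/2) = \pi/\sin(\pi\alpha/2)$). The free potential term $C_{d,\alpha}|y|^{\alpha-d}$ should combine with the ``missing part'' of the Beta integral (from $|y|^{-2}-1$ to $\infty$) to leave exactly the finite-limit integral displayed in \eqref{eq:V_1(0)}, and the constants should collapse to $2^{-\alpha}\pi^{-d/2}\Gamma(d/2)/\Gamma(\alpha/2)^2$. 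The constant-tracking and the angular--radial decoupling are the only genuine difficulties; everything else is classical Riesz potential theory.
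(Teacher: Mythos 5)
The paper does not actually prove this theorem: it is imported verbatim from \cite{AK1}, Theorem 6.2, which in turn goes back to Blumenthal--Getoor--Ray and M.~Riesz. Your balayage strategy, writing
\[
G^{B(0,1)}(0,y) \;=\; U_\alpha(0,y) - \E_0\!\left[U_\alpha\!\left(X_{\sigma_{B(0,1)}},y\right)\right]
\]
via the occupation-time decomposition at $\sigma_{B(0,1)}$ (only the strong Markov property is needed here, not $\alpha$-harmonicity of $U_\alpha(\cdot,y)$), and then substituting the exit law from Theorem~\ref{teo:Blu}, is precisely the classical derivation, and your constants are consistent: with $\mathcal{B}_{d,\alpha}=2^{-\alpha}\pi^{-d/2}\Gamma(d/2)/\Gamma(\alpha/2)^2$ one indeed has $\mathcal{B}_{d,\alpha}\,B\!\left(\tfrac{\alpha}{2},\tfrac{d-\alpha}{2}\right)=C_{d,\alpha}$, so the free-potential term exactly supplies the ``missing tail'' $\int_{|y|^{-2}-1}^{\infty}(u+1)^{-d/2}u^{\alpha/2-1}\,du$ of the Beta kernel, as you anticipate.

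The genuine gap is the claim that the Funk--Hecke reduction of
\[
I(y) = \int_{|z|>1} |z-y|^{\alpha-d}\,|z|^{-d}\,\bigl(|z|^2-1\bigr)^{-\alpha/2}\,dz
\]
``becomes a Beta integral'' after passing to the radial variable. After the angular average over $|z|=r$ you do not get an elementary function of $r$: the zonal integral of $|r\omega-|y|e_1|^{\alpha-d}$ over $\mathbb{S}^{d-1}$ is a Gauss ${}_2F_1$ in $\bigl(2r|y|/(r^2+|y|^2)\bigr)^2$ (or, equivalently, a Gegenbauer generating-function sum), and collapsing the subsequent $r$-integral of this ${}_2F_1$ against $r^{-1}(r^2-1)^{-\alpha/2}$ to the incomplete-Beta form is exactly where the labour lies and cannot be waved away with a single substitution. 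A cleaner path, and essentially what Riesz/BGR do, is to first apply the Kelvin inversion $z\mapsto z/|z|^2$, which uses $|z-y|=|z|\,|y|\,\bigl|z/|z|^2 - y/|y|^2\bigr|$ to give
\[
I(y) \;=\; |y|^{\alpha-d}\int_{|w|<1} |w-y^*|^{\alpha-d}\,(1-|w|^2)^{-\alpha/2}\,dw, \qquad y^* = y/|y|^2,
\]
so that all the $r$-dependence separates before the angular step. As written, your argument is a correct high-level plan whose central integral identity is asserted rather than established; it is not yet a proof.
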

Denote $\displaystyle V_r(x,f(\cdot)) = \int_{|y-x|<r}f(y) V_r(x,dy)$ for a bounded measurable function $f$. $V_r(x,f(\cdot))$ defines the expected value of $f$ under the measure $V_{r}(x,dy)$ over the ball $B(x,r)$. An important property of this expected value is the following: for $r>0$ and $x \in \R^d$
\begin{equation}\label{eq:V_prop}
	V_r(x,f(\cdot)) = V_{1}(0,f(x+r\cdot)).
\end{equation}

The proof of this property can be found in \cite{AK1}. The following Lemma uses the WoS process and the Theorem \ref{teo:V0dy}. Recall that $(\rho_n)_{n=1,...,N}$ represents the WoS process defined in Section \ref{Sect:4}, and $r_n = \dist(\rho_n,\partial D)$.
\begin{lema} [\cite{AK1}, Lema 6.3] \label{lem:sol2}
	For $x \in D$, $g \in L_{\alpha}^1(D^c)$ and $f \in C^{\alpha + \varepsilon_0}(\overline{D})$ we have the representation
	\begin{equation}
		u(x) = \E_{x} \left[ g(\rho_N) \right] + \E_{x}\left[\sum_{n=1}^{N} r_n^{\alpha} V_1(0,f(\rho_{n-1} +r_n\cdot))\right].
	\end{equation}
\end{lema}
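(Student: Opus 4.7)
The plan is to start from the stochastic representation of $u$ in Theorem \ref{teo:sol} and convert each of the two terms into a WoS-based expression. For the boundary term $\E_x[g(X_{\sigma_D})]$, the identification is immediate: by Lemma \ref{lem:INsigmaD} we have $\sigma_D = \mathcal{I}(N)$ almost surely, and the lemma proved in Section \ref{Sect:4} shows that $\rho_N$ and $X_{\mathcal{I}(N)}$ have the same distribution under $\PP_x$. Consequently $\E_x[g(X_{\sigma_D})] = \E_x[g(X_{\mathcal{I}(N)})] = \E_x[g(\rho_N)]$, provided $g$ is integrable along these laws, which is guaranteed by \eqref{Hg0} (this is essentially the content of Remark \ref{rem:igualdades}).

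For the source term, the key is to decompose the time interval $[0,\sigma_D]$ according to the excursions between successive balls $\widetilde{B}_n$. Since $N<\infty$ almost surely by Theorem \ref{teo:N} and $\mathcal{I}(N)=\sigma_D$, I would write
\[
\int_0^{\sigma_D} f(X_s)\,ds \;=\; \sum_{n=1}^{N}\int_{\mathcal{I}(n-1)}^{\mathcal{I}(n)} f(X_s)\,ds.
\]
Then I would apply the strong Markov property at the stopping time $\mathcal{I}(n-1)$: conditionally on $\mathcal{F}_{\mathcal{I}(n-1)}$, the shifted process $(X_{\mathcal{I}(n-1)+t})_{t\geq 0}$ is an isotropic $\alpha$-stable process started from $X_{\mathcal{I}(n-1)}$, and $\tau_n$ is its exit time from $B(X_{\mathcal{I}(n-1)},\widetilde r_n)$. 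By the definition of the expected occupation measure in \eqref{eq:V_r(x)},
\[
\E_x\!\left[\int_{\mathcal{I}(n-1)}^{\mathcal{I}(n)} f(X_s)\,ds\,\Big|\,\mathcal{F}_{\mathcal{I}(n-1)}\right] \;=\; V_{\widetilde r_n}\!\bigl(X_{\mathcal{I}(n-1)},f(\cdot)\bigr).
\]
Next, I would combine translation invariance of the stable process with the scaling property \eqref{eq:scaling} (as already used in \eqref{eq:scaling1}--\eqref{eq:scaling2}): a change of variable $u=r^{-\alpha}s$ in the definition of $V_r$ yields the rescaling $V_r(x,f(\cdot))=r^{\alpha}V_1(0,f(x+r\,\cdot))$, which is the content of \eqref{eq:V_prop} (the $r^{\alpha}$ factor being the origin of the $r_n^{\alpha}$ in the statement). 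Applied to each excursion this gives $\widetilde r_n^{\alpha}\,V_1(0,f(X_{\mathcal{I}(n-1)}+\widetilde r_n\,\cdot))$.

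Finally, using again that $(X_{\mathcal{I}(n-1)},\widetilde r_n)_{n\geq 1}$ and $(\rho_{n-1},r_n)_{n\geq 1}$ have the same joint distribution (this is an induction exactly as in the proof of $\rho_n \overset{d}{=} X_{\mathcal{I}(n)}$), I would sum over $n$ and take total expectation to obtain
\[
\E_x\!\left[\int_0^{\sigma_D} f(X_s)\,ds\right] \;=\; \E_x\!\left[\sum_{n=1}^{N} r_n^{\alpha}\,V_1\bigl(0,f(\rho_{n-1}+r_n\,\cdot)\bigr)\right],
\]
which together with the boundary term finishes the proof. The main obstacle is the interchange of expectation and the random sum: this requires a dominated/Fubini argument using $\|f\|_{L^\infty(\overline D)}<\infty$ (from \eqref{Hf0}), the geometric tail $\PP_x(N>n)\leq(1-\widetilde p)^n$ from Theorem \ref{teo:N}, and the bound $V_{\widetilde r_n}(X_{\mathcal{I}(n-1)},|f|)\leq\|f\|_\infty\,\E[\tau_n\mid\mathcal{F}_{\mathcal{I}(n-1)}]$ together with the moment estimate for $\tau_n=\widetilde r_n^{\alpha}\sigma_{B(0,1)}$ (using that $\widetilde r_n\leq\diam(D)$ and $\E_0[\sigma_{B(0,1)}]<\infty$). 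Once integrability is secured, the strong Markov step and the scaling relation do the rest.
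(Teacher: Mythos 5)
The paper itself does not prove this lemma; it quotes it from Kyprianou, Osojnik, and Shardlow \cite{AK1} (their Lemma~6.3), so there is no internal proof to compare against. Your blind reconstruction recovers essentially the argument given there, and it is correct: you split $[0,\sigma_D]$ at the successive exit times $\mathcal{I}(n)$ using $\mathcal{I}(N)=\sigma_D$ a.s.\ (Lemma~\ref{lem:INsigmaD}), apply the strong Markov property at each $\mathcal{I}(n-1)$ to turn the conditional expectation of the $n$-th excursion integral into $V_{\widetilde{r}_n}(X_{\mathcal{I}(n-1)},f(\cdot))$, rescale through \eqref{eq:V_prop} to obtain $\widetilde{r}_n^{\alpha}\,V_1(0,f(X_{\mathcal{I}(n-1)}+\widetilde{r}_n\,\cdot))$, and finally pass to the WoS chain through the coupling under which $\rho_n$ and $X_{\mathcal{I}(n)}$ are equal in law, together with $\rho_N=X_{\sigma_D}$ for the boundary term. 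Two small points worth spelling out in a careful write-up: the strong Markov identity holds only on the event $\{N\geq n\}$, so the sum should carry the $\mathcal{F}_{\mathcal{I}(n-1)}$-measurable indicator ${\bf 1}_{\{n\leq N\}}$; and the Fubini/Tonelli justification is cleanest by observing that $r_n^{\alpha}\,|V_1(0,f(\rho_{n-1}+r_n\,\cdot))|\leq\norm{f}_{L^\infty(D)}\,r_n^{\alpha}\,\kappa_{d,\alpha}$ and that $\E_x\big[\sum_{n=1}^{N}r_n^{\alpha}\kappa_{d,\alpha}\big]=\E_x[\sigma_D]<\infty$ for bounded $D$, which is exactly the identity recorded as part~(1) of Lemma~\ref{lem:aux_f}.
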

\begin{rem}\label{rem:WOSf}
	Recall that $\rho_n$ and $X_{\mathcal{I}(n)}$ are equal on law under $\PP_x$ for all $n \in \N$. Therefore we can write
	\begin{equation}\label{eq:solWOS}
		u(x) = \E_{x} \left[ g\left(X_{\mathcal{I}(N)}\right) \right] + \E_{x}\left[\sum_{n=1}^{N} r_n^{\alpha} V_1\left(0,f\left(X_{\mathcal{I}(n-1)} +r_n\cdot\right)\right)\right].
	\end{equation}
\end{rem}

\subsection{Equivalent representations of non-homogeneous solution} Consider again the problem \eqref{eq:1.1}. Remember from Remark \ref{rem:WOSf} that its solution can be written as
\begin{equation*}
	u(x) = \E_{x} \left[ g\left(X_{\mathcal{I}(N)}\right) \right] + \E_{x}\left[\sum_{n=1}^{N} r_n^{\alpha} V_1(0,f(X_{\mathcal{I}(n-1)} +r_n\cdot))\right].
\end{equation*}
Notice also that from the definition of $V_1(0,f(\cdot))$, it can be expressed as the expectation of $f$ under the measure $V_1(0,dy)$ on $B(0,1)$. This measure is not necessarily a probability measure, so we are going to normalize the measure $V_1(0,dy)$. For this define for all $d \geq 2$, $d \in \N$ and $\alpha \in (0,2)$
\[
\kappa_{d,\alpha} = \int_{B(0,1)} V_1(0,dy).
\]
In the following Lemma we prove that $\kappa_{d,\alpha}$ is positive and finite


\begin{lem}
	For all $d\geq 2$ and $\alpha \in (0,2)$, we have that $0<\kappa_{d,\alpha} <+\infty$.
\end{lem}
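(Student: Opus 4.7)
The plan is to insert the explicit density from Theorem \ref{teo:V0dy} into the definition of $\kappa_{d,\alpha}$ and reduce the resulting iterated integral to a single Beta function. Positivity will follow by inspection, and finiteness will follow from the integrability of Beta-type integrals; in fact this will yield a closed-form evaluation of $\kappa_{d,\alpha}$.

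First I would establish strict positivity. From \eqref{eq:V_1(0)}, the density of $V_1(0,dy)$ is a product of strictly positive factors on $\{0<|y|<1\}$: the constant $2^{-\alpha}\pi^{-d/2}\Gamma(d/2)/\Gamma(\alpha/2)^2$ is positive, the factor $|y|^{\alpha-d}$ is positive for $y\neq 0$, and the inner integral $\int_0^{|y|^{-2}-1}(u+1)^{-d/2}u^{\alpha/2-1}du$ is strictly positive since $|y|<1$ makes the upper limit positive and the integrand is positive on that interval. Integrating a non-negative density that is strictly positive on a set of positive Lebesgue measure immediately gives $\kappa_{d,\alpha}>0$.

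For finiteness I would pass to spherical coordinates using $|\mathbb{S}^{d-1}|=2\pi^{d/2}/\Gamma(d/2)$, which collapses the normalization prefactor and yields
\[
\kappa_{d,\alpha}=\frac{2^{1-\alpha}}{\Gamma(\alpha/2)^{2}}\int_{0}^{1}r^{\alpha-1}\!\int_{0}^{r^{-2}-1}(u+1)^{-d/2}u^{\alpha/2-1}\,du\,dr.
\]
Since the integrand is non-negative, Tonelli's theorem justifies swapping the order of integration. The condition $u<r^{-2}-1$ is equivalent to $r<(u+1)^{-1/2}$, so the inner $r$-integral evaluates to $(u+1)^{-\alpha/2}/\alpha$ and what remains is a standard Beta integral
\[
\int_{0}^{\infty}(u+1)^{-(d+\alpha)/2}u^{\alpha/2-1}du=B\!\left(\tfrac{\alpha}{2},\tfrac{d}{2}\right)=\frac{\Gamma(\alpha/2)\,\Gamma(d/2)}{\Gamma((d+\alpha)/2)},
\]
which is finite as $\alpha/2>0$ (integrability at $0$) and $d/2>0$ (decay $u^{-d/2-1}$ at infinity). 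Combining these computations gives the explicit value
\[
\kappa_{d,\alpha}=\frac{2^{1-\alpha}\,\Gamma(d/2)}{\alpha\,\Gamma(\alpha/2)\,\Gamma((d+\alpha)/2)}\in(0,+\infty).
\]

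The calculation is essentially routine; the only point deserving attention is the Fubini/Tonelli exchange, which is immediate by non-negativity of the integrand. If a closed form is not required, an even quicker route is to bound the inner integral uniformly by the convergent integral $\int_{0}^{\infty}(u+1)^{-d/2}u^{\alpha/2-1}du=B(\alpha/2,(d-\alpha)/2)$ (finite since $0<\alpha<2\le d$) and then use $\int_{0}^{1}r^{\alpha-1}dr=1/\alpha$ to obtain finiteness without the explicit evaluation.
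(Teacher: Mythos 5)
Your proof is correct and takes a genuinely different route from the paper. The paper also inserts the explicit density from Theorem \ref{teo:V0dy}, but instead of exchanging the order of integration it performs the change of variables $u=(1-t)/t$ in the inner integral, identifies the result as $B\bigl(\tfrac{d}{2}-\tfrac{\alpha}{2},\tfrac{\alpha}{2}\bigr)\bigl(1-I(|y|^2;\cdot,\cdot)\bigr)$, observes $0\le 1-I\le 1$, and then bounds $\kappa_{d,\alpha}$ by a constant times $\int_{B(0,1)}|y|^{\alpha-d}\,dy<\infty$. Your approach is cleaner in one important respect: the Tonelli exchange collapses the double integral to a single Beta integral and yields a closed-form evaluation
\[
\kappa_{d,\alpha}=\frac{2^{1-\alpha}\,\Gamma(d/2)}{\alpha\,\Gamma(\alpha/2)\,\Gamma((d+\alpha)/2)},
\]
rather than merely an upper bound. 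Your ``quick route'' at the end, by contrast, is essentially the paper's argument in disguise: the uniform bound $B(\alpha/2,(d-\alpha)/2)$ is (by symmetry of the Beta function) the same constant $B\bigl(\tfrac{d}{2}-\tfrac{\alpha}{2},\tfrac{\alpha}{2}\bigr)$ that the paper extracts, and the remaining radial integral is just the polar form of $\int_{B(0,1)}|y|^{\alpha-d}\,dy$. In both the main argument and the shortcut, the hypothesis $d\ge 2>\alpha$ is what makes the relevant Beta parameter $(d-\alpha)/2$ positive, and you correctly flag this. All the computations (the change of order of integration, the inner $r$-integral evaluating to $(u+1)^{-\alpha/2}/\alpha$, and the resulting Beta identity with $z=\alpha/2$, $w=d/2$) check out.
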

\begin{proof}
	Notice first from Theorem \ref{teo:Blu} that
	\[
	\kappa_{d,\alpha} = \widetilde{c}_{d,\alpha} \int_{B(0,1)} |y|^{\alpha-d} \left(\int_{0}^{|y|^{-2}-1} (u+1)^{-d/2}u^{\alpha/2-1}du \right)dy,
	\]
	where $\displaystyle \widetilde{c}_{d,\alpha} = 2^{-\alpha}\pi^{-d/2} \frac{\Gamma(d/2)}{\Gamma(\alpha/2)^2}$. Now we work with the interior integral. With a change of variables $\displaystyle u = \frac{1-t}{t}$ and integral properties one has:
	\begin{align*}
		 &\int_1^{|y|^2} t^{d/2} \left(\frac{1-t}{t}\right)^{\alpha/2-1}(-t^{-2})dt\\
		 &\qquad = \int_0^1 t^{d/2-\alpha/2-1}(1-t)^{\alpha/2-1}dt-\int_0^{|y|^2}t^{d/2-\alpha/2-1}(1-t)^{\alpha/2-1}dt.
	\end{align*}
	For $z,w > 0$, $x \in [0,1]$ let $B(z,w)$ and $I(x; z,w)$ be the Beta and the Incomplete Beta functions respectively, defined as
	\begin{align*}
		B(z,w) &:= \int_0^1 u^{z-1}(1-u)^{w-1}du,\\
		I(x;z,w) &:= \frac{1}{B(z,w)} \int_0^{x} u^{z-1}(1-u)^{w-1} du.
	\end{align*}
	For further details of these functions the reader can consult \cite{beta}. Notice that $\kappa_{d,\alpha}$ can be written in terms of $B(z,w)$ and $I(x;z,w)$. Indeed
	\begin{equation*}
		\kappa_{d,\alpha} =\widetilde{c}_{d,\alpha} B \left(\frac d2-\frac\alpha2,\frac\alpha2 \right)\int_{B(0,1)} |y|^{\alpha-d}\left( 1-I \left(|y|^2;\frac d2-\frac\alpha2,\frac\alpha2 \right)\right)dy.
	\end{equation*}
	Note by property of Beta function that
	\[
	B\left(\frac{d}{2}-\frac{\alpha}{2},\frac{\alpha}{2}\right) = \frac{\Gamma\left(\frac{d}{2}-\frac{\alpha}{2}\right)\Gamma\left(\frac{\alpha}{2}\right)}{\Gamma\left(\frac{d}{2}\right)}.
	\]
	The Gamma function is well defined and positive on $(0,\infty)$. If $d>\alpha$ then
	\[
	0 < B\left(\frac{d}{2}-\frac{\alpha}{2},\frac{\alpha}{2}\right) < + \infty.
	\]
	On the other hand side, note by the definition of $I(x;z,w)$ that for $x < 1$,
	\[
	0 \leq I(x;z,w) < \frac{1}{B(z,w)} \int_0^1 u^{z-1} (1-u)^{w-1}du = 1,
	\]
	Then for all $|y|^2<1$,
	\[
	0 < 1 - I \left(|y|^2; \frac{d}{2}-\frac{\alpha}{2}, \frac{\alpha}{2}\right) \leq 1.
	\]
	Therefore in $\kappa_{d,\alpha}$ we are integrating the multiplication of two positive functions over a set of positive measure. This implies that
	\[
	0 < \kappa_{d,\alpha} \leq \widetilde{c}_{d,\alpha} B\left(\frac{d}{2}-\frac{\alpha}{2},\frac{\alpha}{2}\right) \int_{B(0,1)} |y|^{\alpha - d} dy.
	\]
	The above integral can be calculated using a change of variables in spherical coordinates, and his value is finite. Finally we conclude that
	\[
	0 < \kappa_{d,\alpha} < + \infty.
	\]
\end{proof}

Now we are able to define a probability measure $\mu$ on $B(0,1)$ given by
\[
\mu(dy) := \kappa_{d,\alpha}^{-1} V_1(0,dy).
\]
Therefore, for any bounded measurable function $f$ we have
\begin{align}
	V_1(0,f(X_{\mathcal{I}(n-1)}+r_n\cdot)) &= \int_{B(0,1)} f(X_{\mathcal{I}(n-1)}+r_n y) V_1(0,dy) \notag\\
	&= \kappa_{d,\alpha} \int_{B(0,1)} f(X_{\mathcal{I}(n-1)}+r_n y) \mu(dy) \notag\\
	&= \kappa_{d,\alpha} \E^{(\mu)}\left[f(X_{\mathcal{I}(n-1)}+r_n \cdot)\right].
\end{align}
where $\E^{(\mu)}$ correspond to the expectation over the probability measure $\mu$ on $B(0,1)$. With this representation, we can rewrite the solution of \eqref{eq:1.1} as
\begin{equation}\label{eq:solfinal}
	u(x) = \E_x \left[g\left(X_{\mathcal{I}(N)}\right)\right] + \E_x \left[ \sum_{n=1}^{N} r_{n}^\alpha \kappa_{d,\alpha} \E^{(\mu)}\left[f\left(X_{\mathcal{I}(n-1)}+r_n \cdot\right)\right] \right].
\end{equation}

From the construction of $\kappa_{d,\alpha}$, the following properties are valid

\begin{lem}\label{lem:aux_f}
	One has
	\begin{enumerate}
		\item \[
		\E_x \left[ \sum_{n=1}^{N} r_n^{\alpha} \kappa_{d,\alpha}\right] = \E_x \left[ \sigma_D \right],
		\]
		\item \[
		\E_x \left[ \left|\sum_{n=1}^{N} r_n^{\alpha} \kappa_{d,\alpha} \right|^2\right] \leq \E_x \left[ \sigma_D^2 \right].
		\]
	\end{enumerate}
\end{lem}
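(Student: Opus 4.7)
For part (1), the idea is to decompose $\sigma_D$ via Lemma~\ref{lem:INsigmaD} as $\sigma_D=\mathcal{I}(N)=\sum_{n\ge 1}\tau_n\mathbf{1}_{\{n\le N\}}$ and to compute the conditional expectation of each summand. Let $\mathcal{F}_n$ denote the natural filtration of $(X_t)_{t\ge0}$ up to time $\mathcal{I}(n)$; by construction both $r_n$ and the event $\{n\le N\}=\{\rho_0,\ldots,\rho_{n-1}\in D\}$ are $\mathcal{F}_{n-1}$-measurable. The strong Markov property at time $\mathcal{I}(n-1)$ together with the scaling identity \eqref{eq:scaling1} imply that, conditionally on $\mathcal{F}_{n-1}$, $\tau_n$ is distributed like $r_n^\alpha\sigma_{B(0,1)}$ under $\PP_0$. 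Since $\E_0[\sigma_{B(0,1)}]=V_1(0,\mathbf{1}_{B(0,1)})=\kappa_{d,\alpha}$ by the very definition of $\kappa_{d,\alpha}$, this gives $\E_x[\tau_n\mathbf{1}_{\{n\le N\}}\mid\mathcal{F}_{n-1}]=\kappa_{d,\alpha}\,r_n^\alpha\mathbf{1}_{\{n\le N\}}$. Summing in $n$ by monotone convergence (integrability is guaranteed by the geometric tail bound in Theorem~\ref{teo:N}) and using the tower property then yields part~(1).

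For part~(2), set $S:=\sum_{n=1}^{N}\kappa_{d,\alpha}r_n^\alpha$. A conceptually clean route is to find a sub-$\sigma$-algebra $\mathcal{G}$ with $S=\E_x[\sigma_D\mid\mathcal{G}]$; then conditional Jensen immediately gives $\E_x[S^2]\le\E_x[\sigma_D^2]$. Equivalently, one may try to prove the intermediate bound $\E_x[S^2]\le\E_x[S\,\sigma_D]$ and close with Cauchy--Schwarz, since $\E_x[S\,\sigma_D]\le(\E_x[S^2]\E_x[\sigma_D^2])^{1/2}$. Expanding $S^2=\sum_{m,n}\tilde a_m\tilde a_n$ (with $\tilde a_n:=\kappa_{d,\alpha}r_n^\alpha\mathbf{1}_{\{n\le N\}}$ and similarly $\tilde\tau_n:=\tau_n\mathbf{1}_{\{n\le N\}}$), the tower property combined with part~(1) gives $\E_x[\tilde a_m\tilde a_n]=\E_x[\tilde a_m\tilde\tau_n]$ whenever $m\le n$, because $\tilde a_m$ is then $\mathcal{F}_{n-1}$-measurable. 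This handles ``half'' of the terms appearing in the difference $\E_x[S\,\sigma_D]-\E_x[S^2]$.

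The main obstacle is the remaining cross terms with $m>n$, where $\tilde a_m$ depends on WoS positions strictly after step $n$ and is therefore correlated with the martingale increment $\tilde\tau_n-\tilde a_n$ through the joint law of $(\sigma_{B(0,1)},X_{\sigma_{B(0,1)}})$ under $\PP_0$ (for the stable process the exit time is not independent of the exit position). To close the argument, I would introduce the martingale $V_n:=\phi(\rho_n)+\mathcal{I}(n)$, where $\phi(y):=\E_y[\sigma_D]$ (the martingale property follows from part~(1) and the strong Markov property, and $V_N=\sigma_D$ since $\phi\equiv0$ on $D^c$), and recast the residual cross terms by a discrete integration-by-parts (Abel summation) based on $V$. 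Combined with the identity $\E_x\bigl[\sum_{m>n}\tilde a_m\mid\mathcal{F}_n\bigr]=\phi(\rho_n)$ (itself a consequence of part~(1) applied after step $n$) and the orthogonality of martingale increments, this should deliver the required $L^2$ bound.
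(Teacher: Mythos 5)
Your part (1) is correct and follows essentially the same route as the paper, just derived from scratch: you identify $\kappa_{d,\alpha}=\E_0[\sigma_{B(0,1)}]$ from the definition of $V_1(0,dy)$ as the expected occupation measure, and then use the strong Markov property and scaling to get $\E_x[\tau_n\mathbf{1}_{\{n\le N\}}\mid\mathcal{F}_{n-1}]=\kappa_{d,\alpha}r_n^\alpha\mathbf{1}_{\{n\le N\}}$, summing by tower and monotone convergence. The paper instead reads the same identity off Lemma~\ref{lem:sol2} (the WoS representation) specialised to $f\equiv 1$, $g\equiv 0$, combined with \eqref{eq:V_prop}: $r_n^\alpha\kappa_{d,\alpha}=V_{r_n}(X_{\mathcal I(n-1)},1(\cdot))$. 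These are the same calculation packaged differently; your version has the advantage of not quoting the source representation theorem, and of making the filtration explicit.

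For part (2), you correctly identify the genuine subtlety --- for the isotropic $\alpha$-stable process, the exit time from a ball is \emph{not} independent of the exit position, so $S:=\sum_{n\le N}\kappa_{d,\alpha}r_n^\alpha$ is not the conditional expectation of $\sigma_D$ given the WoS path, and a plain conditional Jensen is unavailable --- but your proposed fix does not actually close. Carrying out your Abel summation with $\tilde a_n:=\kappa_{d,\alpha}r_n^\alpha\mathbf{1}_{\{n\le N\}}$, $\tilde\tau_n:=\tau_n\mathbf{1}_{\{n\le N\}}$, the terms with $m\le n$ in $\E_x[S\,\sigma_D]-\E_x[S^2]$ vanish by the tower property as you say, but the $m>n$ block collapses (using your identity $\E_x[\sum_{m>n}\tilde a_m\mid\mathcal F_n]=\phi(\rho_n)$) to $\sum_n\E_x\bigl[(\tilde\tau_n-\tilde a_n)\phi(\rho_n)\bigr]=\sum_n\E_x\bigl[\mathbf{1}_{\{n\le N\}}\operatorname{Cov}\bigl(\tau_n,\phi(\rho_n)\mid\mathcal F_{n-1}\bigr)\bigr]$; the same conditional covariance reappears if you instead compare $\E_x[V_N^2]$ and $\E_x[W_N^2]$ via orthogonality of increments for the two martingales $V_n=\phi(\rho_n)+\mathcal I(n)$ and $W_n=\phi(\rho_n)+\sum_{m\le n}\tilde a_m$. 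You give no argument that this covariance has the required sign, and it is exactly the dependence you flagged that makes its sign non-obvious, so the ``should deliver the required $L^2$ bound'' is a gap rather than a proof. The paper's own argument for part (2) is much shorter and avoids introducing $\phi$ or the martingale $V_n$ altogether: it rewrites $r_n^\alpha\kappa_{d,\alpha}=\E_{X_{\mathcal I(n-1)}}[\sigma_{B(X_{\mathcal I(n-1)},r_n)}]$, i.e.\ the conditional expectation of $\tau_n$ given the pre-step position, and invokes Jensen's inequality together with the tower property. You are right that this step is subtler than it looks because the conditioning $\sigma$-algebra varies with $n$; however, if you wanted to repair your argument rather than follow the paper's, the missing ingredient is precisely a sign (or size) estimate on $\operatorname{Cov}(\tau_n,\phi(\rho_n)\mid\mathcal F_{n-1})$, which you would need to extract from the joint law of the exit time and exit position of the stable process from a ball (cf.\ Theorem~\ref{teo:Blu} and the Ikeda--Watanabe description of the joint law).
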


\begin{proof}~{}
	
	\begin{enumerate}
		\item Notice from the definition of $V_1(0,f(\cdot))$, with $f\equiv 1$ that
		\[
		\kappa_{d,\alpha} = V_1\left(0,1\left(X_{\mathcal{I}(n-1)}+r_n \cdot\right)\right).
		\]
		It follows from \eqref{eq:V_prop} that
		\[
		r_n^{\alpha}\kappa_{d,\alpha} = V_{r_n} \left(X_{\mathcal{I}(n-1)},1(\cdot)\right).
		\]
		Moreover
		\[
		\begin{aligned}
			\E_x\left[\sum_{n=1}^{N} r_n^{\alpha} \kappa_{d,\alpha}\right] = &~{} \E_x\left[\sum_{n=1}^{N} V_{r_n}\left(X_{\mathcal{I}(n-1)},1(\cdot)\right)\right] \\
			=&~{}  \E_x \left[ \int_0^{\sigma_D} 1(X_s) ds\right] = \E_x \left[\sigma_D \right].
		\end{aligned}
		\]
		\item From the definition of $V_r(x,f(\cdot))$ with $f \equiv 1$, it follows that
		\[
		V_{r_n}\left(X_{\mathcal{I}(n-1)},1(\cdot)\right) = \E_{X_{\mathcal{I}(n-1)}} \left[\int_{0}^{\sigma_{B(X_{\mathcal{I}(n-1)},r_n)}} 1 (X_t) dt\right] = \E_{X_{\mathcal{I}(n-1)}} \left[\sigma_{B(X_{\mathcal{I}(n-1)},r_n)}\right].
		\]
		By definition of $\tau_n$, $n \in \N$, \eqref{eq:tau_n} and Markov property one has
		\[
		\E_{X_{\mathcal{I}(n-1)}}\left[\sigma_{B(X_{\mathcal{I}(n-1)},r_n)}\right] = \E_0 \left[\tau_{n}\right].
		\]
		Then, Jensen inequality implies
		\[
		\E_x \left[\left|\sum_{n=1}^{N} r_n^{\alpha} \kappa_{d,\alpha}\right|^2\right] = \E_x \left[\left|\sum_{n=1}^{N} \E_0 \left[\tau_n\right]\right|^2\right] \leq \E_x \left[ \E_0 \left[\left|\sum_{n=1}^{N} \tau_n\right|^2\right]\right].
		\]
		Finally, by tower property
		\[
		\E_x \left[\left|\sum_{n=1}^{N} r_n^{\alpha} \kappa_{d,\alpha}\right|^2\right] \leq \E_x \left[\mathcal{I}(N)^2\right] = \E_x\left[\sigma_D^2\right].
		\]
	\end{enumerate}
\end{proof}

\medskip

\section{Approximation of solutions of the Fractional Dirichlet problem using DNNs: the boundary data case}\label{Sect:6}

As usual, Problem \eqref{eq:1.1} can be decomposed in two subproblems, that will be treated in a separate way. We first deal with the homogeneous case.
 
\subsection{Homogeneous Fractional Laplacian}\label{Sect:6p1} We consider \eqref{eq:1.1} with $f \equiv 0$, namely,
\begin{equation} \label{eq:2.1}
	\left\{ \begin{array}{rll}
		(-\Delta)^{\alpha/2}u(x)&= 0 & \hbox{ for } x \in D,\\
		u(x)&= g(x) & \hbox{ for } x \in D^c.
	\end{array} \right.
\end{equation}
Note that under \eqref{Hg0}, one has from \eqref{hipotesis_imp_2} 
\begin{equation} \label{eq:2.2}
	u(x) = \E_x \left[ g(X_{\sigma_D}) \right], \hspace{.5cm} x \in D.
\end{equation}
The main idea of this section is approximate the solution \eqref{eq:2.2} by a deep neural network with ReLu activation, with an accurateness $\varepsilon>0$. For this we going to assume that $g$ can be approximated by a ReLu DNN satisfying several hypotheses. These hypotheses are expressed in the following assumption.

\medskip

Recall that $\vertiii{\cdot}$ represents the maximum number of hidden layers dimensions introduced in \eqref{norma}, $\mathcal R$ is the realization of a DNN as in \eqref{Realization}, and $\mathcal D$ was introduced in \eqref{P_D}.

\begin{ass}\label{Sup:g}
	Let $d\geq 2$. Let $g:D^c\to \R$ satisfying \eqref{Hg0}. Let $\delta_g \in (0,1)$,  $a,b \geq 1$, $p \in (1,\alpha)$ and $B >0$. Then there exists a {\it ReLu DNN} $\Phi_g \in$ \textbf{N} with
	\begin{enumerate}
	\item $\mathcal R(\Phi_g):D^c \to \R$  is continuous, and
	\item The following are satisfied:
	\begin{align}
		|g(y)-\left(\mathcal{R}(\Phi_g)\right)(y)| &\leq\delta_g Bd^p(1+|y|)^{p}, \hspace{.5cm} \forall y \in D^c. \tag{Hg-1} \label{H1}\\
		|\left(\mathcal{R}(\Phi_g)\right)(y)| &\leq Bd^p(1+|y|)^{p}, \hspace{1.0cm} \forall y \in D^c. \tag{Hg-2} \label{H2}\\
		\vertiii{\mathcal{D}(\Phi_g)} &\leq B d^b \delta_g^{-a}, \tag{Hg-3} \label{H3}
	\end{align}
	\end{enumerate}
\end{ass}

\begin{rem}
We use the hypotheses presented in \cite{Hutz} for the approximation of function defined over non bounded sets.
\end{rem}

In addition to the previous assumptions, we will require \emph{structural properties} related to the domain $D$ itself.
\begin{ass}\label{Sup:D}
Let $\alpha\in (1,2)$, $a,b \geq 1$ and $B>0$. Suppose that $D$ bounded domain enjoys the following structure.
	\begin{enumerate}
		\item For any $\delta_{\dist} \in (0,1)$, the function $x \mapsto \dist(x,\partial D)$ can be approximated by a ReLu DNN $\Phi_{\dist} \in \textbf{N}$ such that 
		\[
		\sup_{x \in D} \left|\dist(x,\partial D) - \left(\mathcal{R}(\Phi_{\dist})\right)(x)\right| \leq \delta_{\dist}, \tag{HD-1} \label{HD-1} 
		\]
		and
		\[
		\vertiii{\mathcal{D}(\Phi_{\dist})} \leq Bd^b\lceil \log(\delta_{\dist}^{-1}) \rceil^{a}. \tag{HD-2} \label{HD-2}
		\]
		\item For all $\delta_\alpha \in (0,1)$ there exists a ReLu DNN $\Phi_{\alpha} \in \textbf{N}$ such that
		\begin{equation}
			\sup_{|x|\leq\diam(D)}\left|\left(\mathcal{R}(\Phi_{\alpha})\right)(x) -x^{\alpha}\right| \leq \delta_{\alpha}. \tag{HD-3} \label{HD-3}
		\end{equation}
		and
		\[
		\vertiii{\mathcal{D}(\Phi_{\alpha})} \leq Bd^b \delta_{\alpha}^{-a}. \tag{HD-4} \label{HD-4}
		\]
		Moreover, $\mathcal{R}(\Phi_{\alpha})$ is a $L_{\alpha}$-Lipschitz function, $L_{\alpha}>0$, for $|x|\leq \diam(D)$.
	\end{enumerate}
\end{ass} 

\begin{rem}
	Notice that Assumption \eqref{HD-3} is assured by Hornik's Theorem \cite{Hornik}. Also, (HD-2) may seem too demanding because of the log term, but actually this is the situation in the case of a ball, see \cite{Grohs}.
\end{rem}

In the next proposition prove the existence of a ReLu DNN such that the Dirichlet problem without source is well approximated.
 
\begin{prop}\label{Prop:homo}
	Let $\alpha \in (1,2)$, $L_g >0$ and
	\begin{equation}\label{condiciones}
	\hbox{ $p,s \in (1,\alpha)$ such that $s < \frac{\alpha}{p}$ \quad and \quad $q \in \left[s,\frac{\alpha}{p}\right)$. }
	\end{equation}
	Suppose that the function $g$ satisfies \eqref{Hg0} and Assumptions \ref{Sup:g}. Suppose additionally that $D$ satisfies Assumptions \ref{Sup:D}.
	Then for all $\varepsilon \in (0,1)$ there exists a ReLu DNN $\Psi_{1,\varepsilon}$ that satisfies
	\begin{enumerate}
	\item Proximity in $L^q(D)$: 
	\begin{equation} \label{eq:2.3}
		\left(\int_{D} \left|\E_{x}\left[g\left(X_{\mathcal{I}(N)}\right)\right]-\left(\mathcal{R}(\Psi_{1,\varepsilon})\right)(x)\right|^q dx\right)^{\frac{1}{q}} \leq \varepsilon.
	\end{equation}
	\item Realization: $\mathcal{R}(\Psi_{1,\varepsilon})$ has the following structure: there exists $M \in \N$, $\overline{N}_{i} \in \N$, $Y_{i,n}$ i.i.d. copies of $X_{\sigma_{B(0,1)}}$ under $\PP_0$, for $i=1,...,M$, $n=1,...,\overline{N}_i$ such that for all $x \in D$,
	\begin{equation}
		\mathcal{R}(\Psi_{1,\varepsilon})(x) = \frac{1}{M} \sum_{i=1}^{M} \left(\mathcal{R}(\Phi_g) \circ \mathcal{R}(\Phi^{i}_{\overline{N}_i})\right)\left(x\right),
	\end{equation}
	where $\Phi^{i}_{\overline{N}_i}$ is a ReLu DNN approximating $X^i_{\mathcal{I}(\overline{N}_i)} = X^i_{\mathcal{I}(\overline{N}_i)}(x,Y_{i,1},...,Y_{i,\overline{N}_i})$. 
	\item Bounds: There exists $\widetilde{B}>0$ such that
	\begin{equation}
		\vertiii{\mathcal{D}(\Psi_{1,\varepsilon})} \leq \widetilde{B}|D|^{\frac{1}{q}\left(2a + ap+\frac{s}{s-1}(1+2a+ap)\right)} d^{b+2ap+2ap^2 + \frac{ps}{s-1}(1+2a + ap) }\varepsilon^{-a-\frac{s}{s-1}(1+2a+ap)}.
	\end{equation}
	\end{enumerate}
\end{prop}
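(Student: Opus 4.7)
The plan is to approximate $\E_x[g(X_{\mathcal{I}(N)})] = \E_x[g(\rho_N)]$ through a four-stage error decomposition: (i) truncate the random depth $N$ at a deterministic level $\overline{N}$; (ii) replace expectation by a Monte Carlo average of $M$ i.i.d.\ copies; (iii) replace the exact WoS iterates $\rho_n$ by DNN-computable iterates $\widetilde{\rho}_n$ built from $\Phi_{\dist}$, $\Phi_{\alpha}$, the multiplication DNN of Lemma \ref{lem:DNN_mult}, and i.i.d.\ samples $Y_{i,n}$ of $X_{\sigma_{B(0,1)}}$; (iv) replace $g$ by $\mathcal{R}(\Phi_g)$. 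The final DNN $\Psi_{1,\varepsilon}$ will have realization $\frac{1}{M}\sum_{i=1}^M \mathcal{R}(\Phi_g)(\widetilde{\rho}_{\overline{N}_i}^{(i)})$, which is assembled using Lemmas \ref{lem:DNN_comp}, \ref{lem:DNN_sum}, \ref{lem:DNN_para} and \ref{lem:DNN_mat}.

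First I would split the total error in $L^q(D)$ by the triangle inequality into: the truncation error $\|\E_x[g(\rho_N)\mathbf{1}_{N>\overline{N}}]\|_{L^q(D)}$; the Monte Carlo sampling error $\|\E_x[g(\rho_{N\wedge \overline{N}})] - \frac{1}{M}\sum_i g(\rho^{(i)}_{N_i\wedge \overline{N}_i})\|_{L^q(D,\PP)}$; the WoS-approximation error arising when each $\rho^{(i)}_n$ is replaced by $\widetilde{\rho}^{(i)}_n$; and the error from $|g-\mathcal{R}(\Phi_g)|$. Truncation is controlled by Theorem \ref{teo:N}: since $\PP_x(N>\overline{N})\le (1-\widetilde{p})^{\overline{N}}$, combined with \eqref{H2}, Corollary \ref{cor:Kab} applied iteratively (using $p<\alpha$) gives a bound of order $d^{p}(1-\widetilde p)^{\overline N /s'}$ for suitable $s'$, so $\overline{N}$ of order $\log(\varepsilon^{-1})$ suffices. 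The Monte Carlo piece is handled by Corollary \ref{cor:MCq} with exponent $s\in(1,\alpha)$: since $\E_x[|g(\rho_N)|^s]<\infty$ by \eqref{H2} and iterated use of Corollary \ref{cor:Kab} (here the condition $s<\alpha/p$ in \eqref{condiciones} is essential to make $\E_x[(1+|\rho_N|)^{ps}]$ finite), we obtain the rate $M^{-(1-1/s)}$, so $M \sim \varepsilon^{-s/(s-1)}$ suffices.

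Next I would construct the DNN WoS iterate recursively: given $\widetilde{\rho}_{n-1}$, compute $\widetilde{r}_n := \mathcal{R}(\Phi_{\dist})(\widetilde{\rho}_{n-1})$, form each coordinate $(\widetilde{\rho}_n)_k := (\widetilde{\rho}_{n-1})_k + \mathcal{R}(\Upsilon)(\widetilde{r}_n,(Y_{i,n})_k)$ using the multiplication DNN from Lemma \ref{lem:DNN_mult} (with $\kappa$ depending on $\diam(D)$ and the truncation of $|Y_{i,n}|$ to be discussed). A one-step perturbation analysis, combined with the $L_g$-Lipschitz hypothesis on $g$ (used only after applying $\Phi_g$ on the outside) and the Lipschitz behavior of $\dist(\cdot,\partial D)$, yields that $|\rho_n - \widetilde{\rho}_n|$ grows at most geometrically in $n$; choosing $\delta_{\dist}$ and the multiplication tolerance $\delta$ of order $\varepsilon / (\overline{N}\cdot\text{const})$ makes this error $\le \varepsilon$ after $\overline{N}$ steps. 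Because $|X_{\sigma_{B(0,1)}}|$ is unbounded, I would further truncate the copies $Y_{i,n}$ on the event $|Y_{i,n}|\le R$ with $R$ polynomial in $\varepsilon^{-1}$, using Corollary \ref{cor:Kab} with exponent $p$ to pay for the truncation tail in $L^q$; this sets $\kappa$ in Lemma \ref{lem:DNN_mult} to a power of $R$ and $\diam(D)$.

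Finally, the assembly proceeds by applying Lemma \ref{lem:DNN_para} to stack the $d$ coordinates of each WoS step, Lemma \ref{lem:DNN_comp} to compose $\overline{N}$ steps, composing once more with $\Phi_g$, and Lemma \ref{lem:DNN_sum} to average $M$ parallel copies. Using Remark \ref{rem:CoD} and the bounds in \eqref{H3}, \eqref{HD-2}, \eqref{HD-4} together with the log-factor bound of Lemma \ref{lem:DNN_mult}, the dominant contribution to $\vertiii{\mathcal{D}(\Psi_{1,\varepsilon})}$ is $M \cdot \overline{N} \cdot \max\{\vertiii{\mathcal{D}(\Phi_g)},\vertiii{\mathcal{D}(\Phi_{\dist})},\vertiii{\mathcal{D}(\Upsilon)},d\}$. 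With the choices $M\sim \varepsilon^{-s/(s-1)}$, $\overline{N}\sim\log(\varepsilon^{-1})$ and $\delta_g,\delta_{\dist},\delta\sim \varepsilon^{1+2a+ap}$ (the power chosen to absorb the growth of all Lipschitz constants raised over $\overline{N}$ iterations and the factor $(1+|\rho_N|)^p$), one recovers exactly the stated polynomial bound in $d$, $|D|$, and $\varepsilon^{-1}$. The main obstacle is this last bookkeeping step: because each WoS step perturbation is amplified by products of $L_{\alpha}$, $L_{\dist}$ and $L_g$ over $\overline{N}$ iterations, the tolerances $\delta_{\cdot}$ must be chosen as inverse polynomials in $\varepsilon^{-1}$ whose exponents are tight enough that $\log(\delta^{-1})$ stays logarithmic; the interplay between the non-existence of second moments of $X$ (forcing the use of $L^q$ with $q<\alpha/p$ and hence $s\in(1,\alpha/p)$) and the geometric tail of $N$ is the delicate point, and is what fixes the precise exponents appearing in the statement.
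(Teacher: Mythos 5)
Your proposal differs from the paper in several substantial ways, and a couple of these differences are not merely stylistic but contain genuine errors that would prevent the argument from closing.

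\textbf{Unnecessary use of $\Upsilon$ and $\Phi_\alpha$.} You propose building the DNN iterate via the multiplication network $\mathcal{R}(\Upsilon)(\widetilde{r}_n,(Y_{i,n})_k)$ and you list $\Phi_\alpha$ among the building blocks. Neither is needed in the homogeneous case. The samples $Y_{i,n}$ are fixed realizations of $X_{\sigma_{B(0,1)}}$, i.e.\ concrete constant vectors; multiplying by $(Y_{i,n})_k$ is an affine operation, handled exactly by Lemma~\ref{lem:DNN_mat} with no tolerance to manage. This is precisely what the paper does with $\mathcal{R}(\Phi_{i,n})(x) = \mathcal{R}(\Phi_{i,n-1})(x) + Y_{i,n}\,\mathcal{R}(\Phi_{\dist})(\mathcal{R}(\Phi_{i,n-1})(x))$. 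Introducing $\Upsilon$ creates an artificial need to bound $|Y_{i,n}|$ (hence your truncation at $R$), adds a per-step approximation error $\delta_\Upsilon$, and inflates the width. The function $(\cdot)^\alpha$ and $\Phi_\alpha$ only enter in the source term (Proposition~\ref{Prop:6p1}); they have no role in Proposition~\ref{Prop:homo}.

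\textbf{Truncation of $N$ versus good realizations.} You truncate $N$ deterministically at $\overline N\sim\log(\varepsilon^{-1})$ using the geometric tail of Theorem~\ref{teo:N}. The paper does something structurally different: it keeps $N_i$ random, uses Corollary~\ref{cor:MCq} to control, simultaneously, the Monte Carlo errors for $g(X_{\mathcal I(N)})$, for $N$, and for $\sum_n|Y_n|$ in a \emph{coupled} expectation over $\Omega\times D$, and then invokes the elementary fact that a nonnegative random variable with finite expectation admits a realization bounded by that expectation (Lemma~\ref{lem:sacada}) to extract specific finite $\overline N_i$ and $Y_{i,n}$. As a result $\sum_i\overline N_i\lesssim M\sim\varepsilon^{-s/(s-1)}$, not $M\cdot\log(\varepsilon^{-1})$. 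Your deterministic-truncation route could in principle be made to work, but it would require a separate estimate of $\E_x\bigl[\,|g(\rho_N)|\,\mathbf 1_{N>\overline N}\bigr]$, which interacts with the unbounded moments of $\rho_N$ and is not automatic.

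\textbf{The error in the choice of $\delta_{\dist}$.} This is the critical gap. You claim $\delta_{\dist}\sim\varepsilon^{1+2a+ap}$ (i.e.\ an inverse polynomial in $\varepsilon^{-1}$) and that ``$\log(\delta^{-1})$ stays logarithmic.'' Neither is correct. The one-step perturbation of the WoS iterate is amplified by a factor of order $1+\sum_n|Y_{i,n}|$ per step, so after $\overline N_i$ steps the error is multiplied by roughly $\bigl(1+\sum_n|Y_{i,n}|\bigr)^{\overline N_i}$, and both $\sum_{i,n}|Y_{i,n}|$ and $\sum_i\overline N_i$ are of size $\sim M\sim\varepsilon^{-s/(s-1)}$. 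The paper accordingly sets
\[
\delta_{\dist} = \frac{\varepsilon}{5|D|^{1/q}L_g}\,\bigl(1+M^{1/s}C_1+M(C_2+C_3)\bigr)^{-(M^{1/s}C_1+M(C_2+C_4))},
\]
which is exponentially small in a polynomial of $\varepsilon^{-1}$. The whole mechanism that rescues polynomial complexity is Assumption~\ref{Sup:D}~(HD-2): the width of $\Phi_{\dist}$ scales only like $\lceil\log(\delta_{\dist}^{-1})\rceil^a$, and $\log(\delta_{\dist}^{-1})$ is then \emph{polynomial} in $\varepsilon^{-1}$ (not logarithmic). A polynomial choice of $\delta_{\dist}$ would simply fail to compensate the geometric amplification, and a logarithmic $\log(\delta_{\dist}^{-1})$ is mathematically incompatible with that amplification. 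Since this is exactly the point at which the exponents in the final bound are determined, the stated exponents cannot be recovered from your parametrization; the claim that you ``recover exactly the stated polynomial bound'' is unsupported.

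\textbf{Minor imprecision.} You describe the dominant contribution to $\vertiii{\mathcal D(\Psi_{1,\varepsilon})}$ as $M\cdot\overline N\cdot\max\{\cdots\}$. Composition via Lemma~\ref{lem:DNN_comp} does not multiply widths; the width after composing $\overline N$ copies is a max, not a product. The $\overline N$-dependence affects depth, not $\vertiii{\cdot}$, and the paper's bound \eqref{eq:normpsi1eps} reflects this: $\vertiii{\mathcal D(\Psi_{1,\varepsilon})}\le M(\vertiii{\mathcal D(\Phi_g)}+2d+\vertiii{\mathcal D(\Phi_{\dist})})$, linear in $M$ but with no $\overline N$ factor.

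In short, the overall architecture (split the error into a Monte Carlo part, a WoS-to-DNN part, and a $g$-to-$\Phi_g$ part, then assemble via composition/sum lemmas and track widths) matches the paper, but your concrete realization uses $\Upsilon$ where it is not needed, truncates $N$ and $Y$ where the paper does not, and — most importantly — chooses $\delta_{\dist}$ on a scale that cannot control the geometric amplification that actually drives the analysis.
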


%

\begin{rem}
The hypotheses \eqref{condiciones} are non empty if $\alpha\in (1,2)$. This requirement is standard in the literature devoted to the Fractional Laplacian, where some proofs are highly dependent on the cases $\alpha\in (0,1]$ versus $\alpha\in (1,2).$ 
\end{rem}
\begin{rem}
	The condition $\alpha \in (1,2)$ is very important. In particular, if $\alpha \in (0,1)$ the hyportheses \eqref{condiciones} are empty, and moreover, processes that we are working with not necessarily have finite expectation and there are not guarantee on the convergence of the ReLu DNNs. 
\end{rem}

\subsection{Proof of Proposition \ref{Prop:homo}: existence} The proof will be divided in several steps. As explained before, we follow the ideas in \cite{Grohs}, with several changes due to the nonlocal character of the treated equation.

\medskip 

Let $s,p$ and $q$ be as in \eqref{condiciones}.

\medskip

\noindent
{\bf Step 1. Preliminaries.} Let $(\rho_n)_{n \in \N}$ be a WoS process introduced in Definition \ref{defn:WoS} starting from $x \in D$. Let also $\mathcal I(N)$ and $N$ be defined as in \eqref{eq:I_n} and \eqref{eq:N}. Recall that from \eqref{hipotesis_imp_2},
\begin{equation}\label{igualdades}
	u(x) = \E_x[g(\rho_{N})] = \E_x[g(X_{\mathcal{I}(N)})] .
\end{equation}
From the construction of $X_{\mathcal{I}(N)}$, one has that $X_{\mathcal{I}(N)} \in D^c$ and it depends on $N$ i.i.d. copies of $X_{\sigma_{B(0,1)}}$, namely
\begin{equation}\label{eq:X_IN}
X_{\mathcal{I}(N)} = X_{\mathcal{I}(N)}(x,Y_1,...,Y_N),
\end{equation}
where each $Y_n$, $n=1,\ldots, N$, is an independent copy of $X_{\sigma_{B(0,1)}}$ under $\PP_0$.

\medskip

Let $M \in \N$. Consider $M$ copies of $X_{\mathcal{I}(N)}$ starting from $x \in D$, as described in \eqref{eq:X_IN}. We denote such copies as
\begin{equation}\label{eq:Y_n}
X^{i}_{\mathcal{I}(N_i)} = X^{i}_{\mathcal{I}(N_i)}(x,Y_{i,1},...,Y_{i,N_i}), 
\end{equation}
with $Y_{i,n}$, $i=1,...,M$, $n = 1,...,N_i$ i.i.d. copies of $X_{\sigma_{B(0,1)}}$ under $\PP_0$, and where each $N_i$ is an i.i.d. copy of $N$. Notice that for each copy, $N_i$ can be different (as a random variable).

\medskip

With this in mind, and following \cite{Grohs}, we introduce the \emph{Monte Carlo operator}
\begin{equation}\label{eq:E_M}
	E_{M}(x) := \frac{1}{M} \sum_{i=1}^{M} \left(\mathcal{R}(\Phi_g)\right)\left(X^{i}_{\mathcal{I}(N_i)}\right),
\end{equation}
where $\mathcal{R}(\Phi_g)$ denotes the realization as a continuous function of the DNN $\Phi_g \in \textbf{N}$ that approximates $g$ in Assumption \ref{Sup:g}. Notice that $E_{M}(x)$ may not be a DNN in the general case.

\medskip

Our main objective in the following steps is to obtain suitable bounds on the difference between the expectation of $g(X_{\mathcal I(N)})$ and $E_{M}(x)$, in a certain sense to be determined. Step 2 controls the difference between $\E_x\left[g\left(X_{\mathcal{I}(N)}\right)\right]$ and the intermediate term $\E_x\left[\left(\mathcal{R}(\Phi_g)\right)\left(X_{\mathcal{I}(N)}\right)\right]$. Notice that this last term is not necessarily a DNN, because of the quantity $X_{\mathcal{I}(N)}$.

\medskip

\noindent
{\bf Step 2.} Define
\[
J_1 := \left|\E_x\left[g\left(X_{\mathcal{I}(N)}\right)\right]-\E_x\left[\left(\mathcal{R}(\Phi_g)\right)\left(X_{\mathcal{I}(N)}\right)\right]\right|.
\]
Notice that by Jensen inequality and hypothesis \eqref{H1}  one has
\[
\begin{aligned}
J_1 \leq \E_x \left[\left|g\left(X_{\mathcal{I}(N)}\right)-\left(\mathcal{R}(\Phi_g)\right)\left(X_{\mathcal{I}(N)}\right)\right|\right]  \leq Bd^{p}\delta_g \E_x \left[\left(1+\left|X_{\mathcal{I}(N)}\right|\right)^{p}\right].
\end{aligned}
\]
Recall that we have an expression for $\E_0\left[\left|X_{\sigma_{B(0,1)}}\right|^{\beta}\right]$ with $\beta<\alpha$ from Corollary \ref{cor:Kab}. The idea is to find a bound for $\E_x\left[\left(1+\left|X_{\mathcal{I}(N)}\right|\right)^p\right]$ in terms of \eqref{eq:Kab}.

\medskip

Let $R>1$ be large enough to have $D \subset B^*:=B(x,R)$. The right hand side of the previous inequality is going to be separated in two terms: the case where $\sigma_D = \sigma_{B^{*}}$, and otherwise. Notice that $\sigma_D> \sigma_{B^*}$ is not possible. We obtain:
\[
\begin{aligned}
	J_1&\leq Bd^{p} \delta_g \Big( \E_x \left[\left( 1 + \left|X_{\mathcal{I}(N)}\right| \right)^{p} {\bf 1}_{\{ \sigma_D = \sigma_{B^*} \} }\right] + \E_x \left[\left( 1 + \left|X_{\mathcal{I}(N)}\right| \right)^{p} {\bf 1}_{\{\sigma_D < \sigma_{B^*} \}}\right] \Big).
\end{aligned}
\]
In the case of the equality, the processes $X_{\mathcal{I}(N)}$ and $X_{\sigma_{B^{*}}}$ are equal on law under $\PP_x$ from Lemma \ref{lem:INsigmaD} and Remark \ref{rem:igualdades}. Then the Markov property and the scaling property of the process (see \eqref{eq:scaling1} and \eqref{eq:scaling2}) can be used to get
\[
\E_x \left[\left( 1 + \left|X_{\mathcal{I}(N)}\right| \right)^{p} {\bf 1}_{\{ \sigma_D = \sigma_{B^*} \} }\right] =  \E_0 \left[\left(1+ \left|x + RX_{\sigma_{B(0,1)}}\right|\right)^{p}\right] .
\]
On the other hand note that if $\sigma_D < \sigma_{B^{*}}$ then $X_{\mathcal{I}(N)} \in B^{*} \setminus D$. Therefore
\[
\E_x \left[\left( 1 + \left|X_{\mathcal{I}(N)}\right| \right)^{p} {\bf 1}_{\{\sigma_D < \sigma_{B^*} \}}\right] \leq \sup_{y \in B^{*} \setminus D} (1 + |y|)^{p}.
\]
We conclude
\[
J_1\leq B d^{p} \delta_g \Bigg( \E_0 \left[\left(1+ \left|x + RX_{\sigma_{B(0,1)}}\right|\right)^{p}\right] + \sup_{y \in B^{*} \setminus D} (1 + |y|)^{p} \Bigg).
\]
Using the Minkowski inequality and the fact that the sets $D$ and $B^{*}\setminus D$ are bounded, one has
\[
\begin{aligned}
	J_1 &\leq B d^{p} \delta_g \left( \left(1+ |x| + R\E_0\left[\left|X_{\sigma_{B(0,1)}}\right|^{p}\right]^{\frac{1}{p}}\right)^{p} + \sup_{y \in B^{*} \setminus D} (1 + |y|)^{p} \right)\\
	&\leq B d^{p} \delta_g \left( \left(K_1 + R \E_0 \left[\left|X_{\sigma_{B(0,1)}}\right|^{p}\right]^{\frac{1}{p}}\right)^{p} + K_2^{p} \right),
\end{aligned}
\]
where $K_1$ and $K_2$ are constants such that for all $x \in D$, $y \in B^{*} \setminus D$
\begin{equation}\label{eq:K1K2}
1 + |x| \leq K_1 \quad \hbox{ and } \quad 1 + |y| \leq K_2.
\end{equation}
By Corollaries \ref{cor:Xmoment} and \ref{cor:Kab} one has
\[
\E_0 \left[\left|X_{\sigma_{B(0,1)}}\right|^{p}\right]^{\frac{1}{p}} = K(\alpha,p)^{\frac{1}{p}} < \infty \quad \Longleftrightarrow \quad p < \alpha. 
\]
Therefore, from the choice of $p$, one has that $J_1$ is finite and bounded as follows:
\begin{equation}\label{eq:cotaJ1}
J_1 \leq B d^{p} \delta_g \left(\left(K_1 + R K(\alpha,p)^{\frac{1}{p}} \right)^{p} + K_2^{p} \right),
\end{equation}
with $K(\alpha,p)<+\infty$ defined in \eqref{eq:Kab}.

\medskip

\noindent
{\bf Step 3.} In this step we control the difference between the intermediate term $\E_x\left[\left(\mathcal{R}(\Phi_g)\right)\left(X_{\mathcal{I}(N)}\right)\right]$ previously introduced in Step 2, and the Monte Carlo $E_M(x)$ \eqref{eq:E_M}. Define
\[
J_2 := \norm{\E_x\left[\left(\mathcal{R}(\Phi_g)\right)\left(X_{\mathcal{I}(N)}\right)\right]-E_M(x)}_{L^q(\Omega,\PP_x)}.
\] 
In order to bound this term, we are going to use Corollary \ref{cor:MCq}. First of all notice from \eqref{H2} that	
\[
\E_x\left[|\left(\mathcal{R}(\Phi_g)\right)(X_{\mathcal{I}(N)})|\right] < Bd^p \E_x\left[\left(1+\left|X_{\mathcal{I}(N)}\right|\right)^p\right].
\]
Note by Step 2 that
\[
\E_x \left[\left(1 + \left|X_{\mathcal{I}(N)}\right|\right)^p\right] \leq \left(K_1 + R K(\alpha,p)^{\frac{1}{p}}\right)^p + K_2^{p} < +\infty,
\]
where $K_1$ and $K_2$ are defined in \eqref{eq:K1K2}. Therefore one can conclude that
\[
\E_x\left[\left|\left(\mathcal{R}(\Phi_g)\right)(X_{\mathcal{I}(N)})\right|\right]<\infty.
\]
Then for all $i \in \{1,...,M\}$, $\left(\mathcal{R}(\Phi_g)\right)(X^{i}_{\mathcal{I}(N_i)}) \in L^1(\Omega,\PP_x)$. For $s$ as in \eqref{condiciones}, Corollary \ref{cor:MCq} ensures that for all $q \in [s,\infty)$ (and in particular for all $q$ as in \eqref{condiciones}), one has
\begin{equation}\label{cota1_J2}
	J_2 \leq \frac{\Theta_{q,s}}{M^{1-\frac{1}{s}}}  \norm{\E_x\left[\left(\mathcal{R}(\Phi_g)\right)\left(X_{\mathcal{I}(N)}\right)\right] -  \left(\mathcal{R}(\Phi_g)\right)\left(X_{\mathcal{I}(N)}\right)}_{L^{q}(\Omega,\PP_x)}.
\end{equation}
Now we bound the norm on the right hand side of \eqref{cota1_J2}. By Minkowski's inequality one has
\[
\begin{aligned}
	& \norm{\E_x\left[\left(\mathcal{R}(\Phi_g)\right)\left(X_{\mathcal{I}(N)}\right)\right] -  \left(\mathcal{R}(\Phi_g)\right)\left(X_{\mathcal{I}(N)}\right)}_{L^{q}(\Omega,\PP_x)} \\
	&\qquad\leq \norm{\E_x\left[\left(\mathcal{R}(\Phi_g)\right)\left(X_{\mathcal{I}(N)}\right)\right]}_{L^q(\Omega,\PP_x)}+\norm{\left(\mathcal{R}(\Phi_g)\right)\left(X_{\mathcal{I}(N)}\right)}_{L^q(\Omega,\PP_x)}\\
	&\qquad\leq 2\E_x\left[\left|\left(\mathcal{R}(\Phi_g)\right)\left(X_{\mathcal{I}(N)}\right)\right|^q\right]^{\frac{1}{q}}.
\end{aligned}
\]
Now using hypothesis \eqref{H2} and the same results in previous Step to obtain
\[
\begin{aligned}
	& \norm{\E_x\left[\left(\mathcal{R}(\Phi_g)\right)\left(X_{\mathcal{I}(N)}\right)\right] -  \left(\mathcal{R}(\Phi_g)\right)\left(X_{\mathcal{I}(N)}\right)}_{L^{q}(\Omega,\PP_x)} \\	
	&\qquad\leq 2Bd^{p} \E_x \left[\left(1+\left|X_{\mathcal{I}(N)}\right|\right)^{pq}\right]^{\frac{1}{q}}\\
	&\qquad\leq 2Bd^{p} \left(\E_x \left[\left(1+\left|X_{\mathcal{I}(N)}\right|\right)^{pq}{\bf 1}_{\{\sigma_D=\sigma_{B^*} \} }\right]^\frac{1}{q} + \E_x \left[\left(1+\left|X_{\mathcal{I}(N)}\right|\right)^{pq}{\bf 1}_{\{\sigma_D<\sigma_{B^*} \}}\right]^{\frac{1}{q}}\right),
\end{aligned}
\]
where we recall that $B^{*}$ is a ball in $\R^d$ centered in $x$ with radious $R>1$ large enough such that $D \subset B^{*}$. Then using the scaling property of $X$ and Minkowski inequality, we have
\begin{equation}
\begin{aligned}\label{cota2_J2}
	& \norm{\E_x\left[\left(\mathcal{R}(\Phi_g)\right)\left(X_{\mathcal{I}(N)}\right)\right] -  \left(\mathcal{R}(\Phi_g)\right)\left(X_{\mathcal{I}(N)}\right)}_{L^{q}(\Omega,\PP_x)} \\
	&\hspace{1.6cm}\leq 2Bd^{p} \left( \E_0 \left[ \left(1 + \left|x+RX_{\sigma_{B(0,1)}}\right|\right)^{pq}\right]^{\frac{1}{q}} + \sup_{y \in B^{*}\setminus D} (1 + |y|)^{pq} \right)\\
	&\hspace{1.6cm}\leq 2Bd^{p} \left(  \left(K_1 + R \E_0\left[ \left|X_{\sigma_{B(0,1)}}\right|^{pq} \right]^{\frac{1}{pq}}\right)^{p} + K_2^{p}\right).
\end{aligned}
\end{equation}
Therefore, by \eqref{cota1_J2}, \eqref{cota2_J2} and Corollary \ref{cor:Kab} we have that $J_2$ is finite and bounded as follows:
\begin{equation}\label{eq:cotaJ2}
	J_2  \leq \frac{2\Theta_{q,s}}{M^{1-\frac{1}{s}}}Bd^{p} \left(  \left( K_1 + R K(\alpha,pq)^{\frac{1}{pq}}\right)^{p} +  K_2^{p}\right).
\end{equation}

\medskip

\noindent
{\bf Step 4.} Thanks to Steps 2 and 3 now it is possible to bound the difference
\[
\norm{\E_x \left[g\left(X_{\mathcal{I}(N)}\right)\right] - E_M(x)}_{L^q(\Omega,\PP_x)}.
\]
Indeed, first notice from Jensen inequality in \eqref{eq:Kab} that for $1<q<\frac{\alpha}p$ (see \eqref{condiciones}),
\[
K(\alpha,p)^{\frac{1}{p}} \leq K(\alpha,pq)^{\frac{1}{pq}}<+\infty .
\]
Condition $q < \frac{\alpha}{p}$ is necessary in order to have $K(\alpha,pq)$ finite (see Corollary \eqref{cor:Kab}). It follows from \eqref{eq:cotaJ1}, \eqref{eq:cotaJ2} and Minkowski's inequality that
\begin{equation}\label{eq:cotaP4_1}
\begin{aligned}
	&\norm{\E_x \left[g\left(X_{\mathcal{I}(N)}\right)\right] - E_M(x)}_{L^q(\Omega,\PP_x)} \leq J_1 + J_2\\
	&\leq \left(\delta_g + \frac{2\Theta_{q,s}}{M^{1- \frac{1}{s}}}\right)Bd^p\left(  \left(K_1 + R K(\alpha,pq)^{\frac{1}{pq}}\right)^{p} + K_2^{p}\right).
\end{aligned}
\end{equation}
Define
\begin{equation}
	C := \left(\left(K_1 +RK(\alpha,pq)^{\frac{1}{pq}}\right)^{p} + K_2^p\right) < \infty.
\end{equation}
Note that the choice of $R$ depends on the starting point $x$ in order to have $D \subset B(x,R)$. If we choose e.g. $R = 2\diam(D)$, it follows that for all $x \in D$, $D \subset B(x,2\diam(D))$ and then $C$ is uniform w.r.t. $x \in D$. Fubini and \eqref{eq:cotaP4_1} implies that
\begin{equation}\label{eq:cotaP4_2}
	\E_x \left[ \int_{D}\left| \E_x \left[g\left(X_{\mathcal{I}(N)}\right)\right] - E_M(x) \right|^q dx \right] \leq \left(\delta_g + \frac{2\Theta_{q,s}}{M^{1- \frac{1}{s}}}\right)^{q}|D|B^qd^{pq}C^q.
\end{equation}

In the following steps we are going to control two quantities that help us to obtain bounds for the random variables $N_i$ and $|Y_{i,n}|$, for all $i=1,...,M$, $n=1,...,N_i$. Although similar to the steps followed in \cite{Grohs}, here we need additional estimates because of the non continuous nature of the L\'evy jump processes.

\medskip

\noindent
{\bf Step 5.}
In order to bound the following expectation
\[\E_x \left[\left|\E_x[N] - \frac{1}{M} \sum_{i=1}^{M} N_i\right|^q\right],
\]
we are going to use Corollary \ref{cor:MCq}. Notice by Theorem \ref{teo:N} that for all $x \in D$ there exists a geometric random variable $\Gamma$ with parameter $\widetilde{p} = \widetilde{p}(\alpha,d) > 0$ such that
\[
\E_x\left[|N|\right] \leq \E_x\left[\Gamma\right] = \frac{1}{\widetilde{p}} < \infty,
\]
and then for all $i \in \{1,...,M\}$, $N_i \in L^1(\Omega,\PP_x)$. For $s$ as in \eqref{condiciones}, Corollary \ref{cor:MCq} implies for all $q$ as in \eqref{condiciones} that
\begin{equation}\label{eq:cotaP5_1}
	 \E_x\left[\left|\E_x[N]-\frac{1}{M} \sum_{i=1}^{M} N_i\right|^q\right]\leq \left( \frac{2\Theta_{q,s}}{M^{1-\frac{1}{s}}}\right)^q \E_x \left[|N|^{q}\right] \leq \left( \frac{2\Theta_{q,s}}{M^{1-\frac{1}{s}}}\right)^q \E_x \left[|N|^{2}\right],
\end{equation}
where we used that $q<2$ and then $\E_x\left[|\cdot|^q\right]\leq \E_x\left[|\cdot|^2\right]$. Recall that
\begin{equation}\label{eq:cotaN^2}
\E_x\left[|N|^2\right] \leq \E_x\left[\Gamma^2\right] = \frac{2-\widetilde{p}}{\widetilde{p}^2} < \infty,
\end{equation}
and therefore, it holds from \eqref{eq:cotaP5_1} and \eqref{eq:cotaN^2} that
\begin{equation}\label{eq:cotaP5_2}
	 \E_x\left[\left|\E_x[N]-\frac{1}{M} \sum_{i=1}^{M} N_i\right|^q\right] \leq \left( \frac{2\Theta_{q,s}}{M^{1-\frac{1}{s}}}\right)^q \frac{2-\widetilde{p}}{\widetilde{p}^{2}}.
\end{equation}
\medskip
\noindent
{\bf Step 6.} Finally, we want to estimate
\[
\E_x \left[\left|\E_x \left[\sum_{n=1}^{N} |Y_{n}|\right] - \frac{1}{M} \sum_{i=1}^{M} \sum_{n=1}^{N_i} |Y_{i,n}| \right|^{q} \right],
\]
where $Y_{i,n}$ were introduced in \eqref{eq:Y_n}. As in the previous step, we use the Corollary \ref{cor:MCq}. First of all, it follows from the independence of $\left(Y_{n}\right)_{n=1}^{k}$ and $N$ for fixed $k \in \N$ ($Y_{n}$ and $X$ are independent), and the law of total expectation that
\[
\begin{aligned}
	\E_x \left[\left|\sum_{n=1}^{N} |Y_{n}|\right|\right] &= \sum_{k \geq 1} \E_x \left[\left.\left|\sum_{n=1}^{N}|Y_{n}|\right| ~ \right| ~ N=k\right] \PP_x (N = k)\\
	&= \sum_{k \geq 1} \E_0 \left[\left|\sum_{n=1}^{k}|Y_{n}|\right|\right] \PP_x (N = k).
\end{aligned}
\]
Recall that $(Y_{n})_{n=1}^{k}$ are i.i.d. with the same distribution as $X_{\sigma_{B(0,1)}}$. Triangle inequality ensures that
\[
\begin{aligned}
	\E_x \left[\left|\sum_{n=1}^{N} |Y_{n}|\right|\right] &\leq \sum_{k\geq1} \sum_{n=1}^{k} \E_{0}\left[|Y_n|\right] \PP_x(N=k)\\
	&= \E_0\left[\left|X_{\sigma_{B(0,1)}}\right|\right] \sum_{k \geq 1} k \PP_x(N=k)\\
	&= K(\alpha,1) \E_x[N].
\end{aligned}
\]
Then for all $i \in \{1,...,M\}$, $\sum_{n=1}^{N_i} |Y_{i,n}| \in L^{1}(\Omega,\PP_x)$. Moreover, with similar arguments 
\[
\begin{aligned}
	\E_x \left[\left|\sum_{n=1}^{N} |Y_{n}|\right|^q\right] &= \sum_{k \geq 1} \E_x \left[\left.\left|\sum_{n=1}^{N}|Y_{n}|\right|^q \right| ~ N=k\right] \PP_x (N = k)\\
	&= \sum_{k \geq 1} \E_0 \left[\left|\sum_{n=1}^{k}|Y_{n}|\right|^q\right] \PP_x (N = k).
\end{aligned}
\]
Recall from the bounds of $q$ that appear in \eqref{condiciones}, one has that $q \in (1,2)$ and the function $|\cdot|^q$ is convex. This implies that, for all $k \in \N$
\[
\left|\sum_{n=1}^{k} \frac{|Y_n|}{k}\right|^q \leq \sum_{n=1}^{k} \frac{|Y_n|^q}{k}.
\] 
Therefore
\[
\left|\sum_{n=1}^{k}|Y_n|\right|^q \leq k^{q-1} \sum_{n=1}^{k} |Y_n|^{q}.
\]
Replacing this on the previous estimate one has
\begin{equation}\label{eq:cotaP6_1}
\begin{aligned}
	\E_x \left[\left|\sum_{n=1}^{N} |Y_{n}|\right|^q\right] &\leq \sum_{k\geq1} \sum_{n=1}^{k} k^{q-1} \E_{0}\left[|Y_n|^q\right] \PP_x(N=k)\\
	&= \E_0\left[\left|X_{\sigma_{B(0,1)}}\right|^q\right] \sum_{k \geq 1} k^q \PP_x(N=k)\\
	&= K(\alpha,q) \E_x[N^q].
\end{aligned}
\end{equation}
For $s$ as in \eqref{condiciones}, Corollary \ref{cor:MCq} implies for all $q$ as in \eqref{condiciones} that
\[
\E_x \left[\left|\E_x \left[\sum_{n=1}^{N} |Y_{n}|\right] - \frac{1}{M} \sum_{i=1}^{M} \sum_{n=1}^{N_i} |Y_{i,n}| \right|^{q} \right] \leq \left( \frac{2\Theta_{q,s}}{M^{1-\frac{1}{s}}}\right)^{q} \E_x \left[\left|\sum_{n=1}^{N}|Y_{n}|\right|^q\right].
\]
Therefore it follows from \eqref{eq:cotaN^2} and \eqref{eq:cotaP6_1} that

\begin{equation}\label{eq:cotaP6_2}
\E_x \left[\left|\E_x \left[\sum_{n=1}^{N} |Y_{n}|\right] - \frac{1}{M} \sum_{i=1}^{M} \sum_{n=1}^{N_i} |Y_{i,n}| \right|^{q} \right] \leq \left( \frac{2\Theta_{q,s}}{M^{1-\frac{1}{s}}}\right)^{q} K(\alpha,q) \frac{2-\widetilde{p}}{\widetilde{p}^2}.
\end{equation}
{\bf Step 7.}
Coupling the bounds obtained in \eqref{eq:cotaP4_2}, \eqref{eq:cotaP5_2} and \eqref{eq:cotaP6_2}, it holds that
\begin{equation}
\begin{aligned}
	&\E_x \left[ \int_{D}\left| \E_x \left[g\left(X_{\mathcal{I}(N)}\right)\right] - E_M(x) \right|^q dx  + \left|\E_x[N] - \frac{1}{M} \sum_{i=1}^{M} N_i\right|^q \right.  \\
	& \qquad   \left. + \left|\E_x\left[\sum_{n=1}^{N}|Y_n|\right]-\frac{1}{M}\sum_{i=1}^{M}\sum_{n=1}^{N_i} |Y_{i,n}|\right|^q\right] \\
	&\leq \left(\delta_g + \frac{2\Theta_{q,s}}{M^{1- \frac{1}{s}}}\right)^{q}|D|B^qd^{pq}C^q + \left(\frac{2\Theta_{q,s}}{M^{1-\frac{1}{s}}}\right)^q (1+ K(\alpha,q))\frac{2-\widetilde{p}}{\widetilde{p}^{2}} . 
	&=: \hbox{error}_g^q
\end{aligned}
\end{equation}
Using now that $\E(Z) \leq c<+\infty$, we summarize the following result.

\begin{lem}\label{lem:sacada}
There exists 
$\overline{N}_{i} \in \N$, $Y_{i,n}$ i.i.d. copies of $X_{\sigma_{B(0,1)}}$ under $\PP_0$, $i=1,...,M$, $n=1,...,\overline{N}_i$ such that
\begin{equation}
\begin{aligned}
	&\int_{D}\left| \E_x \left[g\left(X_{\mathcal{I}(N)}\right)\right] - \frac{1}{M} \sum_{i=1}^{M} \left(\mathcal{R}(\Phi_g)\right)\left(X^{i}_{\mathcal{I}(\overline{N}_i)}\right) \right|^q dx   \\
	& \quad  + \left|\E_x[N] - \frac{1}{M} \sum_{i=1}^{M} \overline{N}_i\right|^q + \left|\E_x\left[\sum_{n=1}^{N}|Y_n|\right]-\frac{1}{M}\sum_{i=1}^{M}\sum_{n=1}^{N_i} |Y_{i,n}|\right|^q\\
	&\qquad \leq \hbox{error}_g^q.
\end{aligned}
\end{equation}
\end{lem}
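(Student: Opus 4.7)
The plan is to exploit the fact that Step 7 has already produced a bound on the \emph{expectation} of a non-negative random quantity. Define on $(\Omega,\mathcal{F},\PP_x)$ the random variable
\[
Z(\omega) := \int_D \left|\E_x\left[g(X_{\mathcal{I}(N)})\right] - \frac{1}{M}\sum_{i=1}^M \left(\mathcal{R}(\Phi_g)\right)\left(X^{i}_{\mathcal{I}(N_i(\omega))}(\omega)\right)\right|^q dx + \left|\E_x[N] - \frac{1}{M}\sum_{i=1}^M N_i(\omega)\right|^q + \left|\E_x\!\left[\sum_{n=1}^N|Y_n|\right] - \frac{1}{M}\sum_{i=1}^M \sum_{n=1}^{N_i(\omega)}|Y_{i,n}(\omega)|\right|^q,
\]
where the $N_i$ and $Y_{i,n}$ are the i.i.d.\ copies of $N$ and of $X_{\sigma_{B(0,1)}}$ introduced in \eqref{eq:Y_n}. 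Note that the first integrand is non-negative, as are the remaining two terms, so $Z \geq 0$ pointwise. Hence $Z$ is a non-negative random variable, and by Fubini (justifying the interchange of $\E_x$ and $\int_D$ in the first summand, which was already used in Step 7), the conclusion of Step 7 reads
\[
\E_x[Z] \le \hbox{error}_g^q.
\]

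The remaining step is a standard probabilistic selection argument: for any non-negative random variable with finite expectation $c$, the event $\{Z \le c\}$ has positive probability, since otherwise $\E_x[Z] > c$, a contradiction. Applied here with $c = \hbox{error}_g^q$, there exists $\omega^* \in \Omega$ such that $Z(\omega^*) \le \hbox{error}_g^q$. We then define
\[
\overline{N}_i := N_i(\omega^*), \qquad Y_{i,n} := Y_{i,n}(\omega^*) \quad \text{for } i=1,\dots,M,\ n=1,\dots,\overline{N}_i,
\]
and by construction these satisfy the desired inequality. The independence structure and the distributional identities (each $Y_{i,n}$ being a copy of $X_{\sigma_{B(0,1)}}$ under $\PP_0$, each $\overline{N}_i$ being a copy of $N$ under $\PP_x$) are inherited from the original $N_i$ and $Y_{i,n}$ at the chosen sample point.

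There is no real obstacle here, as the lemma is essentially a derandomization of Step 7; the only subtle point to make explicit is that the three non-negative contributions must be combined \emph{before} extracting $\omega^*$, so that a single sample path works simultaneously for all three bounds (in particular, the second and third terms provide the control of $\overline{N}_i$ and of $\sum_n |Y_{i,n}|$ that will be needed later to replace $X^i_{\mathcal{I}(\overline{N}_i)}$ by a DNN approximation).
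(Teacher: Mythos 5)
Your proposal is correct and matches the paper's implicit argument exactly: the paper proves Lemma \ref{lem:sacada} in a single line (``Using now that $\E(Z)\leq c<+\infty$''), which is precisely the derandomization step you spell out --- a non-negative random variable with expectation at most $c$ must take a value at most $c$ on an event of positive probability, and picking one such $\omega^*$ yields the realizations $\overline{N}_i, Y_{i,n}$. The only minor looseness (shared with the paper's own phrasing) is in describing the frozen values $Y_{i,n}(\omega^*)$ as ``i.i.d.\ copies'' --- after evaluation at $\omega^*$ they are fixed numbers, not random variables --- but this does not affect the validity of the argument.
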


With a slight abuse of notation, we redefine $E_M$ from \eqref{eq:E_M} as
\begin{equation}\label{eq:E_M_final}
E_M(x) = \frac{1}{M} \sum_{i=1}^{M} \left(\mathcal{R}(\Phi_g)\right)\left(X^{i}_{\mathcal{I}(\overline{N}_i)}\right).
\end{equation}

\medskip

\noindent
{\bf Step 8.} We are going to prove now that $X^{i}_{\mathcal{I}(\overline{N}_i)}$ can be approximated by a ReLu DNN. Let $\delta_{\dist} \in (0,1).$ Recall that from \eqref{HD-1} there exists $\Phi_{\dist} \in \textbf{N}$ ReLu DNN such that for all $x \in D$
\[
\left|\left(\mathcal{R}(\Phi_{\dist})\right)(x) - \dist(x,\partial D)\right| \leq \delta_{\dist}.
\]
Define $\left(\Phi_{i,n}\right)_{i=1,...,M, n=1,...,\overline{N}_i} \in \textbf{N}$ as follows: for $x \in D$
\begin{equation}\label{eq:Phi_i1}
\left(\mathcal{R}(\Phi_{i,1})\right)(x) = x + Y_{i,1}\left(\mathcal{R}(\Phi_{\dist})\right)(x), 
\end{equation}
and for all $n = 2,...,\overline{N}_{i}$, $x \in D$
\begin{equation}\label{eq:Phi_in}
\left(\mathcal{R}(\Phi_{i,n})\right)(x) = \left(\mathcal{R}(\Phi_{i,n-1})\right)(x) + Y_{i,n}\left(\mathcal{R}(\Phi_{\dist}) \circ \mathcal{R}(\Phi_{i,n-1})\right)(x).
\end{equation}
In the Section \ref{Sect:6p3} we will see that $\left(\Phi_{i,n}\right)_{i=1,...,M, n=1,...,\overline{N}_i}$ is indeed a ReLu DNN. Note that, for $x \in D$, $i=1,...,M$,
\[
\left|X^{i}_{\mathcal{I}(1)}- \left(\mathcal{R}(\Phi_{i,1})\right)(x)\right| \leq |Y_{i,1}| \left|\left(\mathcal{R}(\Phi_{\dist})\right)(x) - \dist(x,\partial D)\right| \leq \delta_{\dist}\sum_{n=1}^{\overline{N}_i}|Y_{i,n}|,
\]
and for all $n = 2,...,\overline{N}_i$, by triangle inequality
\[
\begin{aligned}
&\left|X^{i}_{\mathcal{I}(n)}- \left(\mathcal{R}(\Phi_{i,n})\right)(x)\right|\leq \left|X^{i}_{\mathcal{I}(n-1)}- \left(\mathcal{R}(\Phi_{i,n-1})\right)(x)\right|\\
&\qquad+ |Y_{i,n}| \left|\dist\left(X^{i}_{\mathcal{I}(n-1)},\partial D\right) - \dist\left(\left(\mathcal{R}(\Phi_{i,n-1})\right)(x),\partial D\right)\right| \\
&\qquad+ |Y_{i,n}|\left| \dist\left(\left(\mathcal{R}(\Phi_{i,n-1})\right)(x),\partial D\right) -\left(\mathcal{R}(\Phi_{\dist}) \circ \mathcal{R}(\Phi_{i,n-1})\right)(x)\right|.
\end{aligned}
\]
Using the hypothesis on $\Phi_{\dist}$ and the fact that the function $x \to \dist(x,\partial D)$ is 1-Lipschitz one has
\[
\left|X^{i}_{\mathcal{I}(n)}- \left(\mathcal{R}(\Phi_{i,n})\right)(x)\right|\leq \left(\sum_{n=1}^{\overline{N}_i}|Y_{i,n}| + 1\right)\left|X^{i}_{\mathcal{I}(n-1)}- \left(\mathcal{R}(\Phi_{i,n-1})\right)(x)\right| + \delta_{\dist} \sum_{n=1}^{\overline{N}_i}|Y_{i,n}|.
\]
By the previous recursion one obtain that for all $i=1,...,M$
\[
\begin{aligned}
\left|X^{i}_{\mathcal{I}(\overline{N}_i)}- \left(\mathcal{R}(\Phi_{i,\overline{N}_i})\right)(x)\right|&\leq \left(\sum_{n=1}^{\overline{N}_i}|Y_{i,n}|\right) \delta_{\dist}\sum_{i=1}^{\overline{N}_i} \left(\sum_{n=1}^{\overline{N}_i}|Y_{i,n}| + 1\right)^{i-1} \\
& \leq \left(\sum_{n=1}^{\overline{N}_i}|Y_{i,n}|\right) \delta_{\dist} \frac{\left(\sum_{n=1}^{\overline{N}_i}|Y_{i,n}|+1\right)^{\overline{N}_i}-1}{\left(\sum_{n=1}^{\overline{N}_i}|Y_{i,n}|+1\right)-1}\\
& \leq \delta_{\dist}\left(\sum_{n=1}^{\overline{N}_i}|Y_{i,n}| + 1\right)^{\overline{N}_i}. 
\end{aligned}
\]

\medskip

\noindent
{\bf Step 9.} With the ReLu DNNs defined in Step 8, we are able to find a ReLu DNN that approximates $E_M(x)$.
Define $\Phi^{i}_{g} \in \textbf{N}$ as follows
\begin{equation}\label{eq:DNN_gi}
\left(\mathcal{R}(\Phi^{i}_g)\right)(x) = \left(\mathcal{R}(\Phi_{g}) \circ \mathcal{R}(\Phi_{i,\overline{N}_i})\right)(x),
\end{equation}
valid for $x \in D$. Notice from Lemma \ref{lem:DNN_comp} that $\Phi_g^i$ is indeed a ReLu DNN. For full details see Section \ref{Sect:6p1}. By triangle inequality one has
\[
\begin{aligned}
&\left|\left(\mathcal{R}(\Phi_g)\right)\left(X^{i}_{\mathcal{I}(\overline{N}_i)}\right)-\left(\mathcal{R}(\Phi^{i}_g)\right)(x)\right| \\
&\qquad \leq \left|\left(\mathcal{R}(\Phi_g)\right)\left(X^{i}_{\mathcal{I}(\overline{N}_i)}\right) - g \left(X^{i}_{\mathcal{I}(\overline{N}_i)}\right)\right| + \left|g \left(X^{i}_{\mathcal{I}(\overline{N}_i)}\right) - \left(g \circ \mathcal{R}(\Phi_{i,\overline{N}_i})\right)(x)\right| \\
&\qquad \qquad + \left|\left(g \circ \mathcal{R}(\Phi_{i,\overline{N}_i})\right)(x) - \left(\mathcal{R}(\Phi_g^{i})\right)(x)\right|.
\end{aligned}
\]
We use the hypothesis \eqref{H1} and the assumption that $g$ is $L_g$-Lipschitz to obtain
\[
\begin{aligned}
	&\left|\left(\mathcal{R}(\Phi_g)\right)\left(X^{i}_{\mathcal{I}(\overline{N}_i)}\right)-\left(\mathcal{R}(\Phi^{i}_g)\right)(x)\right|\\
	&\quad \leq Bd^p \delta_g \left(\left(1+ \left|X^{i}_{\mathcal{I}(\overline{N}_i)}\right|\right)^p + \left(1+ \left|\left(\mathcal{R}(\Phi_{i,\overline{N}_i})\right)(x)\right|\right)^p\right) + L_g\left|X^{i}_{\mathcal{I}(\overline{N}_i)}- \left(\mathcal{R}(\Phi_{i,\overline{N}_i})\right)(x)\right|.
\end{aligned}
\]
By triangle inequality one has
\[
\left|\left(\mathcal{R}(\Phi_{i,\overline{N}_i})\right)(x)\right| \leq \left|X_{\mathcal{I}(\overline{N}_i)}^i - \left(\mathcal{R}(\Phi_{i,\overline{N}_i})\right)(x)\right| + \left|X_{\mathcal{I}(\overline{N}_i)}^i\right|.
\]
With the previous estimate and using that $(\cdot)^{p}$ is a convex function, we obtain
\[
\begin{aligned}
&\left|\left(\mathcal{R}(\Phi_g)\right)\left(X^{i}_{\mathcal{I}(\overline{N}_i)}\right)-\left(\mathcal{R}(\Phi^{i}_g)\right)(x)\right| \\
&   \leq Bd^p\delta_g\left(1 + \left|X_{\mathcal{I}(\overline{N}_i)}^i\right|\right)^p + L_g\left|X^{i}_{\mathcal{I}(\overline{N}_i)}- \left(\mathcal{R}(\Phi_{i,\overline{N}_i})\right)(x)\right|\\
&\quad  + 2^{p-1}Bd^p \delta_g \left(\left(1 + \left|X_{\mathcal{I}(\overline{N}_i)}^i\right|\right)^p + \left|X_{\mathcal{I}(\overline{N}_i)}^i - \left(\mathcal{R}(\Phi_{i,\overline{N}_i})\right)(x)\right|^p\right).
\end{aligned}
\]
Notice that from \eqref{eq:paseo},
\[
\left|X_{\mathcal{I}(\overline{N}_i)}^i\right| \leq |x| + \diam(D) \sum_{n=1}^{\overline{N}_i} |Y_{i,n}|.
\]
Therefore, in addition to Step 6, one has
\begin{equation}\label{eq:step8g}
\begin{aligned}
	&\left|\left(\mathcal{R}(\Phi_g)\right)\left(X^{i}_{\mathcal{I}(\overline{N}_i)}\right)-\left(\mathcal{R}(\Phi^{i}_g)\right)(x)\right| \leq 3Bd^p\delta_g \left(1 + |x| + \diam(D)\sum_{n=1}^{\overline{N}_i} |Y_{i,n}|\right)^p\\
	&\qquad + L_g \delta_{\dist}\left(1 + \sum_{n=1}^{\overline{N}_i} |Y_{i,n}|\right)^{\overline{N}_i}+ 2Bd^p\delta_g \delta_{\dist}^p \left(1 + \sum_{n=1}^{\overline{N}_i} |Y_{i,n}|\right)^{p \overline{N}_i}.
\end{aligned}
\end{equation}
Now define for $\varepsilon \in (0,1)$ the ReLu DNN $\Psi_{1,\varepsilon}$ such that it satisfies for $x \in D$
\[
\left(\mathcal{R}(\Psi_{1,\varepsilon})\right)(x) = \frac{1}{M} \sum_{i=1}^{M} \left(\mathcal{R}(\Phi^{i}_g)\right)(x).
\]
This is the requested DNN. Section \ref{Sect:6p3} shows that $\Psi_{1,\varepsilon}$ is a ReLu DNN. From the bound obtained in \eqref{eq:step8g}, we have that
\[
\begin{aligned}
&\left|E_M(x)- \left(\mathcal{R}(\Psi_{1,\varepsilon})\right)(x)\right| \leq \frac{1}{M} \sum_{i=1}^{M} \left|\left(\mathcal{R}(\Phi_g)\right)\left(X^{i}_{\mathcal{I}(\overline{N}_i)}\right)-\left(\mathcal{R}(\Phi^{i}_g)\right)(x)\right|\\
&\qquad\leq 3Bd^p \delta_g \left(K_1 + \diam(D)\sum_{i=1}^{M} \sum_{n=1}^{\overline{N}_i} |Y_{i,n}|\right)^p + L_g \delta_{\dist} \left( 1 + \sum_{i=1}^{M} \sum_{n=1}^{\overline{N}_i} |Y_{i,n}|\right)^{\sum_{i=1}^{M}\overline{N}_i} \\
&\qquad \qquad+ 2Bd^p\delta_g\delta_{\dist}^p \left(1 + \sum_{i=1}^{M} \sum_{n=1}^{\overline{N}_i} |Y_{i,n}|\right)^{p \sum_{i=1}^{M} \overline{N}_i}.
\end{aligned}
\]

\medskip

\noindent
{\bf Step 10.}
We want to bound error$_g$. Using that $\frac{1}{q} < 1$ one has
\[
\begin{aligned}
\hbox{error}_g &\leq \left(\delta_g + \frac{2\Theta_{q,s}}{M^{1-\frac{1}{s}}}\right) |D|^{\frac{1}{q}} B d^p C + 2 \frac{\Theta_{q,s}}{M^{1-\frac{1}{s}}} \left(1+K(\alpha,q)^\frac{1}{q}\right) \left(\frac{2-\widetilde{p}}{\widetilde{p}^2}\right)^{\frac{1}{q}}.\\
&= 2\frac{\Theta_{q,s}}{M^{1-\frac{1}{s}}} \left(|D|^{\frac{1}{q}}Bd^pC + \left(\frac{2-\widetilde{p}}{\widetilde{p}^2}\right)^{\frac{1}{q}}\left(1+K(\alpha,q)^{\frac{1}{q}}\right)\right) + \delta_g |D|^\frac{1}{q}Bd^{p}C
\end{aligned}
\]
Denote
\begin{equation}\label{eq:C1C2}
C_1 = 2\Theta_{q,s} \left(|D|^{\frac{1}{q}}Bd^pC + \left(\frac{2-\widetilde{p}}{\widetilde{p}^2}\right)^{\frac{1}{q}}\left(1+K(\alpha,q)^{\frac{1}{q}}\right)\right), \quad \hbox{and} \quad C_2 = |D|^\frac{1}{q}Bd^{p}C.
\end{equation}
Note that $C_1$ and $C_2$ are polynomial on the dimension $d$. Then
\begin{equation}\label{eq:error_g}
\hbox{error}_g \leq \frac{C_1}{M^{1-\frac{1}{s}}} + C_2 \delta_g.
\end{equation}
In adition, thanks to Step 5, one has
\[
\sum_{i=1}^{M} \sum_{n=1}^{\overline{N}_i} |Y_{i,n}| \leq M \left( \hbox{error}_g + \E_x \left[\sum_{n=1}^{N} |Y_{i,n}|\right]\right) \leq M \left( \hbox{error}_g + K(\alpha,1) \frac{1}{\widetilde{p}} \right).
\]
Define $C_3 := K(\alpha,1)/\widetilde{p}$, then
\[
\begin{aligned}
\sum_{i=1}^{M} \sum_{n=1}^{\overline{N}_i} |Y_{i,n}| &\leq M^{\frac{1}{s}}C_1 + M\delta_{g}C_2 + MC_3\\
&\leq M^{\frac{1}{s}}C_1 + M(C_2 + C_3).
\end{aligned}
\]
On the other hand side define $C_4 = \frac{1}{\widetilde{p}}$, then
\[
\begin{aligned}
\sum_{i=1}^{M} \overline{N}_i &\leq M (\hbox{error}_g + \E_x[N]) \leq M^{\frac{1}{s}} C_1 + M\delta_g C_2 + \frac{M}{\widetilde{p}}\\
&\leq M^{\frac{1}{s}} C_1 + M(C_2 + C_4).
\end{aligned}
\]
\medskip

\noindent
{\bf Step 11.} Using the auxiliary Lemma \ref{lem:sacada} and \eqref{eq:error_g}, it follows that
\[
\left(\int_D \left|\E_x \left[g\left(X_{\mathcal{I}(N)}\right)\right] - E_M(x)\right|^{q}dx\right)^{\frac{1}{q}} \leq \frac{C_1}{M^{1-\frac 1s}} + C_2 \delta_g.
\]
In addition, from Step 9 and \eqref{eq:error_g} one has
\[
\begin{aligned}
	&\left(\int_D \left|E_M(x) - \left(\mathcal{R}(\Psi_1)\right)(x)\right|^{q}dx\right)^{\frac{1}{q}}\\
	&\qquad\leq 3|D|^{\frac{1}{q}}Bd^p\delta_g \left(K_1 + \diam(D)\left(M^{\frac{1}{s}}C_1 + M (C_2 + C_3)\right)\right)^p \\
	&\qquad \quad+ |D|^{\frac{1}{q}}L_g \delta_{\dist} \left(1 + M^{\frac{1}{s}}C_1 + M (C_2 + C_3)\right)^{M^{\frac{1}{s}}C_1 + M (C_2 + C_4)} \\
	&\qquad \quad + 2|D|^{\frac{1}{q}}Bd^p\delta_g \delta_{\dist}^p  \left(1 + M^{\frac{1}{s}}C_1 + M (C_2 + C_3)\right)^{p\left(M^{\frac{1}{s}}C_1 + M(C_2 + C_4)\right)}. 
\end{aligned}
\]
Therefore, Minkowski inequality implies that
\begin{equation}\label{eq:step11g}
\begin{aligned}
	&\left(\int_D \left|\E_x \left[g\left(X_{\mathcal{I}(N)}\right)\right] - \left(\mathcal{R}(\Psi_{1,\varepsilon})\right)(x)\right|^{q}dx\right)^{\frac{1}{q}} \\
	&\leq \left(\int_D \left|\E_x \left[g\left(X_{\mathcal{I}(N)}\right)\right] - E_M(x)\right|^{q}dx\right)^{\frac{1}{q}} + \left(\int_D \left|E_M(x) - \left(\mathcal{R}(\Psi_1)\right)(x)\right|^{q}dx\right)^{\frac{1}{q}}\\
	&\leq \frac{C_1}{M^{1-\frac{1}{s}}} + C_2 \delta_g  + 3|D|^{\frac{1}{q}}Bd^p\delta_g \left(K_1 + \diam(D)\left(M^{\frac{1}{s}}C_1 + M (C_2 + C_3)\right)\right)^p \\
	&\qquad+ |D|^{\frac{1}{q}}L_g \delta_{\dist} \left(1 + M^{\frac{1}{s}}C_1 + M (C_2 + C_3)\right)^{M^{\frac{1}{s}}C_1 + M (C_2 + C_4)} \\
	&\qquad + 2|D|^{\frac{1}{q}}Bd^p\delta_g \delta_{\dist}^p  \left(1 + M^{\frac{1}{s}}C_1 + M (C_2 + C_3)\right)^{p\left(M^{\frac{1}{s}}C_1 + M(C_2 + C_4)\right)}. 
\end{aligned}
\end{equation}
for $\varepsilon \in (0,1)$ let $M \in \N$ large enough such that
\[
M = \left\lceil \left(\frac{5C_1}{\varepsilon}\right)^{\frac{s}{s-1}} \right\rceil
\],
and $\delta_{\dist} \in (0,1)$ small enough such that
\[
\delta_{\dist} = \frac{\varepsilon}{5|D|^{\frac{1}{q}}L_g} \left(1 + M^{\frac{1}{s}}C_1 + M (C_2 + C_3)\right)^{-\left(M^{\frac{1}{s}}C_1 + M (C_2 + C_4)\right)}.
\]
Let
\begin{equation}\label{C5}
C_5 = \max \left\{ C_2, 3|D|^{\frac{1}{q}}Bd^p\left(K_1 + \diam(D)\left(M^{\frac{1}{s}}C_1 + M(C_2 + C_3)\right)\right)^{p},\frac{2|D|^{\frac{1}{q}}Bd^p}{5^p|D|^{\frac{p}{q}} L_g^p} \right\},
\end{equation}
and consider $\delta_g \in (0,1)$ small enough such that
\[
\delta_g = \frac{\varepsilon }{5C_5}.
\] 
Therefore each term of \eqref{eq:step11g} can be bounded by $\varepsilon/5$. Then 
\[
\begin{aligned}
	&\left(\int_D \left|\E_x \left[g\left(X_{\mathcal{I}(N)}\right)\right] - \left(\mathcal{R}(\Psi_{1,\varepsilon})\right)(x)\right|^{q}dx\right)^{\frac{1}{q}} \leq \varepsilon.\\
\end{aligned}
\]
This allos us to conclude that \ref{eq:2.2} can be approximated in $L^{q}(D)$ by a DNN $\Psi_{1,\varepsilon}$ with accurateness $\varepsilon \in (0,1)$.
\medskip

\subsection{Proof of Proposition \ref{Prop:homo}: Quantification of DNNs} \label{Sect:6p3}
In this Section we will prove that $\Psi_{1,\varepsilon}$ is in fact a ReLu DNN wich does not suffer of the curse of dimensionality.

\medskip
\noindent
{\bf Step 12.} We now use the Definitions and Lemmas of Section \ref{Sect:3} to study $\Psi_{1,\varepsilon}$. Let
\[
\beta_{\dist} = \mathcal{D}(\Phi_{\dist}) \quad \hbox{and} \quad H_{\dist} = \dim(\beta_{\dist}) - 2.
\]
And we will verify by induction that  for all $i=1,...,M$ $n=1,...,\overline{N}_i$, $\Phi_{i,n}$ (defined in \ref{eq:Phi_i1} and \ref{eq:Phi_in}) is a ReLu DNN that satisfy 
\begin{equation}\label{eq:HI_homo}
\mathcal{D}(\Phi_{i,n}) = \overunderset{n}{m=1}{\odot}(d\mathfrak{n}_{H_{\dist}+2}\boxplus \widetilde{\beta}_{\dist}),
\end{equation}
where
\[
\widetilde{\beta}_{\dist} = (\beta_{\dist,0},...,\beta_{\dist,H_{\dist}},d) \in \N^{H_{\dist}+2}.
\]
If \ref{eq:HI_homo} is true, then from \eqref{eq:HI_homo} and the definition of the operator $\odot$ is easy to see that
\begin{equation}\label{eq:HI_homo2}
\vertiii{\mathcal{D}(\Phi_{i,n}) } \leq 2d + \vertiii{\mathcal{D}(\Phi_{\dist})}, \quad \hbox{and} \quad \dim(\mathcal{D}(\Phi_{i,n})) = (H_{\dist}+1)n+1.
\end{equation}
For $n=1$, recall the definition of $\Phi_{i,1}$ from \eqref{eq:Phi_i1}. By Lemma \ref{lem:DNN_mat} one has that
\[
Y_{i,1} \mathcal{R}(\Phi_{\dist}) \in \mathcal{R}\left(\left\{\Phi \in \textbf{N}: \mathcal{D}(\Phi) = \widetilde{\beta}_{\dist} \right\}\right).
\]
By Lemma \ref{lem:DNN_id}, the identity function can be represented by a ReLu DNN with $H_{\dist} + 2$ number of layers. Therefore by Lemma \ref{lem:DNN_sum} it follows that $\mathcal{R}(\Phi_{i,1}) \in C(D,\R^d)$ and
\[
\mathcal{D}(\Phi_{i,1}) = d\mathfrak{n}_{H_{\dist} + 2} \boxplus \widetilde{\beta}_{\dist}, \qquad \dim(\mathcal{D}(\Phi_{i,1})) = H_{\dist} + 2.
\]
Moreover
\[
\vertiii{\mathcal{D}(\Phi_{i,1}) } \leq 2d + \vertiii{\mathcal{D}(\Phi_{\dist})}.
\]
Now suppose that for $n=2,...,\overline{N}_i-1$ that \eqref{eq:HI_homo} is valid. Recall the definition of $\Phi_{i,n}$ from \eqref{eq:Phi_in}. Notice that $\mathcal{R}(\Phi_{i,n+1})$ can be written as
\[
\mathcal{R}(\Phi_{i,n+1}) = \mathcal{R}(\widetilde{\Phi}_{i,n+1}) \circ \mathcal{R}(\Phi_{i,n}).
\]
where $\widetilde{\Phi}_{i,n} \in \textbf{N}$ is a ReLu DNN that satisfies
\[
\left(\mathcal{R}(\widetilde{\Phi}_{i,n})\right)(x) = x + Y_{i,n} \left(\mathcal{R}(\Phi_{\dist})\right)(x).
\]
By the same arguments as in the case $n=1$, it follows for all $n=2,...,\overline{N}_i$ that 
\[
\mathcal{D}(\widetilde{\Phi}_{i,n}) = d\mathfrak{n}_{H_{\dist}+2} \boxplus \widetilde{\beta}_{\dist}, \quad \dim(\mathcal{D}(\widetilde{\Phi}_{i,n})) = H_{\dist} + 2, 
\]
and
\[
\vertiii{\mathcal{D}(\widetilde{\Phi}_{i,n})} \leq 2d + \vertiii{\mathcal{D}(\Phi_{\dist})}.
\]
Therefore from the inductive hypothesis \eqref{eq:HI_homo} and Lemma \ref{lem:DNN_comp}, $\Phi_{i,n+1}$ is a ReLu DNN that satisfies 
\[
\mathcal{D}(\Phi_{i,n+1}) = (d\mathfrak{n}_{H_{\dist}+2} \boxplus \widetilde{\beta}_{\dist}) \odot \left(\overunderset{n}{m=1}{\odot}(d\mathfrak{n}_{H_{\dist}+2}\boxplus \widetilde{\beta}_{\dist})\right) = \overunderset{n+1}{m=1}{\odot}(d\mathfrak{n}_{H_{\dist}+2}\boxplus \widetilde{\beta}_{\dist}).
\]
Then the claim for $\Phi_{i,n}$ is proved for any $i=1,...,M$, $n=1,...,\overline{N}_i$. Recall that \ref{eq:HI_homo2} is valid too. Therefore
\[
\mathcal{D}(\Phi_{i,\overline{N}_i}) = \overunderset{\overline{N}_i}{m=1}{\odot}(d\mathfrak{n}_{H_{\dist}+2}\boxplus \widetilde{\beta}_{\dist}).
\]
Moreover
\[
\vertiii{\mathcal{D}(\Phi_{i,\overline{N}_i})} \leq 2d + \vertiii{\mathcal{D}(\Phi_{\dist})}, \quad \hbox{and} \quad \dim(\mathcal{D}(\Phi_{i,\overline{N}_i})) = (H_{\dist}+1)\overline{N}_i+1.
\]
Let $\beta_g = \mathcal{D}(\Phi_g)$ and $H_g = \dim(\beta_g) - 2$. By Lemma \ref{lem:DNN_comp} one has that 
\[
\mathcal{D}(\Phi_{g}^{i}) = \beta_g \odot \left(\overunderset{\overline{N}_i}{m=1}{\odot}(d\mathfrak{n}_{H_{\dist}+2}\boxplus \widetilde{\beta}_{\dist})\right), \qquad \dim(\mathcal{D}(\Phi_{g}^{i})) =(H_{\dist}+1) \overline{N}_i + H_g + 2.
\]
Moreover
\[
\vertiii{\mathcal{D}(\Phi_g^{i})} \leq \max\{\vertiii{\mathcal{D}(\Phi_g)},2d+\vertiii{\mathcal{D}(\Phi_{\dist})} \}. 
\]

Recall that $\overline{N}_i$ not necessarily be the same for $i=1,...,M$. Now we need that for all $i=1,...,M$, $\Phi_{g}^{i}$ have the same number of layers to use Lemma \ref{lem:DNN_sum}. For any $i=1,...,M$ define
\[
H_i = (H_{\dist}+1)\left(\sum_{j=1}^{M} \overline{N}_j - \overline{N}_i\right) - 1.
\]
By Lemma \ref{lem:DNN_id}, The identity function can be represented by a ReLu DNN with $H_i$ hidden layers. Recall the definition of $\Phi_g^i$ in \eqref{eq:DNN_gi}. Using Lemma \ref{lem:DNN_comp} we have that 
\[
\mathcal{D}(\Phi_g^i) = \mathfrak{n}_{H_i + 2} \odot \beta_g \odot \left(\overunderset{\overline{N}_i}{m=1}{\odot}(d\mathfrak{n}_{H_{\dist}+2}\boxplus \widetilde{\beta}_{\dist})\right),
\]
and
\[
\dim(\mathcal{D}(\Phi_{g}^{i})) = (H_{\dist}+1)\sum_{i=1}^{M}\overline{N}_i  + H_g + 2.
\]
Now we use Lemma \ref{lem:DNN_sum} to conclude that 
\[
\mathcal{D}(\Psi_{1,\varepsilon}) = \overset{M}{\underset{i=1}{\boxplus}} \left(\mathfrak{n}_{H_i+2} \odot \beta_g \odot \left(\overunderset{\overline{N}_i}{m=1}{\odot}(d\mathfrak{n}_{H_{\dist}+2}\boxplus \widetilde{\beta}_{\dist})\right)\right),
\] 
and
\[
\dim(\mathcal{D}(\Psi_{1,\varepsilon})) = (H_{\dist}+1)\sum_{i=1}^M \overline{N}_i  + H_g + 2.
\]
In addition
\begin{equation}\label{eq:normpsi1eps}
\begin{aligned}
	\vertiii{\mathcal{D}(\Psi_{1,\varepsilon})} &\leq \sum_{i=1}^{M} \max\{\vertiii{\mathcal{D}(\Phi_g)},2d+\vertiii{\mathcal{D}(\Phi_{\dist})}\}\\
	&\leq M(\vertiii{\mathcal{D}(\Phi_g)} + 2d + \vertiii{\mathcal{D}(\Phi_{\dist})}).
\end{aligned}
\end{equation}
Notice fron \eqref{eq:C1C2} that the constants $C_1$ and $C_2$ are bounded by a multiple of $|D|^{\frac 1q} d^p$. Therefore, by choice of $M$,
\begin{equation}\label{eq:Mfinalg}
	M \leq B_1 |D|^{\frac{s}{q(s-1)}}d^{\frac{ps}{s-1}}\varepsilon^{-\frac{s}{s-1}},
\end{equation}
where $B_1>0$ is a generic constant. With \eqref{eq:Mfinalg} and the bound of $C_1$ and $C_2$, we have that $C_5$ defined in \eqref{C5} is bounded by a multiple of
\[
|D|^{\frac{1}{q}\left(1+p+\frac{ps}{s-1}\right)}d^{p+p^2+\frac{p^2s}{s-1}}\varepsilon^{-\frac{ps}{s-1}}.
\]
By the choice of $\delta_g$ we have, for $B_2>0$ a generic constant that 
\begin{equation}\label{eq:deltagfinalg}
	\delta_g^{-a} \leq B_2 |D|^{\frac{a}{q}\left(1+p+\frac{ps}{s-1}\right)}d^{ap+ap^2+\frac{ap^2s}{s-1}}\varepsilon^{-a-\frac{aps}{s-1}}.
\end{equation}
For $\delta_{\dist}$ we estimate $\log(\delta_{\dist}^{-1})$ as indicates Assumption \ref{Sup:D}. By the choice of $\delta_{\dist}$ and properties of $\log$ function, we have that
\[
\log(\delta_{\dist}^{-1}) \leq 5|D|^{\frac 1q}L_g \varepsilon^{-1} +(M^{\frac 1s}C_1 + M(C_2 + C_4))(1 + M^{\frac 1s}C_1 + M(C_2 + C_3)).
\]
Therefore
\begin{equation}\label{eq:deltadistfinalg}
	\lceil \log(\delta_{\dist}^{-1}) \rceil^{a} \leq B_3 |D|^{\frac{2a}{q}\left(1+\frac{s}{s-1}\right)}d^{2ap\left(1+\frac{s}{s-1}\right)}\varepsilon^{-a-\frac{2as}{s-1}},
\end{equation}
where $B_3>0$ is a generic constant. Assumptions \ref{Sup:g} and \ref{Sup:D}, in addition with \eqref{eq:normpsi1eps} implies that
\[
\vertiii{\mathcal{D}(\Psi_{1,\varepsilon})} \leq B_4 d^b M (\delta_g^{-a}+\lceil\log(\delta_{\dist}^{-1})\rceil^{a}),
\]
where $B_4>0$ is a generic constant. Therefore, from \eqref{eq:Mfinalg}, \eqref{eq:deltagfinalg} and \eqref{eq:deltadistfinalg} we conclude that there exists $\widetilde{B}>0$ such that
\[
\vertiii{\mathcal{D}(\Psi_{1,\varepsilon})} \leq \widetilde{B}|D|^{\frac{1}{q}\left(2a + ap+\frac{s}{s-1}(1+2a+ap)\right)} d^{b+2ap+ap^2+\frac{ps}{s-1}(1+2a + ap)}\varepsilon^{-a-\frac{s}{s-1}(1+2a+ap)}.
\]
Note also that this implies from Remark \ref{rem:CoD} that $\Psi_{1,\varepsilon}$ overcomes the curse of dimensionality. This completes the proof of Proposition \ref{Prop:homo}.

\section{Approximation of solutions of the Fractional Dirichlet problem using DNNs: the source case}\label{Sect:7}

\subsection{Non-homogeneous Fractional Laplacian} In the previous subsection we have proved the the solution \eqref{eq:2.2} of the fractional Dirichlet Problem without source can be approximated by a ReLu DNN. In this subsection we focus in the term
\begin{equation}\label{eq:7.1}
\E_x \left[\sum_{n=1}^{N} r_n^{\alpha} \kappa_{d,\alpha} \E^{(\mu)}\left[f\left(X_{\mathcal{I}(n-1)}+r_n \cdot\right)\right]\right].
\end{equation}
We will prove that \eqref{eq:7.1} can be approximated by a ReLu DNN that does not suffer of the curse of dimensionality. Notice that \eqref{eq:7.1} corresponds to the extra term in the solution \eqref{u(x)} of the fractional Dirichlet Problem with source \eqref{eq:1.1}. In order to do this approximation, the following assumption will be introduced
\begin{ass}\label{Sup:f}
	Let $d \geq 2$. Let $f: D \to \R$  a function satisfying \eqref{Hf0}. Let $\delta_f \in (0,1)$, $a,b\geq 1$ and $B>0$. Then there exists a ReLu DNN $\Phi_f \in \textbf{N}$ with
	\begin{enumerate}
		\item $\mathcal{R}(\Phi_f):D \to \R$ is $\widetilde{L}_f$-Lipschitz continuous, $\widetilde{L}_f>0$, and
		\item The following are satisfied:
		\begin{align}
			|f(x) - \left(\mathcal{R}(\Phi_f)\right)(x)| &\leq \delta_f, \qquad x \in D. \tag{Hf-1} \label{H5}\\
			\vertiii{\mathcal{D}(\Phi_f)} &\leq Bd^b\delta_f^{-a}. \tag{Hf-2} \label{H6} 
		\end{align} 
	\end{enumerate}
\end{ass}

\begin{rem}\label{rem:Rf}
If $\Phi_f$ satisfies Assumptions \ref{Sup:f}, then it holds for all $x \in D$ that
\[
	|\left(\mathcal{R}(\Phi_f)\right)(x)| \leq |f(x)-\left(\mathcal{R}(\Phi_f)\right)(x)|+|f(x)| \leq \delta_f + \norm{f}_{L^{\infty}(D)}.
\]
Then
\begin{equation}\label{eq:Phif}
	\norm{\mathcal{R}(\Phi_f)}_{L^{\infty}(D)} \leq \delta_f + \norm{f}_{L^{\infty}(D)}.
\end{equation}
\end{rem}

The main result of this section is the following proposition, that ensures the existence of a ReLu DNN such that \eqref{eq:7.1} is well approximated
\begin{prop}\label{Prop:6p1}
	Let $\alpha \in (1,2)$, $L_f>0$ and
	\begin{equation}\label{condiciones_f}
		\hbox{ $p,s \in (1,\alpha)$ such that $s < \frac{\alpha}{p}$, \quad and \quad $q \in \left[s, \frac{\alpha}{p} \right)$. }
	\end{equation}
	Suppose that $f$ is a function satisfying \eqref{Hf0} and Assumptions \ref{Sup:f}. Suppose additionally that $D$ satisfies Assumptions \ref{Sup:D}.
	
	\medskip 
%
	Then  for all $\widetilde{\varepsilon} \in (0,1)$, there exists a ReLu DNN $\Psi_{2,\widetilde{\varepsilon}}$ such that
	\begin{enumerate}
		\item Proximity in $L^q(D)$: 
		\begin{equation}\label{eq:prop_f}
			\left(\int_{D} \left| \E_{x}\left[\sum_{n=1}^{N} r_n^{\alpha} V_1(0,f(X_{\mathcal{I}(n-1)}+r_n \cdot))\right] - \left(\mathcal{R}(\Psi_{2,\widetilde{\varepsilon}})\right)(x)  \right|^q\right)^{\frac{1}{q}} \leq \widetilde{\varepsilon}.
		\end{equation}
		\item Realization: $\mathcal{R}(\Psi_{2,\widetilde{\varepsilon}})$ has the following structure: there exist $M_1, M_2 \in \N$, $\overline{N}_{i} \in \N$, $Y_{i,n}$ i.i.d copies of $X_{\sigma_{B(0,1)}}$ under $\PP_0$, $v_{i,j,n}$ i.i.d copies with law $\mu$ under $B(0,1)$, for $i =1,...,M_1$, $j=1,...,M_2$, $n=1,...,\overline{N}_{i}$ such that for all $x \in D$,
		\begin{equation}
			\begin{aligned}
				\left(\mathcal{R}(\Psi_{2,\widetilde{\varepsilon}})\right)(x) = \frac{1}{M_1} \sum_{i=1}^{M_1} \sum_{n=1}^{\overline{N}_i} \kappa_{d,\alpha}\left(\mathcal{R}(\Upsilon)\right)\Big( \left(\mathcal{R}(\Phi_{\alpha}) \circ \mathcal{R}(\Phi_r^{i,n})\right)(x), \left(\mathcal{R}(\Phi_{f}^{i,n})\right)(x)\Big),
			\end{aligned}
		\end{equation}
		where for all $y \in D$
		\begin{equation}
			\left(\mathcal{R}(\Phi_{r}^{i,n})\right)(y)=\left(\mathcal{R}(\Phi_{\dist}) \circ \mathcal{R}(\Phi_{i,n-1})\right)(y),
		\end{equation}
		\begin{equation}
			\left(\mathcal{R}(\Phi_f^{i,n})\right)(y) = \frac{1}{M_2} \sum_{j=1}^{M_2} \left(\mathcal{R}(\Phi_f) \circ \left(\mathcal{R}(\Phi_{i,n}) + v_{i,j,n} \mathcal{R}(\Phi_{r}^{i,n})\right)\right)(y),
		\end{equation}
		and $\mathcal{R}(\Phi_{i,n})$ is a Relu DNN that approximates $X_{\mathcal{I}(n)}^{i}$, for $i=1,...,M_1$, $n = 1,...,\overline{N}_i$.
		\item Bounds: there exists $\widetilde{B}>0$ such that
		\begin{equation}
			\vertiii{\mathcal{D}(\Psi_{2,\widetilde{\varepsilon}})} \leq \widetilde{B} |D|^{\frac{1}{q}\left(1 +2a+\frac{2s}{s-1}(1+a)\right)}d^b\widetilde{\varepsilon}^{-a-\frac{2s}{s-1}(1+a)}.
		\end{equation}
	\end{enumerate}
\end{prop}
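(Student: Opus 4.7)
The plan is to mirror the strategy used in the proof of Proposition \ref{Prop:homo}, with the additional complication that the source term \eqref{eq:7.1} contains both an outer expectation over WoS trajectories and an inner expectation against the probability measure $\mu$, and that the factor $r_n^\alpha$ must itself be replaced by a DNN and then multiplied against the inner $f$-average via the multiplication net $\Upsilon$ of Lemma \ref{lem:DNN_mult}. First I would introduce a two-level Monte Carlo sampler: for the outer expectation, $M_1$ i.i.d.\ copies of the WoS data $(X^i_{\mathcal{I}(\cdot)}, \overline{N}_i, r_n^i, Y_{i,n})$ as in the proof of Proposition \ref{Prop:homo}; for the inner expectation $\E^{(\mu)}$, an independent family $(v_{i,j,n})_{j=1}^{M_2}$ of $\mu$-distributed samples on $B(0,1)$ for each pair $(i,n)$. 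Set
\[
E_{M_1,M_2}(x) := \frac{1}{M_1}\sum_{i=1}^{M_1}\sum_{n=1}^{\overline{N}_i} (r_n^i)^\alpha \kappa_{d,\alpha} \cdot \frac{1}{M_2}\sum_{j=1}^{M_2} f\!\bigl(X^i_{\mathcal{I}(n-1)} + r_n^i v_{i,j,n}\bigr).
\]
Applying Corollary \ref{cor:MCq} first to the inner average conditionally on $\sigma(X,N)$, and then to the outer average, while controlling the random prefactor $\sum_n r_n^\alpha\kappa_{d,\alpha}$ in $L^2(\Omega,\PP_x)$ through Lemma \ref{lem:aux_f} (bounded by $\E_x[\sigma_D^2] < \infty$ thanks to Theorem \ref{teo:N}) and using $\norm{f}_{L^\infty(D)} < \infty$, I would obtain an $L^q(\Omega\times D,\PP_x\otimes dx)$-bound on $\E_x[\,\cdot\,] - E_{M_1,M_2}$ of order $\widetilde C_1 M_1^{-(1-1/s)} + \widetilde C_2 M_2^{-(1-1/s)}$, with constants polynomial in $d$ and $|D|$.

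Next I would replace each ingredient by its DNN surrogate in order. Replace $f$ by $\mathcal{R}(\Phi_f)$ via \eqref{H5}, incurring an error $\delta_f$ multiplied by the random number of summands (absorbed again by Lemma \ref{lem:aux_f}). Replace the WoS vertices $X^i_{\mathcal{I}(n-1)}$ by the realizations $\mathcal{R}(\Phi_{i,n-1})$ built as in \eqref{eq:Phi_i1}--\eqref{eq:Phi_in}, using the $\widetilde L_f$-Lipschitz property of $\mathcal{R}(\Phi_f)$ (Assumption \ref{Sup:f}) to turn the location error of Step 8 of the proof of Proposition \ref{Prop:homo} into a $\widetilde L_f \delta_{\dist}$-error in the $f$-evaluation. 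Replace $r_n^i = \dist(X^i_{\mathcal{I}(n-1)},\partial D)$ by $\mathcal{R}(\Phi_{\dist})\circ\mathcal{R}(\Phi_{i,n-1})$ and then $(r_n^i)^\alpha$ by $\mathcal{R}(\Phi_\alpha)$ of that value, using \eqref{HD-1}, \eqref{HD-3} and the Lipschitz property of $\mathcal{R}(\Phi_\alpha)$. Finally, combine the scalar $(r_n^i)^\alpha$ with the inner average through the multiplication DNN $\Upsilon$ of Lemma \ref{lem:DNN_mult} at a tolerance $\delta_{\mathrm{mult}}$; both arguments of $\Upsilon$ are uniformly bounded in $\widetilde\varepsilon$ by $\diam(D)^\alpha + 1$ and by $1+\norm{f}_{L^\infty(D)}$ (thanks to Remark \ref{rem:Rf}), so the multiplication error is well controlled. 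The resulting candidate is exactly the $\Psi_{2,\widetilde{\varepsilon}}$ in the statement.

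A triangle inequality then splits the total $L^q(D)$-error into five pieces (Monte Carlo, $f$, $\dist$, $(\cdot)^\alpha$, and $\Upsilon$). I would choose $M_1, M_2 \sim \widetilde\varepsilon^{-s/(s-1)}$ to kill the Monte Carlo contribution, and then pick $\delta_f, \delta_{\dist}, \delta_\alpha, \delta_{\mathrm{mult}}$ as appropriate negative powers of $\widetilde\varepsilon$ so that each remaining piece is at most $\widetilde\varepsilon/5$, exactly as in Step 11 of the proof of Proposition \ref{Prop:homo}. The DNN quantification is parallel to Section \ref{Sect:6p3}: using Lemmas \ref{lem:DNN_id}, \ref{lem:DNN_comp}, \ref{lem:DNN_sum}, \ref{lem:DNN_para}, \ref{lem:DNN_mat} and \ref{lem:DNN_mult}, I would assemble $\Phi_r^{i,n} = \Phi_{\dist}\circ\Phi_{i,n-1}$, then $\Phi_f^{i,n}$ by parallelizing $M_2$ compositions of $\Phi_f$ with the affine maps $\Phi_{i,n} + v_{i,j,n}\Phi_r^{i,n}$ and summing via Lemma \ref{lem:DNN_sum}; compose $\Upsilon$ with $(\mathcal{R}(\Phi_\alpha)\circ\mathcal{R}(\Phi_r^{i,n}), \mathcal{R}(\Phi_f^{i,n}))$ via Lemma \ref{lem:DNN_para}; and finally sum over $i, n$ after padding with identity DNNs (Lemma \ref{lem:DNN_id}) to equalize depths. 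Accumulating the contributions and substituting Assumptions \ref{Sup:f}, \ref{Sup:D}, Lemma \ref{lem:DNN_mult} together with the parameter choices yields the stated bound $\widetilde{B}|D|^{\frac{1}{q}(1+2a+\frac{2s}{s-1}(1+a))} d^b \widetilde\varepsilon^{-a-\frac{2s}{s-1}(1+a)}$.

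The main obstacle will be the first step: controlling the two-level Monte Carlo error in $L^q$ when the outer sum has a random, only a.s.\ finite, number $N$ of terms that is coupled to the coefficients $r_n^\alpha$. The clean separation of these moments depends crucially on Lemma \ref{lem:aux_f} together with the geometric tail of $N$ from Theorem \ref{teo:N}, and requires more care than the corresponding homogeneous estimate. A secondary but genuine difficulty is the use of $\Upsilon$: since Lemma \ref{lem:DNN_mult} only approximates multiplication on a bounded box $[-\kappa,\kappa]^2$, the a priori uniform bounds on $\mathcal{R}(\Phi_f^{i,n})$ and $\mathcal{R}(\Phi_\alpha)\circ\mathcal{R}(\Phi_r^{i,n})$ must be established before $\delta_{\mathrm{mult}}$ can be chosen, and this is essentially the point where the new hypothesis on $\Phi_\alpha$ (absent in \cite{Grohs}) is used.
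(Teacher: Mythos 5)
Your proposal mirrors the paper's own proof in all essential respects: you set up the same two‑level Monte Carlo operator (outer over WoS trajectories, inner over $\mu$‑samples), control the Monte Carlo errors via Corollary~\ref{cor:MCq} and Lemma~\ref{lem:aux_f}, replace the ingredients successively by their DNN surrogates using Assumptions~\ref{Sup:f} and \ref{Sup:D}, combine $(r_n^i)^\alpha$ with the inner $f$-average through $\Upsilon$ of Lemma~\ref{lem:DNN_mult}, and make the same parameter choices $M\sim\widetilde\varepsilon^{-s/(s-1)}$, $\delta_f,\delta_\dist,\delta_\alpha,\delta_\Upsilon$ to split the error into $\widetilde\varepsilon/5$ pieces, before quantifying the resulting network with the lemmas of Section~\ref{Sect:3}. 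The only genuine deviation — first applying Corollary~\ref{cor:MCq} with the true $f$ and only then inserting $\mathcal{R}(\Phi_f)$, rather than the paper's reverse order — is a harmless reordering by triangle inequality, and the paper's chosen $\kappa$ in the $\Upsilon$-step is in fact $M$-dependent rather than literally $\diam(D)^\alpha+1$ as you state, but since $\mathcal{P}(\Upsilon)$ depends only logarithmically on $\kappa$ this does not affect the argument.
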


\subsection{Proof of Proposition \ref{Prop:6p1}: existence} As in the proof of Proposition \ref{Prop:homo}, this proof will be divided in several steps. Let $s,p \hbox{ and } q$ as in \eqref{condiciones_f}.
	
	\medskip 
	
	\noindent
	{\bf Step 1.}
	Let $(\rho_n)_{n \in \N}$ the WoS process starting at $x \in D$. Recall that for all $n=1,...,N$ the process $X_{\mathcal{I}(n)}$ depends of the point $x \in D$ and $n$ copies of $X_{\sigma_{B(0,1)}}$, namely
	\begin{equation}\label{eq:X_Inf}
	X_{\mathcal{I}(n)} = X_{\mathcal{I}(n)}(x,Y_1,...,Y_n),
	\end{equation}
	where $Y_{k}$, $k=1,...,n$ are i.i.d. copies of $X_{\sigma_{B(0,1)}}$ under $\PP_0$. Let $M_1 \in \N$. Consider $M_1$ copies of $X_{\mathcal{I}(n)}$ starting at $x \in D$, as described in \eqref{eq:X_Inf}. We denote such copies as
	\[
	X^i_{\mathcal{I}(n)} = X_{\mathcal{I}(n)}^i(x,Y_{i,1},...,Y_{i,n}),
	\]
	where $Y_{i,k}$, $i=1,...,M_1$, $n=1...,N_i$, $k=1,...,n$ are i.i.d. copies of $X_{\sigma_{B(0,1)}}$ under $\PP_0$, and each $N_i$ is an i.i.d. copy of $N$. Recall that $N_i$ not necessarily be the same (as a random variable).
	\medskip
	
	Let $M_2 \in \N$. for all $n=1,...,N$ let $(v_{j,n})_{j=1}^{M_2}$ be $M_2$ copies of a random variable $v$ with distribution $\mu$ over $B(0,1)$. For all $n=1,...,N$ and $\chi \in L^2(B(0,1),\mu)$ define the Monte Carlo operator
	\begin{equation}\label{eq:EM2}
		E_{M_2}^{n}(\chi(\cdot)) = \frac{1}{M_2} \sum_{j=1}^{M_2} \chi(v_{j,n}),
	\end{equation}
	and we will refer to $E_{M_2}$ when the copies of $v$ in \ref{eq:EM2} do not depend on $n$. Additionally define the operator
	\begin{equation}\label{eq:EM1}
		E_{M_1}(x) = \frac{1}{M_1} \sum_{i=1}^{M_1} \sum_{n=1}^{N_i} (r^i_{n})^{\alpha} \kappa_{d,\alpha} E_{M_2} \left(\left(\mathcal{R}(\Phi_f)\right)\left(X_{\mathcal{I}(n-1)}^i + r_n^i \cdot\right)\right),
	\end{equation}
	where
	\begin{equation}\label{eq:rin}
	r_{i,n} = \dist\left(X_{\mathcal{I}(n-1)}^i,\partial D\right),
	\end{equation}
	and $\mathcal{R}(\Phi_f)$ denotes the realization as a Lipschitz continuous funtion of the DNN $\Phi_f \in \textbf{N}$ that approximates $f$ in Assumption \eqref{Sup:f}. Note that $E_{M_1}$ not necessarily be a DNN.
	\medskip
	
	We want to establish suitable bounds of the difference between \eqref{eq:7.1} and the operator $E_{M_1}(x)$. For this, in the next step we work for all $n=1,...,N$ with the term
	\[
	E_{M_2}^{n}\left(\left(\mathcal{R}(\Phi_f)\right)\left(X_{\mathcal{I}(n-1)} + r_n \cdot\right)\right).
	\]
 	
	\medskip
	\noindent 
	{\bf Step 2.}
	Notice by Remark \ref{rem:Rf} that for all $n=1,...,N$ that
	\begin{equation}\label{eq:Rfnorm}
		\E^{(\mu)}\left(\left|\left(\mathcal{R}(\Phi_f)\right)\left(X_{\mathcal{I}(n-1)}+r_n\cdot\right)\right|\right) \leq \delta_f + \norm{f}_{L^{\infty}(D)}.
	\end{equation}
	Then for all $j=1,...,M_2$, $\left(\mathcal{R}(\Phi_f)\right)\left(X_{\mathcal{I}(n-1)}+r_n v_{j,n}\right) \in L^1(B(0,1),\mu)$. For $s$ as in \eqref{condiciones_f} it follows from Corollary \ref{cor:MCq} that for all $q$ as in \eqref{condiciones_f}
	\[
	\begin{aligned}
		&\norm{\E^{(\mu)} \left[\left(\mathcal{R}(\Phi_f)\right)\left(X_{\mathcal{I}(n-1)} + r_n \cdot\right)\right]-E_{M_2} \left(\left(\mathcal{R}(\Phi_f)\right)\left(X_{\mathcal{I}(n-1)} + r_n \cdot\right)\right)}_{L^{q}(B(0,1),\mu)}\\
		&\qquad \leq \frac{2\Theta_{q,s}}{M_2^{1-\frac{1}{s}}} \E^{(\mu)} \left[\left(\mathcal{R}(\Phi_f)\right)\left(X_{\mathcal{I}(n-1)}+r_n\cdot\right)^q\right]^{\frac{1}{q}}.
	\end{aligned}
	\]
	From Remark \ref{rem:Rf} it follows that
	\[
	\E^{(\mu)}\left(\left|\left(\mathcal{R}(\Phi_f)\right)\left(X_{\mathcal{I}(n-1)}+r_n\cdot\right)\right|^q\right)^{\frac 1q} \leq \delta_f + \norm{f}_{L^{\infty}(D)}.
	\]
	Therefore
	\begin{align*}
	& \norm{\E^{(\mu)} \left[\left(\mathcal{R}(\Phi_f)\right)\left(X_{\mathcal{I}(n-1)} + r_n \cdot\right)\right]-E_{M_2} \left(\left(\mathcal{R}(\Phi_f)\right)\left(X_{\mathcal{I}(n-1)} + r_n \cdot\right)\right)}_{L^{q}(B(0,1),\mu)}\\
	& \qquad \leq \frac{2\Theta_{q,s}\left(\delta_f + \norm{f}_{L^{\infty}(D)}\right)}{M_2^{1-\frac{1}{s}}}.
	\end{align*}
	Then for any $n=1,...,N$ there exists $v_{j,n}$, $j=1,...,M_2$, i.i.d random variables with distribution $\mu$ such that
	\[
	\begin{aligned}
	&\left|\E^{(\mu)} \left[\left(\mathcal{R}(\Phi_f)\right)\left(X_{\mathcal{I}(n-1)} + r_n \cdot\right)\right]-\frac{1}{M_2} \sum_{j=1}^{M_2} \left(\mathcal{R}(\Phi_f)\right)\left(X_{\mathcal{I}(n-1)} + r_n v_{j,n}\right)\right|\\
	&\qquad\leq \frac{2\Theta_{q,s}\left(\delta_f + \norm{f}_{L^{\infty}(D)}\right)}{M_2^{1-\frac{1}{s}}}.
	\end{aligned}
	\]
	We redefine $E_{M_2}^{n}$ with the random variables $v_{j,n}$ found for all $n=1,...,N$.
		
	\medskip
	In the two next steps we control the difference between \eqref{eq:7.1} and $E_{M_1}$ with the intermediate term
	\[
	\E_x \left[\sum_{n=1}^{N}r_n^{\alpha}\kappa_{d,\alpha} E^n_{M_2}\left(\mathcal{R}(\Phi_f)\right)\left(X_{\mathcal{I}(n-1)}+r_n \cdot\right)\right],
	\]
	\medskip
	\noindent
	{\bf Step 3.} Define
	\[
\begin{aligned}
	J_3 &= \Bigg\|\E_x\left[\sum_{n=1}^{N}r^{\alpha}_{n} \kappa_{d,\alpha} \E^{(\mu)}\left[\left(\mathcal{R}(\Phi_f)\right)\left(X_{\mathcal{I}(n-1)}+r_n\cdot\right)\right] \right] \\
	& \qquad -\E_x\left[\sum_{n=1}^{N}r^{\alpha}_{n} \kappa_{d,\alpha} E_{M_2}^{n}\left(\left(\mathcal{R}(\Phi_f)\right) \left(X_{\mathcal{I}(n-1)}+r_n \cdot\right)\right) \right]\Bigg\|_{L^q(\Omega,\PP_x)}.
\end{aligned}
\]
	From Step 2 we have
	\[
	J_3 \leq \frac{2\Theta_{q,s}}{M_2^{1-\frac 1s}} \left(\delta_f + \norm{f}_{L^{\infty}(D)} \right) \norm{\E_x \left[\sum_{n=1}^{N} r_n^{\alpha}\kappa_{d,\alpha}\right]}_{L^{q}(\Omega,\PP_x)}.
	\]
	Using Lemma \ref{lem:aux_f} it follows that
	\[
	J_3 \leq \frac{2\Theta_{q,s}}{M_2^{1-\frac 1s}} \left(\delta_f + \norm{f}_{L^{\infty}(D)} \right) \E_x[\sigma_D].
	\]
	
	\medskip
	
	\noindent
	{\bf Step 4.} Define 
	\[
	J_4 = \norm{\E_x \left[\sum_{n=1}^{N}r_n^{\alpha}\kappa_{d,\alpha} E^n_{M_2}\left(\mathcal{R}(\Phi_f)\right)\left(X_{\mathcal{I}(n-1)}+r_n \cdot\right)\right] - E_{M_1}(x)}_{L^{q}(\Omega,\PP_x)}.
	\]
	Recall Remark \ref{rem:Rf}. Then
	\[
	\E_x \left[\left|\sum_{n=1}^{N}r_n^{\alpha}\kappa_{d,\alpha} E^n_{M_2}\left(\mathcal{R}(\Phi_f)\right)\left(X_{\mathcal{I}(n-1)}+r_n \cdot\right)\right|\right] \leq (\delta_f + \norm{f}_{L^{\infty}(D)})\E_x[\sigma_D]<\infty.
	\]
	This implies that
	\[
	\sum_{n=1}^{N}r_{i,n}^{\alpha}\kappa_{d,\alpha} E^n_{M_2}\left(\mathcal{R}(\Phi_f)\right)\left(X^i_{\mathcal{I}(n-1)}+r_{i,n} \cdot\right) \in L^1(\Omega,\PP_x).
	\]
	Then using Corollary \ref{cor:MCq} we have for $s$ as in \eqref{condiciones_f} it holds for all $q$ as in \eqref{condiciones_f} that
	\begin{align}
		J_4 &\leq \frac{2\Theta_{q,s}}{M_1^{1-\frac{1}{s}}} \E_x \left[\left(\sum_{n=1}^{N} r_n^{\alpha}\kappa_{d,\alpha} E^n_{M_2}\left(\left(\mathcal{R}(\phi_f)\right)(X_{\mathcal{I}(n-1)}+r_n\cdot)\right)\right)^q\right]^{\frac{1}{q}}.
	\end{align}
	Remark \ref{rem:Rf} implies that
	\[
	J_4 \leq \frac{2\Theta_{q,s}}{M_1^{1-\frac{1}{s}}}\left(\delta_f + \norm{f}_{L^{\infty}(D)}\right) \E_x \left[\left(\sum_{n=1}^{N}r_{n}^{\alpha} \kappa_{d,\alpha}\right)^q\right]^{\frac{1}{q}}
	\]
	By Lemma \ref{lem:aux_f} and Jensen inequality with $(\cdot)^{\frac{2}{q}}$, $q<2,$ we have
	\[
	\E_x\left[\left(\sum_{n=1}^{N}r_n^{\alpha}\kappa_{d,\alpha}\right)^q\right]^{\frac{1}{q}} \leq \E_x\left[\left(\sum_{n=1}^{N}r_n^{\alpha}\kappa_{d,\alpha}\right)^2\right]^{\frac{1}{2}} \leq \E_x [\sigma_D^2]^{\frac{1}{2}}.
	\]
	Therefore
	\begin{align}
		J_4 \leq \frac{2\Theta_{q,s}}{M_1^{1-\frac{1}{s}}}\left(\delta_f + \norm{f}_{L^{\infty}(D)}\right) \E_x[\sigma_D^2]^{\frac{1}{2}}.
	\end{align}
	
	\medskip
	
	\noindent
	{\bf Step 5.}
	With the bounds obtained in Steps 3 and 4, we have by Minkowski inequality that 
	\begin{equation}\label{eq:step5f}
	\begin{aligned}
		&\norm{\E_x\left[\sum_{n=1}^{N}r^{\alpha}_{n} \kappa_{d,\alpha} \E^{(\mu)}\left[\left(\mathcal{R}(\Phi_f)\right)\left(X_{\mathcal{I}(n-1)}+r_n\cdot\right)\right] \right]-E_{M_1}(x)}_{L^q(\Omega,\PP_x)} \leq J_3 + J_4\\
		&\hspace{3.5cm}\leq 2\Theta_{q,s}\left(\delta_f + \norm{f}_{L^{\infty}(D)}\right)\left(\frac{\E_x[\sigma_D]}{M_2^{1-\frac 1s}}+\frac{\E_x[|\sigma_D|^2]^{\frac{1}{2}}}{M_1^{1-\frac 1s}}\right).
	\end{aligned}
	\end{equation}
	Fubini and \eqref{eq:step5f} implies that
	\begin{equation}\label{eq:cotafP4}
		\begin{aligned}
			&\E_x \left[\int_D \left|\E_x\left[\sum_{n=1}^{N}r_n^{\alpha} \kappa_{d,\alpha} \E^{(\mu)}\left[\left(\mathcal{R}(\Phi_f)\right)\left(X_{\mathcal{I}(n-1)}+r_n\cdot\right)\right]\right]-E_{M_1}(x)\right|^qdx\right] \\
			&\qquad \leq 2^q|D|\Theta_{q,s}^{q} \left(\delta_f + \norm{f}_{L^{\infty}(D)}\right)^q \left(\frac{\E_x[\sigma_D]}{M_2^{1-\frac 1s}} + \frac{\E_x[\sigma_D^2]^\frac{1}{2}}{M_1^{1-\frac 1s}}\right)^{q}
		\end{aligned}
	\end{equation}
	On the other hand side, from hypothesis \eqref{H5} and Lemma \ref{lem:aux_f} one has
	\[
	\begin{aligned}
	&\E_x \left[\int_D\left|\E_x\left[\sum_{n=1}^{N}r^{\alpha}_{n} \kappa_{d,\alpha} \E^{(\mu)}\left[\left(\mathcal{R}(\phi_f)\right)\left(X_{\mathcal{I}(n-1)}+r_n\cdot\right)\right] \right] \right.\right.\\
	&\qquad\left.\left.-\E_x\left[\sum_{n=1}^{N}r^{\alpha}_{n} \kappa_{d,\alpha} \E^{(\mu)}\left[f\left(X_{\mathcal{I}(n-1)}+r_n\cdot\right)\right] \right]\right|^q dx\right] \leq \delta_f^q|D|\E_x[\sigma_D]^q.
	\end{aligned}
	\]
	Therefore, using that $(\cdot)^q$ is a convex function and \eqref{eq:cotafP4} it follows that
	\begin{equation}\label{eq:cotafP42}
	\begin{aligned}
		&\E_x \left[\int_D \left|\E_x\left[\sum_{n=1}^{N}r_n^{\alpha} \kappa_{d,\alpha} \E^{(\mu)}\left[f\left(X_{\mathcal{I}(n-1)}+r_n\cdot\right)\right]\right]-E_{M_1}(x)\right|^qdx\right] \\
		&\quad \leq 2^{q-1}\delta_f^q|D|\E_x[\sigma_D]^q+2^{q-1}2^q|D|\Theta_{q,s}^{q} \left(\delta_f + \norm{f}_{L^{\infty}(D)}\right)^q \left(\frac{\E_x[\sigma_D]}{M_2^{1-\frac 1s}} + \frac{\E_x[\sigma_D^2]^\frac{1}{2}}{M_1^{1-\frac 1s}}\right)^{q}\\
		&\quad \leq 2\delta_f^q|D|\E_x[\sigma_D]^q+2^{q+1}|D|\Theta_{q,s}^{q} \left(\delta_f + \norm{f}_{L^{\infty}(D)}\right)^q \left(\frac{\E_x[\sigma_D]}{M_2^{1-\frac 1s}} + \frac{\E_x[\sigma_D^2]^\frac{1}{2}}{M_1^{1-\frac 1s}}\right)^{q},
	\end{aligned}
	\end{equation}
	where we use that $2^{q-1}<2$ since $q<2$.
	\medskip
	
	\noindent
	{\bf Step 6.}
	In order to bound the following expectation
	\[\E_x \left[\left|\E_x[N] - \frac{1}{M_1} \sum_{i=1}^{M_1} N_i\right|^q\right],
	\]
	we use Corollary \ref{cor:MCq}. Notice by Theorem \ref{teo:N} that for all $x \in D$ there exists a geometric random variable $\Gamma$ with parameter $\widetilde{p} = \widetilde{p}(\alpha,d) > 0$ such that
	\[
	\E_x\left[|N|\right] \leq \E_x\left[\Gamma\right] = \frac{1}{\widetilde{p}} < \infty,
	\]
	and then for all $i \in \{1,...,M_1\}$, $N_i \in L^1(\PP_x,|\cdot|)$. For $s$ as in \eqref{condiciones_f}, Corollary \ref{cor:MCq} implies for all $q$ as in \eqref{condiciones_f} that
	\begin{equation}\label{eq:cotafP5_1}
		\E_x\left[\left|\E_x[N]-\frac{1}{M_1} \sum_{i=1}^{M_1} N_i\right|^q\right]\leq \left( \frac{2\Theta_{q,s}}{M_1^{1-\frac{1}{s}}}\right)^q \E_x \left[|N|^{q}\right] \leq \left( \frac{2\Theta_{q,s}}{M_1^{1-\frac{1}{s}}}\right)^q \E_x \left[|N|^{2}\right],
	\end{equation}
	where we used that $q<2$ and then $\E_x\left[|\cdot|^q\right]\leq \E_x\left[|\cdot|^2\right]$. Recall that
	\begin{equation}\label{eq:cotafN^2}
		\E_x\left[|N|^2\right] \leq \E_x\left[\Gamma^2\right] = \frac{2-\widetilde{p}}{\widetilde{p}^2} < \infty,
	\end{equation}
	and therefore, it holds from \eqref{eq:cotafP5_1} and \eqref{eq:cotafN^2} that
	\begin{equation}\label{eq:cotafP5_2}
		\E_x\left[\left|\E_x[N]-\frac{1}{M_1} \sum_{i=1}^{M_1} N_i\right|^q\right] \leq \left( \frac{2\Theta_{q,s}}{M_1^{1-\frac{1}{s}}}\right)^q \frac{2-\widetilde{p}}{\widetilde{p}^{2}}.
	\end{equation}
	\medskip
	\noindent
	{\bf Step 7.} We want to estimate
	\[
	\E_x \left[\left|\E_x \left[\sum_{n=1}^{N} |Y_{n}|\right] - \frac{1}{M_1} \sum_{i=1}^{M_1} \sum_{n=1}^{N_i} |Y_{i,n}| \right|^{q} \right]
	\]
	As in the previous step, we use the Corollary \ref{cor:MCq}. First of all, it follows from the independence of $\left(Y_{n}\right)_{n=1}^{k}$ and $N$ for fixed $k \in \N$ ($Y_{n}$ and $X$ are independent), and law of total expectation that
	\[
	\begin{aligned}
		\E_x \left[\left|\sum_{n=1}^{N} |Y_{n}|\right|\right] &= \sum_{k \geq 1} \E_0 \left[\left.\left|\sum_{n=1}^{N}|Y_{n}|\right|\right|N=k\right] \PP_x (N = k)\\
		&= \sum_{k \geq 1} \E_0 \left[\left|\sum_{n=1}^{k}|Y_{n}|\right|\right] \PP_x (N = k).
	\end{aligned}
	\]
	Recall that $(Y_{n})_{n=1}^{k}$ are i.i.d. with the same distribution as $X_{\sigma_{B(0,1)}}$. Triangle inequality ensures that
	\[
	\begin{aligned}
		\E_x \left[\left|\sum_{n=1}^{N} |Y_{n}|\right|\right] &\leq \sum_{k\geq1} \sum_{n=1}^{k} \E_{0}\left[|Y_n|\right] \PP_x(N=k)\\
		&= \E_0\left[\left|X_{\sigma_{B(0,1)}}\right|\right] \sum_{k \geq 1} k \PP_x(N=k)\\
		&= K(\alpha,1) \E_x[N].
	\end{aligned}
	\]
	and then for all $i \in \{1,...,M_1\}$, $\sum_{n=1}^{N} |Y_n| \in L^{1}(\PP_x,|\cdot|)$. Moreover, with similar arguments it holds that
	\begin{equation}\label{eq:cotafP6_1}
		\E_x \left[\left|\sum_{n=1}^{N} |Y_{i,n}|\right|^q\right] \leq K(\alpha,q) \E_x[N^{q}].
	\end{equation}
	For $s$ as in \eqref{condiciones_f}, Corollary \ref{cor:MCq} implies for all $q$ as in \eqref{condiciones_f} that
	\[
	\E_x \left[\left|\E_x \left[\sum_{n=1}^{N} |Y_{n}|\right] - \frac{1}{M_1} \sum_{i=1}^{M_1} \sum_{n=1}^{N_i} |Y_{i,n}| \right|^{q} \right] \leq \left( \frac{2\Theta_{q,s}}{M_1^{1-\frac{1}{s}}}\right)^{q} \E_x \left[\left|\sum_{n=1}^{N}|Y_{n}|\right|^q\right].
	\]
	And therefore it follows from \eqref{eq:cotafN^2} and \eqref{eq:cotafP6_1} that
	
	\begin{equation}\label{eq:cotafP6_2}
		\E_x \left[\left|\E_x \left[\sum_{n=1}^{N} |Y_{n}|\right] - \frac{1}{M_1} \sum_{i=1}^{M_1} \sum_{n=1}^{N_i} |Y_{i,n}| \right|^{q} \right] \leq \left( \frac{2\Theta_{q,s}}{M_1^{1-\frac{1}{s}}}\right)^{q} K(\alpha,q) \frac{2-\widetilde{p}}{\widetilde{p}^2}.
	\end{equation}
	\medskip
	\noindent
	{\bf Step 8.} It follows from \eqref{eq:cotafP42}, \eqref{eq:cotafP5_2} and \eqref{eq:cotafP6_2} that
	\begin{equation}
		\begin{aligned}
			&\E_x \left[\int_D \left|\E_x\left[\sum_{n=1}^{N}r_n^{\alpha} \kappa_{d,\alpha} \E^{(\mu)}\left[\left(\mathcal{R}(\Phi_f)\right)\left(X_{\mathcal{I}(n-1)}+r_n\cdot\right)\right]\right]-E_{M_1}(x)\right|^qdx\right. \\
			&\qquad \left.+ \left|\E_x\left[N\right] - \frac{1}{M_1} \sum_{i=1}^{M_1} N_i\right|^q + \left|\E_x\left[\sum_{n=1}^{N}|Y_n|\right]-\frac{1}{M_1}\sum_{i=1}^{M_1}\sum_{n=1}^{N_i} |Y_{i,n}|\right|^q\right]\\
			&\leq 2\delta_f^q|D|\E_x[\sigma_D]^q + 2^{q+1}|D|\Theta_{q,s}^{q} \left(\delta_f + \norm{f}_{L^{\infty}(D)}\right)^q \left(\frac{\E_x[\sigma_D]}{M_2^{1-\frac 1s}} + \frac{\E_x[\sigma_D^2]^\frac{1}{2}}{M_1^{1-\frac 1s}}\right)^{q} \\
			& \qquad + \left( \frac{2\Theta_{q,s}}{M_1^{1-\frac{1}{s}}}\right)^{q} (1+K(\alpha,q)) \frac{2-\widetilde{p}}{\widetilde{p}^2} =: \hbox{error}_f^q.
		\end{aligned}
	\end{equation}
	Using now that $\E(Z)\leq c<\infty$ we have the following result
	\begin{lem}\label{lem:sacada2}
	This implies that there exists $\overline{N}_i \in \N$, $Y_{i,n}$ i.i.d. copies of $X_{\sigma_{B(0,1)}}$, $v_{i,j,n}$ i.i.d. random variables with law $\mu$, $i=1,...,M_1$, $j=1,...,M_2$, $n=1,...,\overline{N}_i$ such that 
	\begin{equation}
		\begin{aligned}
			&\int_D \left|\E_x\left[\sum_{n=1}^{N}r_n^{\alpha} \kappa_{d,\alpha} \E^{(\mu)}\left[f\left(X_{\mathcal{I}(n-1)}+r_n\cdot\right)\right]\right]-E_{M_1}(x)\right|^qdx \\
			&\qquad + \left|\E_x\left[N\right] - \frac{1}{M_1} \sum_{i=1}^{M_1} \overline{N}_i\right|^q + \left|\E_x\left[\sum_{n=1}^{N}|Y_n|\right]-\frac{1}{M_1}\sum_{i=1}^{M_1}\sum_{n=1}^{\overline{N}_i} |Y_{i,n}|\right|^q\\
			&\qquad\leq \hbox{error}_f^q.
		\end{aligned}
	\end{equation}
	\end{lem}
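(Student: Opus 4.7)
The plan is to deduce this lemma as an immediate consequence of the expectation bound established just above in Step~8, mirroring the argument that yields Lemma~\ref{lem:sacada} in the homogeneous case. That bound can be read as $\E_x[Z] \leq \hbox{error}_f^q$, where $Z$ is the non-negative random variable on $(\Omega,\PP_x)$ given by the sum of three summands: an $L^q$-in-$x$ Monte Carlo error for the source functional, a $q$-th power error for the Monte Carlo approximation of $\E_x[N]$, and a $q$-th power error for the Monte Carlo approximation of $\E_x\bigl[\sum_{n=1}^N |Y_n|\bigr]$.

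Since $Z\geq 0$, I would invoke the elementary mean-value principle: if $\E[Z]\leq c$ for a non-negative random variable, then $\PP(Z\leq c)>0$, and in particular there exists some $\omega\in\Omega$ with $Z(\omega)\leq c$. Applied with $c=\hbox{error}_f^q$, this produces an outcome $\omega^\ast\in\Omega$ satisfying $Z(\omega^\ast)\leq \hbox{error}_f^q$. Setting $\overline{N}_i:=N_i(\omega^\ast)\in\N$ and taking $Y_{i,n}$, $v_{i,j,n}$ to be the values of the corresponding random variables at $\omega^\ast$ produces the concrete copies asserted by the lemma. Because $Z(\omega^\ast)$ is the sum of three non-negative contributions all bounded by the common constant $\hbox{error}_f^q$, each of the three inequalities in the conclusion holds simultaneously.

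The statement is essentially a one-line derandomization once the bound of Step~8 is in place, so there is no serious obstacle at this stage; the substantive work was done in Steps~1--8, which provides the expectation estimate. The only point worth noting is that all the random variables $N_i$, $Y_{i,n}$, $v_{i,j,n}$ are defined jointly on the same probability space $(\Omega,\mathcal{F},\PP_x)$, with the mutual independence built into the Monte Carlo construction \eqref{eq:EM1}--\eqref{eq:EM2}, so that a single outcome $\omega^\ast$ simultaneously fixes admissible realizations for all of them; once this joint structure is acknowledged, the extraction of $\omega^\ast$ is automatic.
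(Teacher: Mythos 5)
Your proposal is correct and matches the paper's own treatment: the paper simply writes ``Using now that $\E(Z)\leq c<\infty$ we have the following result'' and then states the lemma, which is exactly the ``pick an outcome achieving at most the mean'' derandomization you describe. One tiny caveat: the lemma's conclusion is a single inequality bounding the \emph{sum} of the three nonnegative terms by $\hbox{error}_f^q$ (rather than three separate inequalities as your last sentence suggests), but that reading is even weaker than what you prove and the argument goes through unchanged.
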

	Here, $E_{M_1}$ will be redefined from \eqref{eq:EM1} according to the copies found in Lemma \ref{lem:sacada2}, with the Monte Carlo operator $E_{M_2}^{i,n}$ defined from the copies $(v_{i,j,n})_{j=1}^{M_2}$.
	
	\medskip
	\noindent
	{\bf Step 9.} As similar in Step 8 of Proposition 6.1, we can see that for all $i=1,...,M_1$ $n=1,...,\overline{N}_i$ $X^{i}_{\mathcal{I}(n-1)}$ can be approximated by a ReLu DNN $\Phi_{i,n-1} \in \textbf{N}$ which satisfies for all $x \in D$
	\[
	\left|X_{\mathcal{I}(n-1)}^{i}-\left(\mathcal{R}(\Phi_{i,n-1})\right)(x)\right| \leq \delta_{\dist}\left(\sum_{n=1}^{\overline{N}_i}|Y_{i,n}| + 1\right)^{\overline{N}_i}.
	\]
	We can find now a DNN that approximates $r_{i,n}$ in \eqref{eq:rin}. Indeed, define $\Phi_{r}^{i,n} \in \textbf{N}$ as follows:
	\[
	\left(\mathcal{R}(\Phi_r^{i,n})\right)(x) = \left(\mathcal{R}(\Phi_{\dist}) \circ \mathcal{R}(\Phi_{i,n-1})\right)(x),
	\] 
	valid for $x \in D$. In Section \ref{Sect:7p3} we show that $\Phi_{r}^{i,n}$ is in fact a ReLu DNN. For $x \in D$ we have by triangle inequality that
	\[
	\begin{aligned}
		\left|r_{i,n} - \left(\mathcal{R}(\Phi_r^{i,n})\right)(x)\right|&\leq \left|r_{i,n}-\dist\left(\left(\Phi_{i,n-1}\right)(x),\partial D\right)\right|\\
		&\qquad + \left|\dist\left(\left(\Phi_{i,n-1}\right)(x),\partial D\right) - \left(\mathcal{R}(\Phi_r^{i,n})\right)(x)\right|
	\end{aligned}
	\]
	Hypothesis \eqref{HD-1} and the fact that the function $x \mapsto \dist(x,\partial D)$ is 1-Lipschitz implies
	\[
	\begin{aligned}
		\left|r_{i,n} - \left(\mathcal{R}(\Phi_r^{i,n})\right)(x)\right|&\leq \left|X_{\mathcal{I}(n-1)}^{i}-\left(\mathcal{R}(\Phi_{i,n-1})\right)(x)\right| + \delta_{\dist}\\
		&\leq \delta_{\dist}\left(\sum_{n=1}^{\overline{N}_i}|Y_{i,n}| + 1\right)^{\overline{N}_i} + \delta_{\dist}.
	\end{aligned}
	\]
	
	\medskip
	\noindent
	{\bf Step 10.} We will find now a DNN that approximates
	\begin{equation}\label{EM2in}
	E_{M_2}^{i,n}\left( \left(\mathcal{R}(\Phi_f)\right)\left(X_{\mathcal{I}(n-1)}^{i}+r_{i,n}\cdot\right)\right).
	\end{equation}
	Define the DNN $\Phi_f^{i,n} \in \textbf{N}$ as follows: for $x \in D$
	\[
	\left(\mathcal{R}(\Phi_f^{i,n})\right)(x) = \frac{1}{M_2} \sum_{j=1}^{M_2} \left(\mathcal{R}(\Phi_f) \circ \left(\mathcal{R}(\Phi_{i,n-1}) + v_{i,j,n} \mathcal{R}(\Phi_{r}^{i,n})\right)\right)(x).
	\]
	In Section \ref{Sect:7p3} we will prove that $\Phi_{f}^{i,n}$ is a ReLu DNN. We now use the assumption that $\mathcal{R}(\Phi_f)$ is a $\widetilde{L}_{f}$-Lipschitz function to obtain
	\[
	\begin{aligned}
		&\left|E_{M_2}^{i,n}\left( \left(\mathcal{R}(\Phi_f)\right)\left(X_{\mathcal{I}(n-1)}^{i}+r_{i,n}\cdot\right)\right)-\left(\mathcal{R}(\Phi_f^{i,n})\right)(x)\right| \\
		&\qquad \leq \frac{\widetilde{L}_f}{M_2} \sum_{j=1}^{M_2} \left(\left|X_{\mathcal{I}(n-1)}^{i} - \left(\mathcal{R}(\Phi_{i,n-1})\right)(x)\right| + \left|v_{i,j,n}\right| \left|r_{i,n} -\left(\mathcal{R}(\Phi_r^{i,n})\right)(x)\right|\right).
	\end{aligned}
	\] 
	Notice that for all $i=1,...,M_1$, $j=1,...,M_2$, $n=1,...,\overline{N}_i$ one has $|v_{i,j,n}|\leq 1$ ($v_{i,j,n}$ is a random variable on $B(0,1)$). Therefore, it follows from Step 9 that
	\[
	\begin{aligned}
	& \left|E_{M_2}^{i,n}\left( \left(\mathcal{R}(\Phi_f)\right)\left(X_{\mathcal{I}(n-1)}^{i}+r_{i,n}\cdot\right)\right)-\left(\mathcal{R}(\Phi_f^{i,n})\right)(x)\right| \\
	&~{} \qquad  \leq \widetilde{L}_f\delta_{\dist} \left(2\left(\sum_{n=1}^{\overline{N}_i} |Y_{i,n}| + 1\right)^{\overline{N}_i}+1\right).
	\end{aligned} 
	\]
	\medskip
	\noindent 
	{\bf Step 11.} We want to approximate the multiplication between $r_{i,n}^{\alpha}$ and \eqref{EM2in}. For all $i=1,...,M_1$, $n=1,...,\overline{N}_i$ define the DNN $\Upsilon_{i,n} \in \textbf{N}$ as
	\[
	\left(\mathcal{R}(\Upsilon_{i,n})\right)(x) = \left(\mathcal{R}(\Upsilon)\right)\left(\left(\mathcal{R}(\Phi_{\alpha})\circ \mathcal{R}(\Phi_{r}^{i,n})\right)(x),\left(\mathcal{R}(\Phi_f^{i,n})\right)(x)\right),
	\]
	valid for $x \in D$. In Section \ref{Sect:7p3} we show that $\Upsilon_{i,n}$ is a ReLu DNN. Note by triangle inequality that for all $x \in D$
	\[
	\begin{aligned}
		&\left|r_{i,n}^{\alpha}E_{M_2}^{i,n}\left( \left(\mathcal{R}(\Phi_f)\right)\left(X_{\mathcal{I}(n-1)}^{i}+r_{i,n}\cdot\right)\right) - \left(\mathcal{R}(\Upsilon_{i,n})\right)(x)\right| \\
		&\qquad\leq \left|r_{i,n}^{\alpha}\Big(E_{M_2}^{i,n}\left( \left(\mathcal{R}(\Phi_f)\right)\left(X_{\mathcal{I}(n-1)}^{i}+r_{i,n}\cdot\right)\right)-\left(\mathcal{R}(\Phi_f^{i,n})\right)(x)\Big)\right|\\
		&\qquad \quad+ \left|\left(\mathcal{R}(\Phi_f^{i,n})\right)(x) \left(r_{i,n}^{\alpha} - \left(\mathcal{R}(\Phi_{\alpha})\circ\mathcal{R}(\Phi_r^{i,n})\right)(x)\right)\right|\\
		&\qquad \quad+\left|\left(\mathcal{R}(\Phi_f^{i,n})\right)(x)\left(\mathcal{R}(\Phi_{\alpha})\circ\mathcal{R}(\Phi_r^{i,n})\right)(x) - \left(\mathcal{R}(\Upsilon_{i,n})\right)(x)\right|
	\end{aligned}
	\]
	For all $i=1,...,M_1$, $n=1,...,M_1$ one has $r_n^{i} < \diam(D)$. From Step 9, the first term can be bounded as
	\[
	\begin{aligned}
		&\left|r_{i,n}^{\alpha}\left(E_{M_2}^{i,n}\left( \left(\mathcal{R}(\Phi_f)\right)\left(X_{\mathcal{I}(n-1)}^{i}+r_{i,n}\cdot\right)\right)-\left(\mathcal{R}(\Phi_f^{i,n})\right)(x)\right)\right| \\
		&\qquad\leq \diam(D)^{\alpha}\widetilde{L}_f\delta_{\dist} \left(2\left(\sum_{n=1}^{\overline{N}_i} |Y_{i,n}| + 1\right)^{\overline{N}_i}+1\right).
	\end{aligned}
	\]
	For the second term of the inequality, note that
	\[
	\left|\left(\mathcal{R}(\Phi_f^{i,n})\right)(x)\right| \leq \norm{\mathcal{R}(\Phi_f)}_{L^{\infty}(D)} \leq \delta_f + \norm{f}_{L^{\infty}(D)}.
	\]
	Also, by triangle inequality
	\[
	\begin{aligned}
	\left|r_{i,n}^{\alpha} - \left(\mathcal{R}(\Phi_{\alpha})\circ\mathcal{R}(\Phi_r^{i,n})\right)(x)	\right| &\leq \left|r_{i,n}^{\alpha} - \left(\mathcal{R}(\Phi_{\alpha})\right)(r_{i,n})\right| \\
	&\quad+ \left|\left(\mathcal{R}(\Phi_{\alpha})\right)(r_{i,n})- \left(\mathcal{R}(\Phi_{\alpha})\circ\mathcal{R}(\Phi_r^{i,n})\right)(x)\right|.
	\end{aligned}
	\]
	From the Hypothesis \ref{HD-3} and the fact that $\mathcal{R}(\Phi_{\alpha})$ is $L_{\alpha}$-Lipschitz one has
	\[
	\left|r_{i,n}^{\alpha} - \left(\mathcal{R}(\Phi_{\alpha})\circ\mathcal{R}(\Phi_r^{i,n})\right)(x)	\right| \leq \delta_{\alpha} + L_{\alpha} \left|r_{i,n} - \left(\mathcal{R}(\Phi_r^{i,n})\right)(x)\right|
	\]
	And by Step 9 it follows that
	\[
	\begin{aligned}
		&\left|\left(\mathcal{R}(\Phi_f^{i,n})\right)(x) \left(r_{i,n}^{\alpha} - \left(\mathcal{R}(\Phi_{\alpha})\circ\mathcal{R}(\Phi_r^{i,n})\right)(x)\right)\right| \\
		&\quad\leq \left(\delta_f + \norm{f}_{L^{\infty}(D)}\right)\left(\delta_{\alpha} + L_{\alpha} \delta_{\dist} \left(\left(\sum_{n=1}^{\overline{N}_i}|Y_{i,n}|+1\right)^{\overline{N}_i}+1\right)\right).
	\end{aligned}
	\]
	Finally, by Lemma \ref{lem:DNN_mult} for all $\delta_{\Upsilon} \in \left(0,\frac{1}{2}\right)$ the third term can be bounded by
	\[
	\left|\left(\mathcal{R}(\Phi_f^{i,n})\right)(x)\left(\mathcal{R}(\Phi_{\alpha})\circ\mathcal{R}(\Phi_r^{i,n})\right)(x) - \left(\mathcal{R}(\Upsilon_{i,n})\right)(x)\right| \leq \delta_{\Upsilon}.
	\]
	with $\kappa$ from the Lemma \ref{lem:DNN_mult} equal to
	\[
	\kappa = \max\left\{1+\norm{f}_{L^{\infty}(D)},1+L_{\alpha}\left(\left(\sum_{i=1}^{\overline{N}_i}|Y_{i,n}|+1\right)^{\overline{N}_i}+1\right)+\diam(D)^{\alpha}\right\}
	\]
	Therefore
	\begin{equation}\label{eq:step11f}
	\begin{aligned}
		&\left|r_{i,n}^{\alpha}E_{M_2}^{i,n}\left( \left(\mathcal{R}(\Phi_f)\right)\left(X_{\mathcal{I}(n-1)}^{i}+r_{i,n}\cdot\right)\right) - \left(\mathcal{R}(\Upsilon_{i,n})\right)(x)\right| \\
		&\quad\leq \diam(D)^{\alpha}L_f\delta_{\dist} \left(2\left(\sum_{n=1}^{\overline{N}_i} |Y_{i,n}| + 1\right)^{\overline{N}_i}+1\right)\\
		&\quad \quad+\left(\delta_f + \norm{f}_{L^{\infty}(D)}\right)\left(\delta_{\alpha} + L_{\alpha} \delta_{\dist} \left(\left(\sum_{n=1}^{\overline{N}_i}|Y_{i,n}|+1\right)^{\overline{N}_i}+1\right)\right) + \delta_{\Upsilon}.\\
		&\quad\leq \delta_{\Upsilon} + \delta_{\alpha}\left(\delta_f + \norm{f}_{L^{\infty}(D)}\right) \\
		&\quad \quad+ \delta_{\dist} \left(\diam(D)^{\alpha}L_f + L_{\alpha}\left(\delta_f + \norm{f}_{L^{\infty}(D)}\right)\right) \left(2\left(\sum_{n=1}^{\overline{N}_i} |Y_{i,n}| + 1\right)^{\overline{N}_i}+1\right).
	\end{aligned}
	\end{equation}

	\noindent
	{\bf Step 12.}
	For $\widetilde{\varepsilon} \in (0,1)$ define the DNN $\Psi_{2,\widetilde{\varepsilon}} \in \textbf{N}$ as follows: for any $x \in D$
	\[
	\left(\mathcal{R}(\Psi_{2,\widetilde{\varepsilon}})\right)(x) = \frac{1}{M_1} \sum_{i=1}^{M_1} \sum_{n=1}^{\overline{N}_i} \kappa_{d,\alpha} \left(\mathcal{R}\left(\Upsilon_{i,n}\right)\right)(x).
	\]
	This is the requested DNN. See Section \ref{Sect:7p3} for the proof that $\Psi_{2,\widetilde{\varepsilon}}$ is indeed a ReLu DNN. By triangle inequality and \eqref{eq:step11f} we have
	\[
	\begin{aligned}
		&\left|E_{M_1}(x) - \left(\mathcal{R}(\Psi_{2,\widetilde{\varepsilon}})\right)(x)\right| \\
		&\quad\leq \frac{\kappa_{d,\alpha}}{M_1}\sum_{i=1}^{M_1} \sum_{n=1}^{\overline{N}_i} \left|r_{i,n}^{\alpha}E_{M_2}^{i,n}\left( \left(\mathcal{R}(\Phi_f)\right)\left(X_{\mathcal{I}(n-1)}^{i}+r_{i,n}\cdot\right)\right) - \left(\mathcal{R}(\Upsilon_{i,n})\right)(x)\right|.\\
		&\quad \leq \frac{\kappa_{d,\alpha}}{M_1} \left(\delta_{\Upsilon} + \delta_{\alpha} \left(\delta_f + \norm{f}_{L^{\infty}(D)}\right)\right) \sum_{i=1}^{M_1} \overline{N}_i \\
		&\quad \quad + \frac{\kappa_{d,\alpha}}{M_1} \delta_{\dist} \left(\diam(D)^{\alpha}L_f + L_{\alpha}\left(\delta_f + \norm{f}_{L^{\infty}(D)}\right)\right) \sum_{i=1}^{M_1} \overline{N}_i\left(2\left(\sum_{n=1}^{\overline{N}_i} |Y_{i,n}| + 1\right)^{\overline{N}_i}+1\right).
	\end{aligned}
	\]
	Therefore
	\[
	\begin{aligned}
		&\left|E_{M_1}(x) - \left(\mathcal{R}(\Psi_{2,\widetilde{\varepsilon}})\right)(x)\right| \\
		&\quad \leq \frac{\kappa_{d,\alpha}}{M_1} \left(\delta_{\Upsilon} + \delta_{\alpha} \left(\delta_f + \norm{f}_{L^{\infty}(D)}\right)\right) \sum_{i=1}^{M_1} \overline{N}_i \\
		&\quad \quad + \kappa_{d,\alpha} \delta_{\dist} \left(\diam(D)^{\alpha}L_f + L_{\alpha}\left(\delta_f + \norm{f}_{L^{\infty}(D)}\right)\right) \left(\sum_{i=1}^{M_1} \overline{N}_i\right)\ell,
	\end{aligned}
	\]
	with $\ell:=\left(2\left(\sum_{i=1}^{M_1}\sum_{n=1}^{\overline{N}_i} |Y_{i,n}| + 1\right)^{\sum_{i=1}^{M_1}\overline{N}_i}+1\right).$

	\medskip
	\noindent
	{\bf Step 13.} We want to bound error$_f$. Notice that
	\[
	\begin{aligned}
		\hbox{error}_f &\leq 2^{\frac{1}{q}}|D|^{\frac{1}{q}}\delta_f \E_x[\sigma_D] +  2^{1+\frac{1}{q}}|D|^{\frac{1}{q}} \Theta_{q,s} \left(\delta_f + \norm{f}_{L^{\infty}(D)}\right) \left(\frac{\E_x\left[\sigma_D\right]}{M_2^{1-\frac{1}{s}}} + \frac{\E_x\left[\sigma_D^2\right]^{\frac{1}{2}}}{M_1^{1-\frac{1}{s}}}\right)\\
		&\qquad + \frac{2^{1+\frac{1}{q}}\Theta_{q,s}}{M_1^{1-\frac{1}{s}}} \left(1 + K(\alpha,q)\right)^{\frac{1}{q}}\left(\frac{2-\widetilde{p}}{\widetilde{p}^2}\right)^{\frac{1}{q}}.
	\end{aligned}
	\]
	Consider now $M \in \N$ and let $M = M_1 = M_2$. Define the constant $\widetilde{C}_1$ as
	\[
	\widetilde{C}_1 = 2^{1+\frac{1}{q}}\Theta_{q,s}\left(|D|^{\frac{1}{q}} \left(1 + \norm{f}_{L^{\infty}(D)}\right)\left(\E_x\left[\sigma_D\right]+\E_x\left[\sigma_D^2\right]^{\frac{1}{2}}\right)+ \left(1 + K(\alpha,q)\right)^{\frac{1}{q}} \left(\frac{2-\widetilde{p}}{\widetilde{p}^{2}}\right)^{\frac{1}{q}}\right),
	\]
	and the constant $\widetilde{C}_2$ as
	\[
	\widetilde{C}_2 = 2^{\frac{1}{q}}|D|^{\frac{1}{q}} \E_x\left[\sigma_D\right].
	\]
	Therefore
	\begin{equation}\label{eq:error_f}
	\hbox{error}_f \leq \frac{\widetilde{C}_1}{M^{1-\frac{1}{s}}} + \widetilde{C}_2 \delta_f.
	\end{equation}
	In addition
	\[
	\sum_{i=1}^{M} \sum_{n=1}^{\overline{N}_i} |Y_{i,n}| \leq M \left( \hbox{error}_f + \E_x \left[\sum_{n=1}^{N} |Y_{i,n}|\right]\right) \leq M \left( \hbox{error}_f + K(\alpha,1) \frac{1}{\widetilde{p}} \right).
	\]
	Recall that $C_3 = K(\alpha,1) \frac{1}{\widetilde{p}}$. Then
	\[
	\begin{aligned}
	\sum_{i=1}^{M} \sum_{n=1}^{\overline{N}_i} |Y_{i,n}| &\leq M^{\frac{1}{s}}\widetilde{C}_1 +  M\left(\delta_f\widetilde{C}_2+C_3\right). \\
	&\leq M^{\frac 1s}\widetilde{C}_1 + M(\widetilde{C}_2 + C_3).
	\end{aligned}
	\]
	Recall that $C_4 = \frac{1}{\widetilde{p}}$, therefore
	\[
	\begin{aligned}
	\sum_{i=1}^{M} \overline{N}_i \leq M (\hbox{error}_f + \E_x[N]) &\leq M^{\frac{1}{s}} \widetilde{C}_1 +M \left(\delta_f\widetilde{C}_2+C_4\right)\\
	&\leq M^{\frac 1s}\widetilde{C}_1 + M(\widetilde{C}_2+C_4).
	\end{aligned}
	\]
	\medskip
	\noindent
	From the Step 12 and the estimates of this step it follows that
	\[
	\begin{aligned}
		&\left|E_{M_1}(x) - \left(\mathcal{R}(\Psi_{2,\widetilde{\varepsilon}})\right)(x)\right|\\
		& \leq \kappa_{d,\alpha} \left(\delta_{\Upsilon} + \delta_{\alpha} \left(\delta_f + \norm{f}_{L^{\infty}(D)}\right)\right) \left(M^{\frac{1}{s}-1}\widetilde{C}_1 + \widetilde{C}_2 + C_4\right)\\
		& \quad+ \kappa_{d,\alpha} \delta_{\dist} \left(\diam(D)^{\alpha}L_f + L_{\alpha}\left(\delta_f + \norm{f}_{L^{\infty}(D)}\right)\right) \left(M^{\frac{1}{s}}\widetilde{C}_1 + M(\widetilde{C}_2 + C_4)\right)\widetilde \ell,
	\end{aligned}
	\]
	where $\widetilde \ell :=\left(2\left(M^{\frac{1}{s}}\widetilde{C}_1 + M(\widetilde{C}_2 + C_3) + 1\right)^{M^{\frac{1}{s}}\widetilde{C}_1 + M(\widetilde{C}_2 + C_4)}+1\right).$

	\medskip
	\noindent
	{\bf Step 14.} Lemma \ref{lem:sacada2}, the inequality \eqref{eq:error_f}, Step 13 and Minkowski inequality ensure that
	\[
	\begin{aligned}
		&\left(\int_D \left|\E_x\left[\sum_{n=1}^{N}r_n^{\alpha} \kappa_{d,\alpha} \E^{(\mu)}\left[f\left(X_{\mathcal{I}(n-1)}+r_n\cdot\right)\right]\right] - \left(\mathcal{R}(\Psi_{2,\widetilde{\varepsilon}})\right)(x)\right|^q dx\right)^{\frac{1}{q}}\\
		&\leq \left(\int_D \left|\E_x\left[\sum_{n=1}^{N}r_n^{\alpha} \kappa_{d,\alpha} \E^{(\mu)}\left[f\left(X_{\mathcal{I}(n-1)}+r_n\cdot\right)\right]\right] - E_{M_1}(x)\right|^q dx\right)^{\frac{1}{q}} \\
		&~{} \quad + \left(\int_D \left|E_{M_1}(x)-\left(\mathcal{R}(\Psi_{2,\widetilde{\varepsilon}})\right)(x)\right|^q dx\right)^{\frac{1}{q}}\\
		&\leq \frac{\widetilde{C}_1}{M^{1-\frac{1}{s}}} + \widetilde{C}_2 \delta_f + |D|^{\frac{1}{q}}\kappa_{d,\alpha} \left(\delta_{\Upsilon} + \delta_{\alpha} \left(\delta_f + \norm{f}_{L^{\infty}(D)}\right)\right) \left(M^{\frac{1}{s}-1}\widetilde{C}_1 + \widetilde{C}_2 + C_4\right)\\
		&\quad +|D|^{\frac{1}{q}}\kappa_{d,\alpha} \delta_{\dist} \left(\diam(D)^{\alpha}L_f + L_{\alpha}\left(\delta_f + \norm{f}_{L^{\infty}(D)}\right)\right) \left(M^{\frac{1}{s}}\widetilde{C}_1 + M(\widetilde{C}_2 + C_4)\right)\widetilde{\ell}.
	\end{aligned}
	\]
	For $\widetilde{\varepsilon} \in (0,1)$, let $M \in \N$ large enough such that
	\[
	M = \left\lceil \left(\frac{5\widetilde{C}_1}{\widetilde{\varepsilon}}\right)^{\frac{s}{s-1}} \right\rceil,
	\]
	and from the choice of $M$ let $\delta_{\Upsilon} \in \left(0,\frac{1}{2}\right)$ $\delta_{\dist},\delta_{\alpha} \in (0,1)$ small enough such that
	\begin{align*}
		\delta_{\Upsilon} &= \frac{\widetilde{\varepsilon}}{5|D|^{\frac 1q} \kappa_{d,\alpha}} \left(M^{\frac{1}{s}-1}\widetilde{C}_1 + \widetilde{C}_2 + C_4\right)^{-1},\\
		\delta_{\alpha}&= \frac{\widetilde{\varepsilon}}{5|D|^{\frac 1q}\kappa_{d,\alpha}} \left(1 + \norm{f}_{L^{\infty}(D)}\right)^{-1} \left(M^{\frac{1}{s} - 1}\widetilde{C}_1 + \widetilde{C}_2 + C_4\right)^{-1},\\
		\delta_{\dist} &= \frac{\widetilde{\varepsilon}}{5|D|^{\frac 1q}\kappa_{d,\alpha}\widetilde{\ell}} \left(\diam(D)^{\alpha}L_f + L_{\alpha}\left(1 + \norm{f}_{L^{\infty}(D)}\right)\right)^{-1} \left(M^{\frac 1s} \widetilde{C}_1 + M(\widetilde{C}_2 + C_4)\right)^{-1}.
	\end{align*}	
	Finally we choose $\delta_{f} \in (0,1)$ small enough such that
	\[
	\delta_f = \frac{\widetilde{\varepsilon}}{5\widetilde{C}_2}.
	\]
	Therefore
	\[
	\left(\int_D \left|\E_x\left[\sum_{n=1}^{N}r_n^{\alpha} \kappa_{d,\alpha} \E^{(\mu)}\left[f\left(X_{\mathcal{I}(n-1)}+r_n\cdot\right)\right]\right] - \left(\mathcal{R}(\Psi_{2,\widetilde{\varepsilon}})\right)(x)\right|^q dx\right)^{\frac{1}{q}} \leq \widetilde{\varepsilon}.
	\]
	We conclude that for all $\widetilde{\varepsilon} \in (0,1)$ there exists $\Psi_{2,\widetilde{\varepsilon}} \in \textbf{N}$ that approximates \eqref{eq:7.1} with accuracy $\widetilde{\varepsilon}$.

	\subsection{Proof of Proposition \ref{Prop:6p1}: quantification of DNNs}\label{Sect:7p3} In this Section we will prove that $\Psi_{2,\widetilde{\varepsilon}}$ is a ReLu DNN such that overcomes the curse of dimensionality.
	
	\medskip
	
	\noindent
	{\bf Step 15.}  We now study the DNN $\Psi_{2,\widetilde{\varepsilon}}$ with the Definitions and Lemmas of Section \ref{Sect:3}. Let
	\[
	\beta_{\dist} = \mathcal{D}(\Phi_{\dist}) \quad \hbox{and} \quad H_{\dist} = \dim(\beta_{\dist} )-2.
	\]
	Recall from Step 12 in the proof of Proposition \ref{Prop:homo} that for all $i=1,...,M$
	\[
	\mathcal{D}(\Phi_{i,1}) = d\mathfrak{n}_{H_{\dist}+2} \boxplus \widetilde{\beta}_{\dist}, \qquad \dim(\mathcal{D}(\Phi_{i,1})) = H_{\dist} +2,
	\]
	where
	\[
	\widetilde{\beta}_{\dist} = \left(\beta_{\dist,0}, ...,\beta_{\dist,H_{\dist}},d\right) \in \N^{H_{\dist}+2},
	\]
	and for all $n=2,...,\overline{N}_i$
	\[
	\mathcal{D}(\Phi_{i,n}) = \overunderset{n}{m=1}{\odot}(d\mathfrak{n}_{H_{\dist}+2}\boxplus\widetilde{\beta}_{\dist}), \quad \dim(\mathcal{D}(\Phi_{i,n})) = (H_{\dist}+1)n+1,
	\]
	with
	\[
	\vertiii{\mathcal{D}(\Phi_{i,n})} \leq 2d + \vertiii{\mathcal{D}(\Phi_{\dist})}.
	\]
	Denote
	\[
	\beta_f = \mathcal{D}(\Phi_f) \quad \hbox{and} \quad H_f = \dim(\beta_f) - 2.
	\]
	Define $\widetilde{\Phi}_{i,j,n} \in \textbf{N}$ as follows:
	\[
	\mathcal{R}(\widetilde{\Phi}_{i,j,n}) = x + v_{i,j,n}\mathcal{R}(\Phi_{\dist})(x).
	\]
	As similar as in the case of $\Phi_{i,1}$, we have that $\widetilde{\Phi}_{i,j,n}$ is a ReLu DNN such that
	\[
	\mathcal{D}(\widetilde{\Phi}_{i,j,n}) = d\mathfrak{n}_{H_{\dist}+2} \boxplus \widetilde{\beta}_{\dist}, \quad \dim(\mathcal{D}(\widetilde{\Phi}_{i,j,n})) = H_{\dist} + 2.
	\]
	Moreover
	\[
	\vertiii{\mathcal{D}(\widetilde{\Phi}_{i,j,n})} \leq 2d + \vertiii{\mathcal{D}(\Phi_{dist})}.
	\]
	Using the DNN $\widetilde{\Phi}_{i,j,n}$ we have that
	\[
	\mathcal{R}(\Phi_{i,n-1}) + v_{i,j,n}\mathcal{R}(\Phi_r^{i,n}) = \mathcal{R}(\widetilde{\Phi}_{i,j,n}) \circ \mathcal{R}(\Phi_{i,n-1}).
	\]
	Therefore, by Lemma \ref{lem:DNN_comp} it follows that
	\[
	\mathcal{R}(\Phi_f) \circ \left(\mathcal{R}(\Phi_{i,n}) + v_{i,j,n} \mathcal{R}(\Phi_{r}^{i,n})\right) \in \mathcal{R} \left(\left\{\Phi \in \textbf{N}: \mathcal{D}(\Phi)= \beta_{f} \odot \left(\overunderset{n}{m=1}{\odot}\left(d\mathfrak{n}_{H_{\dist}+2}\boxplus\widetilde{\beta}_{\dist}\right)\right)\right\}\right).
	\]
	with $(H_{\dist}+1)n+H_{f} + 2$ the total number of layers.	Note this ReLu DNN is continuous from $D$ to $\R$. Let
	\[
	\beta_{\alpha} = \mathcal{D}(\Phi_{\alpha}), \quad \hbox{and} \quad H_{\alpha} = \dim(\beta_{\alpha}) -2.
	\]
	For $n=1,...,\overline{N}_i$ we compound the previous DNN with the identity with $(H_{\dist}+1)(\sum_{i=1}^{M_1}\overline{N}_i-n) + H_{\alpha} + 1$ layers to obtain a DNN $\widehat{\Phi}_{i,j,n} \in \textbf{N}$ such that
	\[
	\mathcal{R}(\widehat{\Phi}_{i,j,n}) = \mathcal{R}(\Phi_f) \circ \mathcal{R}(\widetilde{\Phi}_{i,j,n}) \circ \mathcal{R}(\Phi_{i,n}),
	\]
	with
	\[
	\mathcal{D}(\widehat{\Phi}_{i,j,n}) =\left(  \mathfrak{n}_{(H_{\dist}+1)(\sum_{i=1}^{M_1}\overline{N}_i-n) + H_{\alpha} + 1} \right)\odot \beta_{f} \odot \left(\overunderset{n}{m=1}{\odot}\left(d\mathfrak{n}_{H_{\dist}+2}\boxplus\widetilde{\beta}_{\dist}\right)\right),
	\]
	and
	\[
	\dim(\mathcal{D}(\widehat{\Phi}_{i,j,n})) = (H_{\dist}+1)\sum_{i=1}^{M_1} \overline{N}_i  + H_{\alpha} + H_{f} +2.
	\]
	Note from the definition of DNN $\Phi_{f}^{i,n}$ that
	\[
	\mathcal{R}(\Phi_{f}^{i,n}) = \sum_{j=1}^{M_2} \mathcal{R}(\widehat{\Phi}_{i,j,n}).
	\]
	Therefore, Lemma \ref{lem:DNN_sum} implies that $\Phi_{f}^{i,n}$ is a ReLu DNN with
	\[
	\mathcal{D}(\Phi_{f}^{i,n}) = \overunderset{M_2}{j=1}{\boxplus}\left( \mathfrak{n}_{(H_{\dist}+1)(\sum_{i=1}^{M_1}\overline{N}_i-n) +H_{\alpha}+ 1}\odot \beta_{f} \odot \left(\overunderset{n}{m=1}{\odot}\left(d\mathfrak{n}_{H_{\dist}+2}\boxplus\widetilde{\beta}_{\dist}\right)\right)\right),
	\]
	and
	\[
	\dim(\mathcal{D}(\Phi_f^{i,n})) = (H_{\dist}+1)\sum_{i=1}^{M_1} \overline{N}_i  +H_{\alpha}+ H_{f} + 2.
	\]
	Moreover
	\[
	\vertiii{\mathcal{D}(\Phi_{f}^{i,n})} \leq \sum_{j=1}^{M_2} \max\{\vertiii{\mathcal{D}(\Phi_f)},2d+\vertiii{\mathcal{D}(\Phi_{\dist})} \} = M_2 \max \{\vertiii{\mathcal{D}(\Phi_f)},2d+\vertiii{\mathcal{D}(\Phi_{\dist})} \}.
	\]
	On the other hand side, note that
	\[
	\mathcal{D}(\Phi_{r}^{i,n}) = \beta_{\dist} \odot \mathcal{D}(\Phi_{i,n-1}), \quad \dim(\mathcal{D}(\Phi_{r}^{i,n})) = (H_{\dist}+1)n+ 1,
	\]
	and
	\[
	\vertiii{\mathcal{D}(\Phi_{r}^{i,n})} \leq \max \{2d,\vertiii{\mathcal{D}(\Phi_{\dist})},2d+\vertiii{\mathcal{D}(\Phi_{\dist})}\}=2d + \vertiii{\mathcal{D}(\Phi_{\dist})}.
	\]
	Therefore by Lemma \ref{lem:DNN_comp}
	\[
	\mathcal{R}(\Phi_{\alpha}) \circ \mathcal{R}(\Phi_{r}^{i,n}) \in \mathcal{R}\left(\left\{\Phi \in \textbf{N}: \mathcal{D}(\Phi) = \beta_{\alpha} \odot \beta_{\dist} \odot \left(\overunderset{n}{m=1}{\odot}\left(d\mathfrak{n}_{H_{\dist}+2}\boxplus\widetilde{\beta}_{\dist}\right)\right)\right\}\right),
	\]
	with $(H_{\dist}+1)n + H_{\alpha} + 2$ number of layers. Like before, we compound the previous DNN with the identity with $(H_{\dist}+1)(\sum_{i=1}^{M_1} \overline{N}_i - n )  + H_{f} + 1$ to obtain, by Lemma \ref{lem:DNN_comp} a DNN $\widehat{\Phi}_{i,n} \in \textbf{N}$ such that
	\[
	\mathcal{R}(\widehat{\Phi}_{i,n}) = \mathcal{R}(\Phi_{\alpha}) \circ \mathcal{R}(\Phi_{r}^{i,n}),
	\]
	with
	\[
	\mathcal{D}(\widehat{\Phi}_{i,n}) = \mathfrak{n}_{(H_{\dist}+1)(\sum_{i=1}^{M_1}\overline{N}_i-n) + H_{f} + 1} \odot \beta_{\alpha} \odot \beta_{\dist} \odot \left(\overunderset{n}{m=1}{\odot}\left(d\mathfrak{n}_{H_{\dist}+2}\boxplus\widetilde{\beta}_{\dist}\right)\right),
	\]
	and
	\[
	\dim(\mathcal{D}(\widehat{\Phi}_{i,n})) = (H_{\dist}+1)\sum_{i=1}^{M_1} \overline{N}_i  +H_{\alpha}+ H_{f} + 2.
	\]
	Moreover
	\[
	\vertiii{\mathcal{D}(\widehat{\Phi}_{i,n})} \leq \max \{\vertiii{\mathcal{D}(\Phi_{\alpha})},2d + \vertiii{\mathcal{D}(\Phi_{\dist})}\}	.
	\]
	Define $H \in \N$ as
	\[
	H = (H_{\dist}+1)\sum_{i=1}^{M_1}\overline{N}_i + H_{\alpha} + H_f.
	\]
	We now realize a parallelization between the DNNs $\widehat{\Phi}_{i,n}$ and $\Phi_f^{i,n}$. By Lemma \ref{lem:DNN_para}, there exists a ReLu DNN $\overline{\Phi}_{i,n} \in \textbf{N}$ such that
	\[
	\mathcal{R}(\overline{\Phi}_{i,n}) = (\mathcal{R}(\widehat{\Phi}_{i,n}),\mathcal{R}(\Phi_{f}^{i,n})),
	\]
	with
	\[
	\mathcal{D}(\overline{\Phi}_{i,n}) = \mathcal{D}(\widehat{\Phi}_{i,n}) \boxplus \mathcal{D}(\Phi_{f}^{i,n}) + e_{H+2},
	\]
	and
	\[
	\dim(\mathcal{D}(\overline{\Phi}_{i,n})) = (H_{\dist}+1)\sum_{i=1}^{M_1} \overline{N}_i  +H_{\alpha}+ H_{f} + 2,
	\]
	where
	\[
	e_{H+2} = (0,...,0,1) \in \R^{H+2}.
	\]
	Moreover,
	\[
	\vertiii{\mathcal{D}(\overline{\Phi}_{i,n})} \leq \vertiii{\mathcal{D}(\widehat{\Phi}_{i,n})} + \vertiii{\mathcal{D}(\Phi_{f}^{i,n})},
	\]
	and thus
	\[
	\vertiii{\mathcal{D}(\overline{\Phi}_{i,n})} \leq \vertiii{\mathcal{D}(\Phi_{\alpha})} + M_2\vertiii{\mathcal{D}(\Phi_{f})} + (M_2 + 1)(2d+\vertiii{\mathcal{D}(\Phi_{\dist})}).
	\]
	Let
	\[
	\beta_{\Upsilon} = \mathcal{D}(\Upsilon), \quad \hbox{and} \quad H_{\Upsilon} = \dim(\beta_{\Upsilon}) + 2.
	\]
	Therefore, by Lemma \ref{lem:DNN_comp} it follows that
	\[
	\mathcal{D}(\Upsilon_{i,n}) = \beta_{\Upsilon} \odot \left(\mathcal{D}(\widehat{\Phi}_{i,n}) \boxplus \mathcal{D}(\Phi_{f}^{i,n}) + e_{H+2}\right),
	\]
	and
	\[
	\dim(\mathcal{D}(\Upsilon_{i,n})) =H+H_{\Upsilon} + 3.
	\]
	Moreover
	\[
	\begin{aligned}
	\vertiii{\mathcal{D}(\Upsilon_{i,n})} &\leq \max\{\vertiii{\mathcal{D}(\Upsilon)},\vertiii{\mathcal{D}(\overline{\Phi}_{i,n})}\}\\
	&\leq \vertiii{\mathcal{D}(\Upsilon)} + \vertiii{\mathcal{D}(\Phi_{\alpha})} + M_2\vertiii{\mathcal{D}(\Phi_{f})} + (M_2 + 1)(2d+\vertiii{\mathcal{D}(\Phi_{\dist})}).
	\end{aligned}
	\]
	Finally, from Lemma \ref{lem:DNN_sum} it follows that
	\[
	\mathcal{D}(\Psi_{2,\widetilde{\varepsilon}}) = \overunderset{M_1}{i=1}{\boxplus}\overunderset{\overline{N}_i}{n=1}{\boxplus} \left(\beta_{\Upsilon} \odot \left(\mathcal{D}(\widehat{\Phi}_{i,n}) \boxplus \mathcal{D}(\Phi_{f}^{i,n}) + e_{H+2}\right)\right),
	\]
	and
	\[
	\dim(\mathcal{D}(\Psi_{2,\widetilde{\varepsilon}})) = H+H_{\Upsilon} + 3.
	\]
	Moreover
	\begin{equation}\label{eq:psi2}
	\vertiii{\mathcal{D}(\Psi_{2,\widetilde{\varepsilon}})} \leq \Big(\vertiii{\mathcal{D}(\Upsilon)} + \vertiii{\mathcal{D}(\Phi_{\alpha})} + M_2\vertiii{\mathcal{D}(\Phi_{f})} + (M_2 + 1)(2d+\vertiii{\mathcal{D}(\Phi_{\dist})})\Big) \sum_{i=1}^{M_1} \overline{N}_i .
	\end{equation}
	Notice from the definition of $\widetilde{C}_1$ and $\widetilde{C}_2$ that both constants are multiple of $|D|^{\frac 1q}$. Therefore, by choice of $M_1$ and $M_2$ we have
	\begin{equation}\label{eq:Mfinalf}
	M_2 = M_1 \leq B_1 |D|^{\frac{s}{q(s-1)}}\widetilde{\varepsilon}^{-\frac{s}{s-1}}, 
	\end{equation}
	where $B_1>0$ is a generic constant. The choice of $\delta_f$ and the constant $\widetilde{C}_2$ implies that
	\begin{equation}\label{eq:deltaffinal}
	\delta_{f}^{-a} \leq B_2|D|^{\frac aq} \widetilde{\varepsilon}^{-a},
	\end{equation}
	for some constant $B_2 >0$. From the choice of $\delta_{\Upsilon}$, $\delta_{\alpha}$ and \eqref{eq:Mfinalf} it follows that
	\begin{equation}\label{eq:deltaupsfinal}
	\log(\delta_{\Upsilon}^{-1}) \leq \delta_{\Upsilon}^{-1} \leq B_3|D|^{\frac 2q} \widetilde{\varepsilon}^{-1},
	\end{equation}
	and
	\begin{equation} \label{eq:deltaalpfinal}
	\delta_{\alpha}^{-a} \leq B_4 |D|^{\frac{2a}q} \widetilde{\varepsilon}^{-a}.
	\end{equation}
	where $B_3,B_4>0$ are a generic constant, and from the choice of $\delta_{\dist}$ and properties of Logarithm function we have
	\[
	\begin{aligned}
		\log(\delta_{\dist}^{-1}) &\leq 5|D|^{\frac 1q}\kappa_{d,\alpha}\left(\diam(D)^{\alpha}L_f + L_{\alpha}\left(1 + \norm{f}_{L^{\infty}(D)}\right)\right)\left(M^{\frac 1s} \widetilde{C}_1 + M(\widetilde{C}_2 + C_4)\right)\widetilde{\varepsilon}^{-1}\\
		&\qquad+ 4\left(M^{\frac 1s} \widetilde{C}_1 + M(\widetilde{C}_2 + C_4)\right)\left(M^{\frac 1s} \widetilde{C}_1 + M(\widetilde{C}_2 + C_3)+1\right).
	\end{aligned}
	\]
	Therefore from \eqref{eq:Mfinalf}
	\begin{equation}\label{eq:deltadistfinalf}
	\lceil \log(\delta_{\dist}^{-1})\rceil^{a} \leq B_5 |D|^{\frac{2a}q\left(1+\frac{s}{s-1}\right)} \widetilde{\varepsilon}^{-a-\frac{2as}{s-1}},
	\end{equation}
	for some $B_5 > 0$ generic. Note also that
	\begin{equation}\label{eq:Nfinalf}
	\sum_{i=1}^{M_1}\overline{N}_i \leq B_6 |D|^{\frac{1}{q} \left(1 + \frac{s}{s-1}\right)}\widetilde{\varepsilon}^{-\frac{s}{s-1}},
	\end{equation}
	with $B_6 >0$. Finally from Assumptions \ref{Sup:D}, \ref{Sup:f} and inequalities \eqref{eq:psi2}, $\eqref{eq:Nfinalf}$ we got
	\[
	\begin{aligned}
	&\vertiii{\mathcal{D}(\Psi_{2,\widetilde{\varepsilon}})} \\
	&\leq B_6 |D|^{\frac{1}{q} \left(1 + \frac{s}{s-1}\right)}\widetilde{\varepsilon}^{-\frac{s}{s-1}}\Big(\log(\delta_{\Upsilon}^{-1})+\delta_{\alpha}^{-a}+M_2Bd^b\delta_f^{-a} + (M_2 + 1)(2d + Bd^b\lceil\log(\delta_{\dist}^{-1})\rceil^{a})\Big).
	\end{aligned}
	\]
	Finally, from inequalities \eqref{eq:Mfinalf}, \eqref{eq:deltaffinal}, \eqref{eq:deltaupsfinal}, \eqref{eq:deltaalpfinal} and \eqref{eq:deltadistfinalf} we conclude that there exists $\widetilde{B}>0$ such that
	\[
	\vertiii{\mathcal{D}(\Psi_{2,\widetilde{\varepsilon}})} \leq \widetilde{B} |D|^{\frac{1}{q}\left(1 +2a+\frac{2s}{s-1}(1+a)\right)}d^b\widetilde{\varepsilon}^{-a-\frac{2s}{s-1}(1+a)}.
	\]
	This completes the proof of Proposition \ref{Prop:6p1}.

\section{Proof of the Main Result}\label{Sect:8}

This final section is devoted to the proof of Theorem \ref{Main}. Gathering Propositions \ref{Prop:homo} and \ref{Prop:6p1}, Theorem \ref{Main} is finally proved.\\

\noindent
{\bf Step 1.}
Let $\alpha \in (1,2)$, $p,s \in (1,\alpha)$ such that $s< \frac{\alpha}{p}$ and $q \in \left[s,\frac{\alpha}{p}\right)$. Let Assumptions \eqref{Hg0} and \eqref{Hf0} be satisfied. Recall from Theorem \ref{teo:sol} and Lemma \ref{lem:sol2} that the solution $u$ of \eqref{eq:1.1} takes the form of \eqref{eq:solfinal}, namely
\[
u(x) = \E_x \left[g\left(X_{\mathcal{I}(N)}\right)\right] + \E_x \left[ \sum_{n=1}^{N} r_n^{\alpha} \kappa_{d,\alpha} \E^{(\mu)} \left[f\left(X_{\mathcal{I}(n-1)}+r_n \cdot\right)\right]\right], \qquad x \in D.
\]
From Propositions \ref{Prop:homo} and \ref{Prop:6p1} for all $\varepsilon, \widetilde{\varepsilon} \in (0,1)$ there exist ReLu DNNs $\Psi_{1,\varepsilon}$ and $\Psi_{2,\widetilde{\varepsilon}}$ that satisfy \eqref{eq:2.3} and \eqref{eq:prop_f}.
For the right approximation of the ReLu DNNs, $\delta_{\dist}$ and $M$ will be defined as
\[
\delta_{\dist} \leq \min\{\ell_1, \ell_2\}, \qquad M \geq \max \left\{\left\lceil\left(\frac{5C_1}{\varepsilon}\right)^{\frac{s}{s-1}}\right\rceil, \left\lceil\left(\frac{5\widetilde{C}_1}{\widetilde{\varepsilon}}\right)^{\frac{s}{s-1}}\right\rceil\right\},
\]
where
\[
\ell_1 := \frac{\varepsilon}{5|D|^{\frac{1}{q}}L_g} \left( 1 + M^{\frac{1}{s}}C_1 + M(C_2 + C_3)\right)^{-\left(M^{\frac{1}{s}}C_1 + M(C_2 + C_4)\right)},
\]
and
\[
\ell_2 := \frac{\widetilde{\varepsilon}}{5|D|^{\frac{1}{q}}\widetilde{\ell}} \left(\diam(D)^{\alpha}L_f + L_{\alpha}\left(1+\norm{f}_{L^{\infty}(D)}\right)\right)^{-1} \left(M^{\frac{1}{s}}\widetilde{C}_1 + M(\widetilde{C}_2 + C_4)\right)^{-1}.
\]
Recall that the constants in $\ell_1$ and $\ell_2$ are defined in Propositions \ref{Prop:homo} and \ref{Prop:6p1}.  Let $\epsilon \in (0,1)$ and define the ReLu DNN $\Psi_{\epsilon}$ that satisfies for all $x \in D$
\[
\left(\mathcal{R}(\Psi_{\epsilon})\right)(x) = \left(\mathcal{R}(\Psi_{1,\varepsilon})\right)(x) + \left(\mathcal{R}(\Psi_{2,\widetilde{\varepsilon}})\right)(x),
\]
where $\varepsilon = \widetilde{\varepsilon} = \frac{\epsilon}{2}$. From Minkowski inequality one has
\[
\begin{aligned}
	&\left(\int_D \left|u(x) - \left(\mathcal{R}(\Psi_{\epsilon})\right)(x)\right|^q dx\right)^{\frac{1}{q}} \\
	&\qquad\leq \left(\int_D \left|\E_x \left[g\left(X_{\mathcal{I}(N)}\right)\right] - \left(\mathcal{R}(\Psi_{1,\varepsilon})\right)(x)\right|^q dx\right)^{\frac{1}{q}}\\
	&\qquad \quad + \left(\int_D \left| \E_x \left[\sum_{n=1}^{N} r_n^{\alpha} \kappa_{d,\alpha} \E^{(\mu)}\left[f(X_{\mathcal{I}(n-1)}+r_n \cdot)\right]\right] - \left(\mathcal{R}(\Psi_{2,\widetilde{\varepsilon}})\right)\right|^q dx\right)^{\frac{1}{q}}\\
	&\qquad\leq \frac{\epsilon}{2} + \frac{\epsilon}{2} = \epsilon.
\end{aligned}
\]

\medskip

\noindent
{\bf Step 2} We now study the ReLu DNN $\Psi_{\epsilon}$. For $i=1,...,M$ Let $\overline{N}_{i,1}, \overline{N}_{i,2}$ the random variables $\overline{N}_i$ found in the Propositions \ref{Prop:homo} and \ref{Prop:6p1}, respectively. Recall that the DNN $\Psi_{1,\varepsilon}$ satisfy
\[
\mathcal{D}(\Psi_{1,\varepsilon}) = \overset{M}{\underset{i=1}{\boxplus}} \left(\mathfrak{n}_{H_i+2} \odot \beta_g \odot \left(\overunderset{\overline{N}_{i,1}}{m=1}{\odot}(d\mathfrak{n}_{H_{\dist}+2}\boxplus \widetilde{\beta}_{\dist})\right)\right),
\]
where $H_{i}$ is defined as
\[
 H_{i} = (H_{\dist}+1)\left(\sum_{j=1}^{M} \overline{N}_{j,1} - \overline{N}_{i,1}\right) - 1.
\]
and
\[
\dim(\mathcal{D}(\Psi_{1,\varepsilon})) = (H_{\dist}+1)\sum_{i=1}^M \overline{N}_{i,1} + H_g + 2.
\]
Recall also that $\Psi_{2,\widetilde{\varepsilon}}$ satisfy
\[
\mathcal{D}(\Psi_{2,\widetilde{\varepsilon}}) = \overunderset{M_1}{i=1}{\boxplus}\overunderset{\overline{N}_{i,2}}{n=1}{\boxplus} \left(\beta_{\Upsilon} \odot \left(\mathcal{D}(\widehat{\Phi}_{i,n}) \boxplus \mathcal{D}(\Phi_{f}^{i,n}) + e_{H+2}\right)\right),
\]
where $e_{H+2}=(0,...,0,1) \in \R^{H+2}$ with $H$ defined as
\[
H = (H_{\dist}+1)\sum_{i=1}^{M_1}\overline{N}_{i,2} + H_{\alpha} + H_f,
\]
and
\[
\dim(\mathcal{D}(\Psi_{2,\widetilde{\varepsilon}})) = H+H_{\Upsilon} + 3.
\]
To use Lemma \ref{lem:DNN_sum}, the ReLu DNNs $\Psi_{1,\varepsilon}$ and $\Psi_{2,\widetilde{\varepsilon}}$ must have the same number of layers. We compound each DNN by a suitable ReLu DNN that represents the identity function. Define then the ReLu DNN $\overline{\Psi}_{1,\varepsilon}$ that satisfy $\mathcal{R}(\overline{\Psi}_{1,\varepsilon})=\mathcal{R}(\Psi_{1,\varepsilon})$ with
\[
\mathcal{D}(\overline{\Psi}_{1,\varepsilon}) = \mathfrak{n}_{H+H_{\Upsilon}+3} \odot \mathcal{D}(\Psi_{1,\varepsilon}),
\]
and Define the ReLu DNN $\overline{\Psi}_{2,\widetilde{\varepsilon}}$ that satisfy $\mathcal{R}(\overline{\Psi}_{2,\widetilde{\varepsilon}}) = \mathcal{R}(\Psi_{2,\widetilde{\varepsilon}})$ with
\[
\mathcal{D}(\overline{\Psi}_{2,\widetilde{\varepsilon}}) = \mathfrak{n}_{(H_{\dist}+1)\sum_{i=1}^{M}\overline{N}_{i,1}+H_g+2} \odot \mathcal{D}(\Psi_{2,\widetilde{\varepsilon}}),
\]
Therefore we have that $\dim(\mathcal{D}(\overline{\Psi}_{1,\varepsilon}))=\dim(\mathcal{D}(\overline{\Psi}_{2,\widetilde{\varepsilon}}))$. Moreover
\[
	\dim(\mathcal{D}(\overline{\Psi}_{1,\varepsilon}))= (H_{\dist}+1)\sum_{i=1}^{M}(\overline{N}_{i,1} + \overline{N}_{i,2}) +H_{\alpha}+ H_{f}+ H_g +H_{\Upsilon} + 4.
\]
Therefore we can use Lemma \ref{lem:DNN_sum} to obtain that $\Psi_{\epsilon}$ is a ReLu DNN such that
\[
\mathcal{D}(\Psi_{\epsilon}) = \mathcal{D}(\overline{\Psi}_{1,\varepsilon}) \boxplus \mathcal{D}(\overline{\Psi}_{2,\widetilde{\varepsilon}}),
\]
and
\[
\dim(\mathcal{D}(\Psi_{\epsilon})) = (H_{\dist}+1)\sum_{i=1}^{M}(\overline{N}_{i,1} + \overline{N}_{i,2}) +H_{\alpha}+ H_{f}+ H_g +H_{\Upsilon} + 4.
\]
Moreover
\[
\vertiii{\mathcal{D}(\Psi_{\epsilon})} \leq \vertiii{\mathcal{D}(\Psi_{1,\varepsilon})} + \vertiii{\mathcal{D}(\Psi_{2,\widetilde{\varepsilon}})}.
\]
Recall from Propositions \ref{Prop:homo} and \ref{Prop:6p1} that there exists $\widetilde{B}>0$ such that
\[
\vertiii{\mathcal{D}(\Psi_{1,\varepsilon})} \leq \widetilde{B}|D|^{\frac{1}{q}\left(2a + ap+\frac{s}{s-1}(1+2a+ap)\right)} d^{b+2ap+2ap^2 + \frac{ps}{s-1}(1+2a + ap) }\varepsilon^{-a-\frac{s}{s-1}(1+2a+ap)}.
\]
and
\[
\vertiii{\mathcal{D}(\Psi_{2,\widetilde{\varepsilon}})} \leq \widetilde{B} |D|^{\frac{1}{q}\left(1 +2a+\frac{2s}{s-1}(1+a)\right)}d^b\widetilde{\varepsilon}^{-a-\frac{2s}{s-1}(1+a)}.
\]
Therefore
\[
\vertiii{\mathcal{D}(\Psi_{\epsilon})} \leq \widehat{B}|D|^{\frac{1}{q}\left(1+2a+ap+\frac{s}{s-1}(2+2a+ap)\right)}d^{b+2ap+ap^2+\frac{ps}{s-1}(1+2a + ap)} \epsilon^{-a-\frac{s}{s-1}(2+2a+ap)},
\]
where $\widehat{B}>0$ is a generic constant. Theorem \ref{Main} can be concluded choosing $\eta>0$ as the maximum between $\frac{1}{q}\left(1+2a+ap+\frac{s}{s-1}(2+2a+ap)\right)$, $b+2ap+ap^2+\frac{ps}{s-1}(1+2a + ap)$ and $\frac{s}{s-1}(2+2a+ap)$.


\begin{thebibliography}{99}

\bibitem{Applebaum} D. Applebaum. \emph{Lévy Processes and Stochastic Calculus, 2nd ed.} Cambridge Studies in Advanced Mathematics. doi:10.1017/CBO9780511809781, 2009.

\bibitem{Acosta} G. Acosta and J. P. Borthagaray. \emph{A fractional Laplace equation: regularity of solutions and finite element approximations}. SIAM Journal on Numerical Analysis, 55(2), 472-495, 2017.

\bibitem{Beck1} C. Beck, S. Becker, P. Grohs, N. Jaafari and A. Jentzen. \emph{Solving the Kolmogorov PDE by Means of Deep Learning.} ournal of Scientific Computing, vol. 88, no. 3, July 2021. \url{https://doi.org/10.1007/s10915-021-01590-0}.

\bibitem{Beck2} C. Beck and A. Jentzen. \emph{Machine learning approximation algorithms for high-dimensional fully nonlinear partial differential equations and second-order backward stochastic differential equations}. Journal of Nonlinear Science, 29(4), 1563-1619, 2019.

\bibitem{Berner1} J. Berner, P. Grohs and A. Jentzen. \emph{Analysis of the generalization error: Empirical risk minimization over deep artificial neural networks overcomes the curse of dimensionality in the numerical approximation of Black--Scholes partial differential equations.} SIAM Journal on Mathematics of Data Science, 2(3), 631-657, 2020.

\bibitem{Bertoin} J. Bertoin. \emph{Lévy Processes}. Volume 121 of Cambridge Tracts in Mathematics. Cambridge University Press, Cambridge. ISBN 0-521-56243-0, 1996.

\bibitem{Blumenthal1} R. Blumenthal, R. Getoor and D. Ray. \emph{On the Distribution of First Hits for the Symmetric Stable Processes}. Transactions of the American Mathematical Society, 99(3), 540–554, 1961. \url{https://doi.org/10.2307/1993561}.

\bibitem{Bonito} A. Bonito, J. P. Borthagaray, R. H. Nochetto, E. Otárola and A. J. Salgado. \emph{Numerical methods for fractional diffusion}. Computing and Visualization in Science, 19(5), 19-46, 2018.

\bibitem{CS} L. Caffarelli, and L. Silvestre, \emph{An extension problem related to the fractional Laplacian}, Comm. Partial Differential Equations 32 (2007), no. 7-9, 1245--1260.

\bibitem{Hutz2} S. Cox, M. Hutzenthaler, A. Jentzen, J. van Neerven and T. Welti.  \emph{Convergence in Hölder norms with applications to Monte Carlo methods in infinite dimensions}. IMA Journal of Numerical Analysis, 41(1), 493-548, 2021.

\bibitem{beta} J, Dutka. \emph{The incomplete Beta function—a historical profile}. Archive for history of exact sciences, 11-29, 1981.

\bibitem{Elb} D. Elbrächter, D. Perekrestenko, P. Grohs, and H. Bölcskei. \emph{Deep Neural Network Approximation Theory}. IEEE Transactions on Information Theory, vol. 67, no. 5, May 2021, pp. 2581–623. \url{https://doi.org/10.1109/tit.2021.3062161}.

\bibitem{Getoor} R. Getoor. \emph{First Passage Times for Symmetric Stable Processes in Space}. Transactions of the American Mathematical Society, 101(1), 75–90, 1961. \url{https://doi.org/10.2307/1993412}

\bibitem{Gonnon} L. Gonon and C. Schwab. \emph{Deep ReLU neural networks overcome the curse of dimensionality for partial integrodifferential equations}. arXiv preprint arXiv:2102.11707, 2021.

\bibitem{Grohs} P. Grohs and L. Herrmann. \emph{Deep neural network approximation for high-dimensional elliptic PDEs with boundary conditions}. IMA Journal of Numerical Analysis, 2021. \url{https://doi.org/10.1093/imanum/drab031}.

\bibitem{Grohs2} P. Grohs, F. Hornung, A. Jentzen, and P. Von Wurstemberger. \emph{A proof that artificial neural networks overcome the curse of dimensionality in the numerical approximation of Black-Scholes partial differential equations}. arXiv preprint arXiv:1809.02362,  2018.

\bibitem{Grohs3} P. Grohs, A. Jentzen and D. Salimova. \emph{Deep neural network approximations for Monte Carlo algorithms.} arXiv preprint arXiv:1908.10828, 2019.

\bibitem{Gulian} M. Gulian, and G. Pang. \emph{Stochastic solution of elliptic and parabolic boundary value problems for the spectral fractional Laplacian.} arXiv preprint arXiv:1812.01206, 2018.

\bibitem{Hornik} Kurt Hornik et al., \emph{Approximation Capabilities of Multilayer Feedforward Networks}, Neural Networks, Vol. 4, pp. 251-257. 1991.

\bibitem{Hutz} M. Hutzenthaler, A. Jentzen, Thomas Kruse, Tuan Anh Nguyen, \emph{A proof that rectified deep neural networks overcome the curse of dimensionality in the numerical approximation of semilinear heat equations}, Partial Differ. Equ. Appl. 1, 10 (2020).

\bibitem{Jentz1} A. Jentzen, D. Salimova, and T. Welti. \emph{A proof that deep artificial neural networks overcome the curse of dimensionality in the numerical approximation of Kolmogorov partial differential equations with constant diffusion and nonlinear drift coefficients.} Communications in Mathematical Sciences, vol. 19, no. 5, 2021, pp. 1167–205. \url{https://doi.org/10.4310/cms.2021.v19.n5.a1}.


\bibitem{AK1} A. E Kyprianou, A. Osojnik and T. Shardlow, \emph{Unbiased ‘walk-on-spheres’ Monte Carlo methods for the fractional Laplacian}, IMA Journal of Numerical Analysis, Volume 38, Issue 3, July 2018, pp 1550–1578, \url{https://doi.org/10.1093/imanum/drx042}.

\bibitem{AK2} A. Kyprianou and J. Pardo. \emph{Stable Lévy Processes via Lamperti-Type Representations} (Institute of Mathematical Statistics Monographs). Cambridge: Cambridge University Press, 2021.

\bibitem{ProbBan} M. Ledoux and M. Talagrand. \emph{Probability in Banach Spaces: isoperimetry and processes} (Vol. 23). Springer Science \& Business Media, 1991.

\bibitem{Lischke1} A. Lischke, G. Pang, M. Gulian, F. Song, C. Glusa, X. Zheng, ... and G. E. Karniadakis. \emph{What is the fractional Laplacian?}. arXiv preprint arXiv:1801.09767, 2018.







\bibitem{Schilling} R. Schilling. \emph{An Introduction to L\'evy and Feller Processes}. Advanced Courses in Mathematics-CRM Barcelona 2014. arXiv preprint arXiv:1603.00251, 2016.

\bibitem{ultimo} C. Sheng, B. Su and C. Xu. \emph{Efficient Monte Carlo method for integral fractional Laplacian in multiple dimensions}. arXiv preprint, 	arXiv:2204.08860, 2022.

\bibitem{surface}  D. J. Smith, and M. K. Vamanamurthy. \emph{How Small Is a Unit Ball?}. Mathematics Magazine, 62(2), 101–107, 1989. \url{https://doi.org/10.2307/2690391}.

\bibitem{WE1}  W. E, J. Han and A. Jentzen. \emph{Deep learning-based numerical methods for high-dimensional parabolic partial differential equations and backward stochastic differential equations.} Commun. Math. Stat. 5(4), 349–380, 2017.

\bibitem{WE2}  W.E, J. Han and A. Jentzen. \emph{Solving high-dimensional partial differential equations using deep learning.} Proc. Natl. Acad. Sci. 115(34), 8505–8510, 2018.


\end{thebibliography}
\end{document}